\newcommand{\average}[1]{\ensuremath{\langle#1\rangle} }
\newtheorem{thm}{\bfseries Theorem}[section]
\newtheorem{lem}[thm]{\bfseries Lemma} 
\newtheorem{prop}[thm]{\bfseries Proposition} 
\newtheorem{cl}[thm]{\bfseries Claim}
\newtheorem{exmp}[thm]{\bfseries Example}
\begin{document}
\title{On a general framework for network representability\\in discrete optimization\footnotemark[1]}
\author{Yuni Iwamasa\footnotemark[2]}
\footnotetext[1]{A preliminary version of this paper has appeared in the proceedings of the 4th International Symposium on Combinatorial Optimization (ISCO 2016).}
\footnotetext[2]{Department of Mathematical Informatics,
Graduate School of Information Science and Technology,
University of Tokyo, Tokyo, 113-8656, Japan.\\
Email: \texttt{yuni\_iwamasa@mist.i.u-tokyo.ac.jp}
}
\maketitle
\begin{abstract}
	In discrete optimization, representing an objective function as an $s$-$t$ cut function of a network is a basic technique to design an efficient minimization algorithm.
	A network representable function can be minimized by computing a minimum $s$-$t$ cut of a directed network, which is an efficiently solvable problem.
	Hence it is natural to ask what functions are network representable.
	In the case of pseudo Boolean functions (functions on $\{0,1\}^n$), it is known that any submodular function on $\{0,1\}^3$ is network representable.
	\v{Z}ivn{\'y}--Cohen--Jeavons showed by using the theory of expressive power that a certain submodular function on $\{0,1\}^4$ is not network representable.
	
	In this paper, we introduce a general framework for the network representability of functions on $D^n$, where $D$ is an arbitrary finite set.
	We completely characterize network representable functions on $\{0,1\}^n$ in our new definition.
	We can apply the expressive power theory to the network representability in the proposed definition.
	We prove that some ternary bisubmodular function and some binary $k$-submodular function are not network representable.
\end{abstract}
\begin{quote}
	{\bf Keywords: }
	network representability, valued constraint satisfaction problem, expressive power, $k$-submodular function
\end{quote}

\section{Introduction}
The minimum $s$-$t$ cut problem is one of the most fundamental and efficiently solvable problems in discrete optimization.
Thus, representing a given objective function by the $s$-$t$ cut function of some network leads to an efficient minimization algorithm.
This idea goes back to a classical paper by Iv\u{a}nescu~\cite{OR/I65} in 60's,
and revived in the context of computer vision in the late 80's.
Efficient image denoising and other segmentation algorithms are designed via representing the energy functions as $s$-$t$ cut functions.
Such a technique {\it (Graph Cut)} is now popular in computer vision; see~\cite{JRSSSB/GPS89,TPAMI/KZ04} and references therein.
Also an $s$-$t$ cut function is a representative example of {\it submodular functions}.
Mathematical modeling and learning algorithms utilizing submodularity are now intensively studied in the literature of machine learning; see e.g.~\cite{FTML/B13}.
Hence efficient minimization algorithms of submodular functions are of great importance,
but it is practically impossible to minimize very large submodular functions in machine learning by using generic polynomial time submodular minimization algorithms such as~\cite{book/GLS88,JACM/IFF01,FOCS/LSW15,JCTSB/S00}.
Thus, understanding efficiently minimizable subclasses of submodular functions and developing effective uses of these subclasses for practical problems are important issues.

What (submodular) functions are efficiently minimizable via a network representation and minimum cut computation?
Iv\u{a}nescu~\cite{OR/I65} showed that all submodular functions on $\{0,1\}^2$ are network representable,
and Billionnet--Minoux~\cite{DAM/BM85} showed that the same holds for all submodular functions on $\{0,1\}^3$.
It is meaningful to investigate network representability of functions having few variables,
since they can be used as building blocks for large network representations.
Kolmogorov--Zabih~\cite{TPAMI/KZ04} introduced a formal definition of the network representability,
and showed that network representable functions are necessarily submodular.
Are all submodular functions network representable?
This question was negatively answered by \v{Z}ivn{\'y}--Cohen--Jeavons~\cite{DAM/ZCJ09}.
They showed that a certain submodular function on $\{0,1\}^4$ is {\it not} network representable.
In proving the non-existence of a network representation, they utilized the theory of {\it expressive power} developed in the context of {\it valued constraint satisfaction problems}.

In this paper, we initiate a network representation theory for functions on $D^n$, where $D$ is a general finite set beyond $\{0,1\}$.
Our primary motivation is to give a theoretical basis for applying network flow methods to multilabel assignments, such as the Potts model.
Our main target as well as our starting point is network representations of $k$-submodular functions.
{\it $k$-submodular functions}~\cite{ISCO/HK12} have recently gained attention as a promising generalization of submodular functions on $\{0,1,2,\dots,k\}^n$~\cite{ICCV/GK13,DO/H15,SICOMP/IWY16}.
Iwata--Wahlstr{\"o}m--Yoshida~\cite{SICOMP/IWY16} considered a network representation of $k$-submodular functions for the design of FPT algorithms.
Independently, Ishii~\cite{master/I14_E} considered another representation,
and showed that all 2-submodular (bisubmodular) functions on $\{0,-1,1\}^2$ are network representable.
In this paper, by generalizing and abstracting their approaches,
we present a unified framework for network representations of functions on $D^n$.
Features of the proposed framework as well as results of this paper are summarized as follows:

\begin{itemize}
	\item In our network representation, to represent a function on $D^n$, each variable in $D$ is associated with several nodes.
	More specifically, three parameters $(k, \rho, \sigma)$ define one network representation.
	The previous network representations (by Kolmogorov--Zabih, Ishii, and Iwata--Wahlstr{\"o}m--Yoshida)
	can be viewed as our representations for special parameters.
	\item We completely characterize network representable functions on $\{0,1\}^n$ under our new definition;
	they are network representable in the previous sense or they are monotone (Theorems~\ref{thm:{0,1}main} and~\ref{thm:{0,1}eq};
	reformulated as Theorems~\ref{thm:{0,1}mainExpressive} and~\ref{thm:{0,1}eqExpressive}).
	The minimization problem of monotone functions is trivial.
	This means that it is sufficient only to consider the original network representability for functions on $\{0,1\}^n$.
	\item Our framework is compatible with the expressive power theory,
	which allows us to prove that a function cannot admit any network representation.
	\item
	As an application of the above,
	we provide a bisubmodular function on $\{0,-1,1\}^3$ and a $k$-submodular function on $\{0,1,2,\dots,k\}^2$ for any $k \geq 3$ which are {\it not} network representable for a natural parameter (Theorems~\ref{thm:3ary2sub} and~\ref{thm:2aryksub}; strengthened to Theorems~\ref{thm:3ary2subExpressive} and~\ref{thm:2aryksubExpressive}).
	This answers negatively an open problem raised by~\cite{SICOMP/IWY16}.
\end{itemize}

\paragraph{Organization.}
In Section~\ref{sec:preliminaries}, we introduce submodular functions, $s$-$t$ cut functions, and $k$-submodular functions.
We also introduce the network representation of submodular functions by Kolmogorov--Zabih~\cite{TPAMI/KZ04}.
Furthermore we explain concepts of expressive power and weighted polymorphisms,
which play key roles in proving the non-existence of a network representation.
In Section~\ref{sec:general framework}, we explain the previous network representations of $k$-submodular functions.
Then we introduce a framework for the network representability of functions on $D^n$,
and discuss its compatibility with the expressive power theory.
We also present our results on network representability in our framework.
In Section~\ref{sec:remarks}, we present several remarks about {\it (submodular) representability} and {\it extended pp-interpretation}.
In Section~\ref{sec:proof}, we give proofs of statements in Section~\ref{sec:general framework}.

\paragraph{Notation.}
Let $\mathbf{Q}$ and $\mathbf{Q}_+$ denote the sets of rationals and nonnegative rationals, respectively.
In this paper, functions can take the infinite value $+ \infty$, where $a < + \infty$ and $a + \infty = + \infty$ for $a \in \mathbf{Q}$.
Let $\overline{\mathbf{Q}} := \mathbf{Q} \cup \{+ \infty\}$.
For a function $f : D^n \rightarrow \overline{\mathbf{Q}}$, let ${\rm dom}\ f := \{ x \in D^n \mid f(x) < + \infty \}$.
For a positive integer $k$, let $[k] := \{1,2,\dots,k\}$, and $[0, k] := [k] \cup \{0\}$.
By a (directed) network $(V,A;c)$, we mean a directed graph $(V,A)$ endowed with rational nonnegative edge capacity $c : A \rightarrow \mathbf{Q}_+ \cup \{+\infty\}$.
A subset $X \subseteq V$ is also regarded as a characteristic function $X : V \rightarrow \{0,1\}$ defined by $X(i) := 1$ for $i \in X$ and $X(i) := 0$ for $i \not\in X$.
A function $\rho : F \rightarrow E$ with $F \supseteq E$ is called a {\it retraction} if it satisfies $\rho(a) = a$ for $a \in E$.
$\rho : F \rightarrow E$ is extended to $\rho : F^n \rightarrow E^n$ by defining $(\rho(x))_i := \rho(x_i)$ for $x \in F^n$ and $i \in [n]$.

\section{Preliminaries}\label{sec:preliminaries}
\subsection{Submodularity}
A {\it submodular function} is a function $f$ on $\{0,1\}^n$ satisfying the following inequalities
\begin{align*}
f(x) + f(y) \geq f(x \wedge y) + f(x \vee y) \qquad (x,y \in \{0,1\}^n),
\end{align*}
where binary operations $\wedge, \vee$ are defined by
\begin{align*}
(x \wedge y)_i := \begin{cases} 1 & \text{if $x_i = y_i = 1$,} \\ 0 & \text{if $x_i = 0$ or $y_i = 0$}, \end{cases} \qquad (x \vee y)_i := \begin{cases} 1 & \text{if $x_i = 1$ or $y_i = 1$,} \\ 0 & \text{if $x_i = y_i = 0$}, \end{cases}
\end{align*}
for $x = (x_1, x_2, \dots, x_n)$ and $y = (y_1, y_2, \dots, y_n)$.

The {\it $s$-$t$ cut function} of a network $G = (V \cup \{s,t\}, A; c)$ is a function $C$ on $2^V$ defined by
\begin{align*}
C(X) := \sum_{(u,v) \in A,\ u \in X \cup \{s\},\ v \not\in X \cup \{s\}} c(u,v) \qquad (X \subseteq V).
\end{align*}
For $X \subseteq V$, we call $X \cup \{s\}$ an {\it $s$-$t$ cut}.
An $s$-$t$ cut function is submodular.
In particular, an $s$-$t$ cut function can be efficiently minimized by a max-flow min-cut algorithm.
The current fastest one is $O(|V||A|)$-time algorithm by Orlin~\cite{STOC/O13}.

Let us introduce a class of functions on $[0,k]^n$, which also plays key roles in discrete optimization.
A {\it $k$-submodular function} is a function $f$ on $[0,k]^n$ satisfying the following inequalities
\begin{align*}
f(x) + f(y) \geq f(x \sqcap y) + f(x \sqcup y) \qquad (x, y \in [0,k]^n),
\end{align*}
where binary operations $\sqcap, \sqcup$ are defined by
\begin{align*}
(x \sqcap y)_i := \begin{cases}
x_i & \text{if $x_i = y_i$}, \\
0 & \text{if $x_i \neq y_i$},
\end{cases}
\quad (x \sqcup y)_i := \begin{cases}
\text{$y_i$ (resp. $x_i$)} & \text{if $x_i = 0$ (resp. $y_i = 0$)},\\
x_i & \text{if $x_i = y_i$}, \\
0 & \text{if $0 \neq x_i \neq y_i \neq 0$},
\end{cases}
\end{align*}
for $x = (x_1, x_2, \dots, x_n)$ and $y = (y_1, y_2, \dots, y_n)$.
A $k$-submodular function was introduced by Huber--Kolmogorov~\cite{ISCO/HK12} as an extension of submodular functions.
In $k = 1$, a $k$-submodular function is submodular,
and in $k = 2$, a $k$-submodular function is called {\it bisubmodular}, which domain is typically written as $\{0, -1, 1\}^n$ (see~\cite{book/Fujishige05}).
It is not known whether a $k$-submodular function can be minimized in polynomial time under the value oracle model for $k \geq 3$.
By contrast, Thapper--\v{Z}ivn\'y~\cite{FOCS/TZ12} proved that $k$-submodular functions can be minimized in polynomial time in the valued constraint satisfaction problem model for all $k$ (see~\cite{SICOMP/KTZ15} for the journal version).

In the following, we denote the set of all submodular functions having at most $n$ variables as $\Gamma_{{\rm sub,}n}$,
and let $\Gamma_{\rm sub} := \bigcup_n \Gamma_{{\rm sub,}n}$.
We also denote the set of all bisubmodular functions (resp. $k$-submodular functions) having at most $n$ variables as $\Gamma_{{\rm bisub,}n}$ (resp. $\Gamma_{{k{\rm sub,}}n}$).

\subsection{Network representation over $\{0,1\}$}
A function $f : \{0,1\}^n \rightarrow \overline{\mathbf{Q}}$ is said to be {\it network representable} if there exist a network $G = (V, A; c)$ and a constant $\kappa \in \mathbf{Q}$ satisfying the following:
\begin{itemize}
	\item $V \supseteq \{s,t,1,2,\dots, n\}$.
	\item For all $x = (x_1,x_2,\dots, x_n) \in \{0,1\}^n$, it holds that
	\begin{align*}
	f(x) = \min\{C(X) \mid \text{$X$: $s$-$t$ cut, } X(i) = x_i \text{ for } i \in [n] \} + \kappa.
	\end{align*}
\end{itemize}
This definition of the network representability was introduced by Kolmogorov--Zabih~\cite{TPAMI/KZ04}.
A network representable function has the following useful properties:
\begin{description}
	\item[Property 1:] A network representable function $f$ can be minimized via computing a minimum $s$-$t$ cut of a network representing $f$.
	\item[Property 2:] The sum of network representable functions $f_1, f_2$ is also network representable,
	and a network representation of $f_1+f_2$ can easily be constructed by combining networks representing $f_1, f_2$.
\end{description}
By the property 1, a network representable function can be minimized efficiently, provided a network representation is given.
By the property 2, it is easy to construct a network representation of a function $f$ if $f$ is the sum of ``smaller'' network representable functions.
Hence it is meaningful to investigate network representability of functions having few variables.
For example, by the fact that all submodular functions on $\{0,1\}^2$ are network representable,
we know that the sum of submodular functions on $\{0,1\}^2$ is also network representable.
This fact is particularly useful in computer vision applications.
Moreover, thanks to extra nodes,
a function obtained by a {\it partial minimization} (defined in Section~\ref{subsec:expressive}) of a network representable function is also network representable.

\subsection{Expressive power}\label{subsec:expressive}
It turned out that the above definition of network representability is suitably dealt with in the theory of {\it expressive power}, which has been developed in the literature of valued constraint satisfaction problems~\cite{book/Zivny12}.
The term ``expressive power'' has been used for various different meanings.
In this paper, ``expressive power'' is meant as a class of functions closed under several operations,
which is formally introduced as follows.

Let $D$ be a finite set, called a {\em domain}.
A {\em cost function} on $D$ is a function $f : D^r \rightarrow \overline{\mathbf{Q}}$ for some positive integer $r = r_f$, called the {\em arity} of $f$.
A set of cost functions on $D$ is called a {\em language} on $D$.
A cost function $f_{=} : D^2 \rightarrow \overline{\mathbf{Q}}$ defined by
$f_{=}(x,y) := 0$ if $x = y$ and $f_{=}(x,y) := +\infty$ if $x \neq y$,
is called the {\it weighted equality relation} on $D$.
A {\it weighted relational clone}~\cite{SICOMP/CCCJZ13} on $D$ is a language $\Gamma$ on $D$ such that
\begin{itemize}
	\item $f_{=} \in \Gamma$,
	\item for $\alpha \in \mathbf{Q}_+$, $\beta \in \mathbf{Q}$, and $f \in \Gamma$, it holds that $\alpha f + \beta \in \Gamma$,
	\item any addition of $f, g \in \Gamma$ belongs to $\Gamma$, and
	\item for $f \in \Gamma$, any partial minimization of $f$ belongs to $\Gamma$.
\end{itemize}
Here an {\it addition} of two cost functions $f, g$ is a cost function $h$ obtained by
\begin{align*}
h(x_1,\dots,x_n) = f(x_{s_1(1)}, \dots, x_{s_1(r_f)}) + g(x_{s_2(1)}, \dots, x_{s_2(r_g)}) \quad (x_1,\dots,x_n \in D)
\end{align*}
for some $s_1 : [r_f] \rightarrow [n]$ and $s_2 : [r_g] \rightarrow [n]$.
A {\it partial minimization} of $f$ of arity $n+m$ is a cost function $h$ of arity $n$ obtained by
\begin{align*}
h(x_1,\dots,x_n) = \min_{x_{n+1},\dots,x_{n+m} \in D}f(x_1,\dots,x_n, x_{n+1},\dots,x_{n+m}) \quad (x_1,\dots,x_n \in D).
\end{align*}
For a language $\Gamma$, the {\it expressive power} $\average{\Gamma}$ of $\Gamma$ is the smallest weighted relational clone (as a set) containing $\Gamma$~\cite{book/Zivny12}.
A cost function $f$ is said to be {\it representable by a language $\Gamma$}
if $f \in \average{\Gamma}$.

By using these notions, \v{Z}ivn{\'y}--Cohen--Jeavons~\cite{DAM/ZCJ09} noted that the set of network representable functions are equal to the expressive power of $\Gamma_{\rm sub,2}$.
\begin{lem}[\cite{DAM/ZCJ09}]\label{lem:Gamma_sub2}
	The set of network representable functions coincides with $\average{\Gamma_{\rm sub,2}}$.
\end{lem}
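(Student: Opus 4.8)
The plan is to prove the two inclusions separately, showing that the class $\mathcal{N}$ of network representable functions satisfies $\mathcal{N}\subseteq\average{\Gamma_{\rm sub,2}}$ and $\average{\Gamma_{\rm sub,2}}\subseteq\mathcal{N}$. For the second inclusion, the natural strategy is to show directly that $\mathcal{N}$ is a weighted relational clone containing $\Gamma_{\rm sub,2}$; since $\average{\Gamma_{\rm sub,2}}$ is by definition the \emph{smallest} such clone, this yields $\average{\Gamma_{\rm sub,2}}\subseteq\mathcal{N}$. First I would exhibit $\Gamma_{\rm sub,2}\subseteq\mathcal{N}$: every submodular function on $\{0,1\}^2$ is network representable, which is the classical result of Iv\u{a}nescu cited in the introduction, and the weighted equality relation $f_{=}$ is representable by a single edge of capacity $+\infty$ between the two nodes. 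Then I would verify the four closure axioms of a weighted relational clone for $\mathcal{N}$. Closure under $\alpha f+\beta$ is immediate by scaling all capacities of a representing network by $\alpha$ and adjusting the additive constant $\kappa$; closure under addition is exactly Property~2 stated in the excerpt, obtained by taking the disjoint union of the two networks and identifying the terminals $s,t$ and the shared variable nodes; closure under partial minimization follows from the definition of network representability itself, since minimizing over the extra variables $x_{n+1},\dots,x_{n+m}$ corresponds to dropping the constraints $X(i)=x_i$ on those nodes and taking the minimum cut, which the remark on extra nodes in the excerpt already anticipates.

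For the first inclusion $\mathcal{N}\subseteq\average{\Gamma_{\rm sub,2}}$, the approach is to unfold the definition of network representability and express the $s$-$t$ cut function of an arbitrary network $G=(V,A;c)$ as a gadget built from binary submodular functions by addition and partial minimization. The key observation is that the cut function decomposes edge by edge: for each arc $(u,v)\in A$ the contribution $c(u,v)\cdot[\,u\in X\cup\{s\},\ v\notin X\cup\{s\}\,]$ is, as a function of the two Boolean indicators $X(u),X(v)$, a nonnegative multiple of the elementary submodular ``cut'' function on $\{0,1\}^2$ (with the terminals $s,t$ treated as constants fixed to $1$ and $0$ respectively). Summing these over all arcs realizes $C$ as a sum of members of $\Gamma_{\rm sub,2}$, which lies in $\average{\Gamma_{\rm sub,2}}$ by closure under addition and scaling. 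Finally, the definition of network representability takes a minimum over all cuts $X$ extending a prescribed assignment on $[n]$; this is precisely a partial minimization over the auxiliary nodes $V\setminus\{s,t,1,\dots,n\}$, and the additive constant $\kappa$ is absorbed by closure under $\alpha f+\beta$. Hence $f\in\average{\Gamma_{\rm sub,2}}$.

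I expect the main technical care to lie in the bookkeeping of the terminals and the direction of the arcs in the edge-by-edge decomposition: one must check that every arc incident to $s$ or $t$ still yields a \emph{submodular} binary function after fixing the terminal coordinate, and that an arc between two genuine variable nodes gives the standard submodular penalty $c(u,v)\,x_u(1-x_v)$ rather than a supermodular one. This is the step where the asymmetry $u\in X\cup\{s\}$, $v\notin X\cup\{s\}$ in the cut-function definition must be tracked precisely so that the orientation of each arc maps to the correct submodular form. The closure-axiom verifications in the other inclusion are routine once the gadget constructions are set up, so the crux is the faithful translation between cuts and binary submodular gadgets.
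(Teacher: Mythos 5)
The paper does not prove this lemma at all: it is attributed to \v{Z}ivn{\'y}--Cohen--Jeavons and cited without argument, so there is no internal proof to compare against. Your two-inclusion strategy is the standard one and is essentially correct: the edge-by-edge decomposition of the cut function into terms $c(u,v)\,x_u(1-x_v)$ (each binary submodular for $c\geq 0$, with arcs touching $s$ or $t$ degenerating to unary functions, which are also in $\Gamma_{\rm sub,2}$) followed by partial minimization over the auxiliary nodes gives $\mathcal{N}\subseteq\average{\Gamma_{\rm sub,2}}$, and verifying the weighted-relational-clone axioms for $\mathcal{N}$ gives the converse since $\average{\Gamma_{\rm sub,2}}$ is the smallest such clone containing $\Gamma_{\rm sub,2}$.

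One concrete slip: a \emph{single} edge of capacity $+\infty$ between the nodes for $x$ and $y$ does not represent the weighted equality relation $f_{=}$; it represents the implication constraint, charging $+\infty$ only on the assignment $(x,y)=(1,0)$ and $0$ on $(0,1)$. You need the two antiparallel edges $(1,2)$ and $(2,1)$, both of capacity $+\infty$ --- exactly the gadget the paper itself uses for the function $h_2$ in the proof of Theorem~\ref{thm:{0,1}eqExpressive}. With that correction your argument goes through.
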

The previous results for network (non)representability are summarized as follows.
\begin{thm}
	The following hold:
	\begin{description}
		\item[\cite{TPAMI/KZ04}] $\average{\Gamma_{\rm sub,2}} \subseteq \Gamma_{\rm sub}$.
		\item[\cite{DAM/BM85}] $\average{\Gamma_{\rm sub,2}} = \average{\Gamma_{\rm sub,3}}$.
		\item[\cite{DAM/ZCJ09}] $\average{\Gamma_{\rm sub,2}} \not\supseteq \Gamma_{\rm sub,4}$.
	\end{description}
\end{thm}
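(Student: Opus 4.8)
The plan is to route all three assertions through Lemma~\ref{lem:Gamma_sub2}, which identifies the network representable functions with $\average{\Gamma_{\rm sub,2}}$. Under this identification the three items become: (i) every network representable function is submodular; (ii) every submodular function on $\{0,1\}^3$ is network representable; and (iii) some submodular function on $\{0,1\}^4$ is not. I would prove these in turn, the first two by explicit closure and gadget arguments and the third by the weighted polymorphism duality.

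For (i) I would show that the full class $\Gamma_{\rm sub}$ is itself a weighted relational clone and then invoke minimality of $\average{\cdot}$. Concretely: $f_{=}$ is submodular; for $\alpha\in\mathbf{Q}_+$ and $\beta\in\mathbf{Q}$ the function $\alpha f+\beta$ inherits submodularity; permuting, duplicating, or dropping coordinates (the substitutions $s_1,s_2$ appearing in an addition) sends a submodular function to a submodular function, so any addition of submodular functions is submodular; and the partial minimization of a submodular function is again submodular, a classical fact provable directly from the $\wedge,\vee$ inequality. Hence $\Gamma_{\rm sub}$ is closed under the four operations defining a weighted relational clone, and since $\Gamma_{\rm sub,2}\subseteq\Gamma_{\rm sub}$ the smallest such clone containing $\Gamma_{\rm sub,2}$ lies inside $\Gamma_{\rm sub}$, giving $\average{\Gamma_{\rm sub,2}}\subseteq\Gamma_{\rm sub}$.

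For (ii) the inclusion $\average{\Gamma_{\rm sub,2}}\subseteq\average{\Gamma_{\rm sub,3}}$ is immediate from $\Gamma_{\rm sub,2}\subseteq\Gamma_{\rm sub,3}$ and monotonicity of $\average{\cdot}$, so the real work is $\Gamma_{\rm sub,3}\subseteq\average{\Gamma_{\rm sub,2}}$. I would write a submodular $f$ on $\{0,1\}^3$ in its unique multilinear form and isolate the only genuinely ternary term, the cubic monomial $a_{123}x_1x_2x_3$. Examining the discrete second differences $\Delta_i\Delta_j f=a_{ij}+a_{123}x_\ell$ shows that submodularity forces both $a_{ij}\le 0$ and $a_{ij}+a_{123}\le 0$ for each pair. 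When $a_{123}\le 0$, stripping the cubic term leaves a submodular multilinear function of degree $\le 2$, which is a sum of binary pieces $a_{ij}x_ix_j$ with $a_{ij}\le 0$ (each in $\Gamma_{\rm sub,2}$) plus unary terms; the cubic piece itself is realized by the one-node gadget and partial minimization $-x_1x_2x_3=\min_{z\in\{0,1\}} z(2-x_1-x_2-x_3)$, whose summands $-zx_i$ and $2z$ lie in $\Gamma_{\rm sub,2}$ and $\Gamma_{\rm sub,1}$. When $a_{123}>0$, I would reduce to the previous case by simultaneously complementing all three variables, an operation that preserves submodularity, flips the sign of the cubic coefficient, and carries each binary submodular building block to another binary submodular one, hence preserves membership in $\average{\Gamma_{\rm sub,2}}$. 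Either way $f$ lands in $\average{\Gamma_{\rm sub,2}}$.

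For (iii) I would use the Galois-type characterization that $\average{\Gamma_{\rm sub,2}}$ consists exactly of the cost functions improved by every weighted polymorphism of $\Gamma_{\rm sub,2}$~\cite{SICOMP/CCCJZ13}. Because $\Gamma_{\rm sub,2}$ constrains only binary interactions, it admits strictly more weighted polymorphisms than the full class $\Gamma_{\rm sub}$, and the strategy is to produce one such weighted polymorphism $\omega$ that is not shared by all submodular functions and then to exhibit an explicit submodular $f$ on $\{0,1\}^4$ violating the improvement inequality for $\omega$, so that $f\in\Gamma_{\rm sub,4}\setminus\average{\Gamma_{\rm sub,2}}$. Equivalently, one may phrase network representability of a fixed $f$ as a linear program whose unknowns are the edge capacities and whose constraints force the minimum-cut values to match $f$, and then certify infeasibility by an explicit dual (Farkas) vector. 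Either way, the hard part of the whole theorem is concentrated here: the obstruction is purely that of locating the right four-variable witness together with its certificate, since once a candidate $f$ and a $\omega$ (or dual vector) are named the verification is a finite check.
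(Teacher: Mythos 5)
This theorem is stated in the paper purely as a summary of prior work (the three items are attributed to \cite{TPAMI/KZ04}, \cite{DAM/BM85}, and \cite{DAM/ZCJ09} respectively, and no proof is given), so the comparison here is against the cited original arguments rather than against anything in the text. Your treatment of the first two items is correct and essentially reproduces those arguments. For $\average{\Gamma_{\rm sub,2}} \subseteq \Gamma_{\rm sub}$, showing that $\Gamma_{\rm sub}$ is itself a weighted relational clone (the equality relation is submodular, and submodularity survives nonnegative scaling, addition of constants, variable identification/duplication, and partial minimization) and then invoking minimality of $\average{\cdot}$ is exactly the right mechanism. For $\average{\Gamma_{\rm sub,2}} = \average{\Gamma_{\rm sub,3}}$, the multilinear decomposition, the observation that the second differences $a_{ij}+a_{123}x_\ell \le 0$ pin down the signs, the gadget $-x_1x_2x_3=\min_{z}z(2-x_1-x_2-x_3)$ for the negative-cubic case, and the reduction of the positive-cubic case by complementation (which preserves both submodularity and membership in $\average{\Gamma_{\rm sub,2}}$, cf.\ Lemma~\ref{lem:network rep}) together constitute a complete and correct proof of Billionnet--Minoux in this language.

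The gap is in the third item. What you have written there is a proof strategy, not a proof: you never exhibit the 4-ary submodular function $f$, never exhibit the weighted polymorphism $\omega$ of $\Gamma_{\rm sub,2}$ (or the Farkas certificate in the LP formulation), and never perform the finite verification that $\sum_\varphi \omega(\varphi)f(\varphi(x^1,\dots,x^k))>0$ for some tuple of arguments. You acknowledge this yourself when you write that the hard part is ``locating the right four-variable witness together with its certificate'' --- but that location is the entire mathematical content of the \v{Z}ivn{\'y}--Cohen--Jeavons result; everything else in your paragraph is generic machinery that would apply equally to a (false) attempt to show $\Gamma_{\rm sub,3}\not\subseteq\average{\Gamma_{\rm sub,2}}$. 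Note also that your remark that $\Gamma_{\rm sub,2}$ ``admits strictly more weighted polymorphisms than the full class $\Gamma_{\rm sub}$'' is precisely what needs to be certified by an explicit example and cannot be asserted a priori; indeed by item (ii) the analogous statement fails at arity $3$. As it stands, item (iii) must either be completed with a concrete witness and certificate (in the spirit of the explicit $\omega_2$, $\varphi_1,\dots,\varphi_4$ and evaluation table the paper constructs for Theorem~\ref{thm:3ary2subExpressive}) or be left as a citation to \cite{DAM/ZCJ09}.
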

When proving $\average{\Gamma_{\rm sub,2}} \not\supseteq \Gamma_{\rm sub,4}$,
\v{Z}ivn{\'y}--Cohen--Jeavons~\cite{DAM/ZCJ09} actually found a 4-ary submodular function $f$ such that $f \not\in \average{\Gamma_{\rm sub,2}}$.

\subsection{Weighted polymorphisms}\label{subsec:wPol}
How can we prove $f \not\in \average{\Gamma}$?
We here introduce algebraic objects known as {\it weighted polymorphisms}, for proving this.
A function $\varphi : D^k \rightarrow D$ is called a $k$-ary {\it operation} on $D$.
For $x^1 = (x^1_1,x^1_2,\dots, x^1_n),\dots,x^k = (x^k_1,x^k_2,\dots, x^k_n) \in D^n$, we define $\varphi(x^1,x^2,\dots,x^k)$ by $(x^1,x^2,\dots,x^k) \mapsto \left(\varphi(x^1_1,x^2_1,\dots,x^k_1), \dots, \varphi(x^1_n,x^2_n, \dots, x^k_n)\right) \in D^n$.
A $k$-ary {\it projection} $e^{(k)}_i$ for $i \in [k]$ on $D$ is defined by
$x \mapsto x_i$ for $x = (x_1,x_2,\dots,x_k) \in D^k$.
A $k$-ary operation $\varphi$ is called a {\it polymorphism} of $\Gamma$ if for all $f \in \Gamma$ and for all $x^1,x^2,\dots,x^k \in {\rm dom}\ f$, it holds that $\varphi(x^1,x^2,\dots,x^k) \in {\rm dom}\ f$.
Let ${\rm Pol}^{(k)}(\Gamma)$ be the set of $k$-ary polymorphisms of $\Gamma$,
and let ${\rm Pol}(\Gamma) := \bigcup_k {\rm Pol}^{(k)}(\Gamma)$.
Note that for any $\Gamma$, all projections are in ${\rm Pol}(\Gamma)$.
Let us define a weighted polymorphism.
A function $\omega : {\rm Pol}^{(k)}(\Gamma) \rightarrow \mathbf{Q}$ is called a $k$-ary {\it weighted polymorphism} of $\Gamma$~\cite{SICOMP/CCCJZ13} if it satisfies the following:
\begin{itemize}
	\item $\sum_{\varphi \in {\rm Pol}^{(k)}(\Gamma)} \omega(\varphi) = 0$.
	\item If $\omega(\varphi) < 0$, then $\varphi$ is a projection.
	\item For all $f \in \Gamma$ and for all $x^1,x^2, \dots, x^k \in {\rm dom}\ f$,
	\begin{align*}
	\sum_{\varphi \in {\rm Pol}^{(k)}(\Gamma)} \omega(\varphi) f(\varphi(x^1,x^2,\dots, x^k)) \leq 0.
	\end{align*}
\end{itemize}
Let ${\rm wPol}^{(k)}(\Gamma)$ be the set of $k$-ary weighted polymorphisms of $\Gamma$,
and let ${\rm wPol}(\Gamma) := \bigcup_k {\rm wPol}^{(k)}(\Gamma)$.
Here the following lemma holds:
\begin{lem}[\cite{SICOMP/CCCJZ13}]\label{lem:exp chara}
	Suppose that $\Gamma$ is a language on $D$
	and $f$ is a cost function on $D$.
	If there exist some $\omega \in {\rm wPol}^{(k)}(\Gamma)$ and $x^1,x^2, \dots, x^k \in {\rm dom}\ f$ satisfying
	\begin{align*}
	\sum_{\varphi \in {\rm Pol}^{(k)}(\Gamma)} \omega(\varphi) f(\varphi(x^1,x^2,\dots,x^k)) > 0,
	\end{align*}
	then it holds that $f \not\in \average{\Gamma}$.
\end{lem}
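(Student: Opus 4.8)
The plan is to exhibit a weighted relational clone that contains $\Gamma$ but excludes $f$; since $\average{\Gamma}$ is by definition the smallest weighted relational clone containing $\Gamma$, this forces $f \notin \average{\Gamma}$. The natural candidate is the set of all cost functions that respect the inequality certified by $\omega$. Concretely, for a cost function $g$ on $D$ of arity $r$, say that $g$ is \emph{improved by} $\omega$ if
\begin{align*}
\sum_{\varphi \in {\rm Pol}^{(k)}(\Gamma)} \omega(\varphi)\, g(\varphi(y^1,\dots,y^k)) \le 0 \qquad \text{for all } y^1,\dots,y^k \in {\rm dom}\ g,
\end{align*}
and let $\Gamma_\omega$ be the set of all such $g$. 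The third defining property of a weighted polymorphism says exactly that $\Gamma \subseteq \Gamma_\omega$, and the hypothesis on $f$ says exactly that $f \notin \Gamma_\omega$. Hence everything reduces to the claim that $\Gamma_\omega$ is a weighted relational clone. Before checking the four axioms I would record one useful consequence of membership: since values lie in $\overline{\mathbf{Q}}$ (there is no $-\infty$), if $g \in \Gamma_\omega$ then no $\varphi$ with $\omega(\varphi) > 0$ can send a tuple from ${\rm dom}\ g$ outside ${\rm dom}\ g$, for otherwise the left-hand side would be $+\infty$; equivalently every positively weighted $\varphi$ is a polymorphism of $g$, while the negatively weighted $\varphi$ are projections, which preserve every domain trivially. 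Thus all terms appearing below are finite and may be freely rearranged.

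Next I would verify the three easy axioms. For the weighted equality relation $f_{=}$, every tuple in its domain is diagonal and $\varphi$ maps diagonal tuples to diagonal tuples, so each summand vanishes. For $\alpha g + \beta$ with $\alpha \in \mathbf{Q}_+$ and $\beta \in \mathbf{Q}$, linearity splits the sum into $\alpha \sum_{\varphi} \omega(\varphi) g(\varphi(\cdot)) \le 0$ plus $\beta \sum_{\varphi} \omega(\varphi)$, and the latter vanishes by the normalization $\sum_{\varphi} \omega(\varphi) = 0$. For an addition $h$ of $g, g' \in \Gamma_\omega$, the point is that $\varphi$ commutes with the coordinate substitutions $s_1, s_2$, so $h(\varphi(y^1,\dots,y^k))$ splits coordinatewise into a $g$-part evaluated at the $s_1$-restrictions of the $y^j$ and a $g'$-part evaluated at the $s_2$-restrictions; these restrictions lie in ${\rm dom}\ g$ and ${\rm dom}\ g'$ respectively, so the $\omega$-weighted sum splits into two sums, each $\le 0$.

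The main obstacle is closure under partial minimization, and this is where the constraint ``$\omega(\varphi) < 0 \Rightarrow \varphi$ is a projection'' is indispensable. Suppose $h$ is the partial minimization of $g \in \Gamma_\omega$ over the last $m$ coordinates, and fix $y^1,\dots,y^k \in {\rm dom}\ h$. Since $D$ is finite, for each $j$ the minimum defining $h(y^j)$ is attained at some $z^j \in D^m$; set $w^j := (y^j, z^j) \in {\rm dom}\ g$. Applying $g \in \Gamma_\omega$ to $w^1,\dots,w^k$ gives $\sum_{\varphi} \omega(\varphi) g(\varphi(w^1,\dots,w^k)) \le 0$. Because $\varphi$ acts coordinatewise, $\varphi(w^1,\dots,w^k) = (\varphi(y^1,\dots,y^k),\, \varphi(z^1,\dots,z^k))$, so $g(\varphi(w^1,\dots,w^k)) \ge h(\varphi(y^1,\dots,y^k))$ by the definition of $h$ as a minimum over the last $m$ coordinates. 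For $\varphi$ with $\omega(\varphi) \ge 0$ this inequality points the right way; for $\varphi$ with $\omega(\varphi) < 0$, $\varphi$ is a projection $e^{(k)}_i$, whence $\varphi(w^1,\dots,w^k) = w^i$ and $g(w^i) = g(y^i,z^i) = h(y^i) = h(\varphi(y^1,\dots,y^k))$ with \emph{equality}, since $z^i$ was chosen as a minimizer. Multiplying each instance by $\omega(\varphi)$ and summing, the positive weights preserve the inequality while the negative weights contribute equalities, so $\sum_{\varphi} \omega(\varphi) h(\varphi(y^1,\dots,y^k)) \le \sum_{\varphi} \omega(\varphi) g(\varphi(w^1,\dots,w^k)) \le 0$. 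Hence $h \in \Gamma_\omega$, completing the verification that $\Gamma_\omega$ is a weighted relational clone. Since $\average{\Gamma}$ is the smallest such clone containing $\Gamma$ and $\Gamma \subseteq \Gamma_\omega$, we obtain $\average{\Gamma} \subseteq \Gamma_\omega$, and as $f \notin \Gamma_\omega$ this yields $f \notin \average{\Gamma}$, as required.
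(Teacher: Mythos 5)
The paper does not prove this lemma at all---it is imported verbatim from \cite{SICOMP/CCCJZ13}---so there is no internal proof to compare against. Your argument is correct and is essentially the standard one from that reference: you define the class $\Gamma_\omega$ of cost functions improved by $\omega$, check that it is a weighted relational clone containing $\Gamma$, and conclude by minimality of $\average{\Gamma}$. All four closure verifications are sound, and you correctly isolate the two places where the definition of a weighted polymorphism is actually used: the normalization $\sum_\varphi \omega(\varphi)=0$ for closure under adding constants, and the condition that negatively weighted operations are projections, which gives the exact equality needed to push the inequality through partial minimization.
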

Thus we can prove nonrepresentability by using Lemma~\ref{lem:exp chara}.

\section{General framework for network representability}\label{sec:general framework}
\subsection{Previous approaches of network representation over $D$}\label{subsec:previous}
Here we explain previous approaches of network representation for functions over a general finite set $D$.
Ishii~\cite{master/I14_E} considered a method of representing a bisubmodular function, which is a function on $\{0, -1, 1\}^n$, by a skew-symmetric network.
A network $G = (\{s^+, s^-, 1^+,1^-, \dots, N^+, N^-\}, A; c)$ is said to be {\it skew-symmetric} if it satisfies that if $(u,v) \in A$, then $(\overline{v}, \overline{u}) \in A$ and $c(u,v) = c(\overline{v}, \overline{u})$.
Here define $\overline{u}$ by $\overline{u} := i^+$ if $u = i^-$ and $\overline{u} := i^-$ if $u = i^+$.
An $s^+$-$s^-$ cut $X$ is said to be {\it transversal} if $X \not\supseteq \{i^+, i^-\}$ for every $i \in [n]$.
The set of transversal $s^+$-$s^-$ cuts is identified with $\{0,-1,1\}^N$ by $X \mapsto x_i := X(i^+) - X(i^-)$ for $i \in [N]$.
Ishii gave a definition of the network representability for a function on $\{0, -1, 1\}^n$ as follows:
\begin{quote}
	A function $f : \{0, -1, 1\}^n \rightarrow \overline{\mathbf{Q}}$ is said to be {\it skew-symmetric network representable} if there exist a skew-symmetric network $G = (V,A; c)$ and a constant $\kappa \in \mathbf{Q}$ satisfying the following:
	\begin{itemize}
		\item $V \supseteq \{s^+, s^-, 1^+,1^-, 2^+,2^-, \dots, n^+, n^- \}$.
		\item For all $x = (x_1,x_2,\dots, x_n) \in \{0, -1, 1\}^n$,
		\begin{align*}
		f(x) = \min\{ C(X) \mid \text{$X$: transversal $s^+$-$s^-$ cut, }X(i^+)-X(i^-) = x_i \text{ for } i \in [n] \} + \kappa.
		\end{align*}
	\end{itemize}
\end{quote}
In a skew-symmetric network,
the minimal minimum $s^+$-$s^-$ cut is transversal~\cite{master/I14_E}.
Hence a skew-symmetric network representable function can be minimized efficiently via computing a minimum $s^+$-$s^-$ cut.
Here the following holds:
\begin{lem}[\cite{master/I14_E}]\label{lem:ishii}
	Skew-symmetric network representable functions are bisubmodular.
\end{lem}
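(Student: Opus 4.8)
The plan is to show that the skew-symmetric $s^+$-$s^-$ cut function, restricted to transversal cuts and reparametrized by $x_i := X(i^+) - X(i^-)$, satisfies the bisubmodular inequality $f(x) + f(y) \geq f(x \sqcap y) + f(x \sqcup y)$. The key structural fact I would exploit is the classical submodularity of the ordinary $s^+$-$s^-$ cut function on $2^V$: for any two cuts $X, Y \subseteq V$ (viewed with $s^+$ added), one has $C(X) + C(Y) \geq C(X \cap Y) + C(X \cup Y)$, where $\cap, \cup$ are the lattice operations on subsets. Since $f(x)$ is defined as a \emph{minimum} of $C$ over the fiber of transversal cuts mapping to $x$, the proof must bridge from the subset lattice operations $\cap, \cup$ to the bisubmodular operations $\sqcap, \sqcup$ on $\{0,-1,1\}^n$.

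First I would fix $x, y \in \{0,-1,1\}^n$ in the effective domain of $f$ and choose minimizing transversal cuts $X, Y$ achieving $f(x) + \kappa = C(X)$ and $f(y) + \kappa = C(Y)$. The main step is then to verify that the induced vectors satisfy $(X \cap Y)(i^+) - (X \cap Y)(i^-) = (x \sqcap y)_i$ and $(X \cup Y)(i^+) - (X \cup Y)(i^-) = (x \sqcup y)_i$ for each $i$, by a coordinatewise case analysis on the values $x_i, y_i \in \{0,-1,1\}$. Because $X, Y$ are transversal, each of $X, Y$ contains at most one of $i^+, i^-$; I would check each of the nine combinations of $(x_i, y_i)$ and confirm the lattice operations on the indicator pairs reproduce exactly the definitions of $\sqcap$ and $\sqcup$ given in the preliminaries.

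The subtle point — and I expect this to be the main obstacle — is that $X \cap Y$ and $X \cup Y$ need not themselves be \emph{transversal}: if $i^+ \in X$, $i^- \in Y$ (so $x_i = 1$, $y_i = -1$), then $X \cup Y \supseteq \{i^+, i^-\}$, which is forbidden. Here I would use skew-symmetry to repair the cuts. The natural device is to pass to $Y$ and its reflection $\overline{Y} := \{\overline{u} : u \in Y\}$: skew-symmetry guarantees $C(Y) = C(\overline{Y})$, so I may replace $Y$ by $\overline{Y}$ without changing the cut value, and more usefully apply submodularity to cleverly chosen pairs among $X, Y, \overline{X}, \overline{Y}$ so that the resulting meet and join \emph{are} transversal and map to $x \sqcap y$ and $x \sqcup y$. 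Concretely, I would argue that at least one of the pairs $(X \cap Y, X \cup Y)$ or $(X \cap \overline{Y}, X \cup \overline{Y})$ yields transversal cuts inducing the correct bisubmodular vectors; the skew-symmetric capacity identity then ensures the value bound is preserved.

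Finally, chaining the inequalities, one obtains
\[
f(x) + f(y) + 2\kappa = C(X) + C(Y) \geq C(\text{meet}) + C(\text{join}) \geq f(x \sqcap y) + f(x \sqcup y) + 2\kappa,
\]
where the last inequality holds because the meet and join are \emph{feasible} transversal cuts in the respective fibers, hence their values dominate the corresponding minima defining $f(x \sqcap y)$ and $f(x \sqcup y)$. Cancelling $2\kappa$ gives exactly the bisubmodular inequality, completing the proof. The only genuinely delicate bookkeeping is the transversality repair via reflection; everything else is the standard "minimizer + lattice-submodularity of cuts" argument.
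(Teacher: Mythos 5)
The paper does not actually prove this lemma---it is imported from Ishii's thesis with a citation---so your proposal has to stand on its own. Its skeleton is the right one: pick minimizing transversal cuts $X,Y$ (the inequality is trivial if $f(x)$ or $f(y)$ is $+\infty$), apply submodularity of the cut function, and check coordinatewise that $X\cap Y$ and $X\cup Y$ realize $x\sqcap y$ and $x\sqcup y$. Your diagnosis of the obstacle is also exactly right: the nine-case check succeeds everywhere except that $X\cup Y$ contains both $i^+$ and $i^-$ at coordinates with $\{x_i,y_i\}=\{1,-1\}$, so it need not be a feasible (transversal) cut in the fiber over $x\sqcup y$.

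The repair step, however, has a genuine gap. The identity $C(Y)=C(\overline{Y})$ with $\overline{Y}:=\{\overline{u}\mid u\in Y\}$ is false: in the skew-symmetric network with the single edge pair $(s^+,1^+)$, $(1^-,s^-)$, each of capacity $1$, one has $C(\{1^+\})=0$ but $C(\overline{\{1^+\}})=C(\{1^-\})=2$. What skew-symmetry actually yields is $C(W)=C(\overline{W^c})$, where $W^c$ is the complement of $W$ inside $V\setminus\{s^+,s^-\}$, because $(u,v)\mapsto(\overline{v},\overline{u})$ is a capacity-preserving bijection between the edges cut by $W$ and those cut by $\overline{W^c}$. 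Moreover, even granting some reflection identity, the strategy ``at least one of the pairs $(X\cap Y,X\cup Y)$ or $(X\cap\overline{Y},X\cup\overline{Y})$ works'' cannot succeed: reflection acts globally on all coordinates while the conflicts are local. For $x=(1,1)$, $y=(-1,1)$ with $X=\{1^+,2^+\}$ and $Y=\{1^-,2^+\}$, both $X\cup Y$ and $X\cup\overline{Y}$ fail to be transversal. The correct repair keeps the pair $(X\cap Y,\,X\cup Y)$---note $X\cap Y$ is automatically transversal and realizes $x\sqcap y$ in all nine cases---and then transversalizes $Z:=X\cup Y$ afterwards: applying submodularity to $Z$ and $\overline{Z^c}$ and using $C(Z)=C(\overline{Z^c})$ together with $C(Z\cup\overline{Z^c})=C(Z\cap\overline{Z^c})$ (the reflection identity again, since $Z\cup\overline{Z^c}=\overline{(Z\cap\overline{Z^c})^c}$) gives $2C(Z)\ge 2C(Z\cap\overline{Z^c})$. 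The set $Z\cap\overline{Z^c}=\{v\in Z\mid \overline{v}\notin Z\}$ deletes exactly the doubled pairs $\{i^+,i^-\}$, is transversal, and realizes $x\sqcup y$; this is precisely the $(n,\rho_2)$-retractability of skew-symmetric networks recorded in Section~3.2 of the paper. With that substitution your final chain of inequalities goes through.
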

Moreover Ishii proved the following theorem:
\begin{thm}[\cite{master/I14_E}]\label{thm:ishii}
	All binary bisubmodular functions are skew-symmetric network representable.
\end{thm}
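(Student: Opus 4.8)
The plan is to exploit closure properties of skew-symmetric network representable functions to reduce the claim to representing a handful of elementary ``atoms,'' and then to match the cone generated by these atoms with the cone of binary bisubmodular functions. First I would record the closure properties. If $f$ and $g$ are skew-symmetric network representable, then so is $\alpha f + \beta$ for $\alpha \in \mathbf{Q}_+,\ \beta \in \mathbf{Q}$ (scale all capacities by $\alpha$ and adjust $\kappa$), and so is $f+g$: taking the union of the two skew-symmetric networks while identifying the terminals $s^+,s^-$ and the variable nodes $i^+,i^-$ again yields a skew-symmetric network whose $s^+$-$s^-$ cut function is the sum of the two cut functions. An infinite value at a prescribed point $x$ can be forced by an infinite-capacity skew-symmetric edge pair that is cut exactly by the transversal cut encoding $x$. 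Hence it suffices to exhibit, for a spanning set of the cone of binary bisubmodular functions, an explicit skew-symmetric representation of each generator, and to verify that an arbitrary binary bisubmodular $f$ is a nonnegative combination of these generators plus a constant (and, if ${\rm dom}\,f \neq \{0,-1,1\}^2$, infinite-valued constraints).

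Next I would dispose of the unary part. Encoding a variable by $x_i = X(i^+) - X(i^-)$, the transversal cuts correspond to $(X(i^+),X(i^-)) \in \{(0,0),(1,0),(0,1)\}$, i.e.\ $x_i \in \{0,1,-1\}$. A short calculation shows that the skew-symmetric pair $\{(s^+,i^-),(i^+,s^-)\}$ of capacity $c$ contributes $c(x_i+1)$ to the cut function, while its mirror $\{(s^+,i^+),(i^-,s^-)\}$ contributes $c(1-x_i)$, so all modular unary terms are representable; the self-paired edges $(i^+,i^-)$ and $(i^-,i^+)$, each of capacity $c$, together contribute $c\,|x_i|$. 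Since a unary bisubmodular function equals a modular function plus a nonnegative multiple of $|x_i|$ up to a constant (the multiple being $\tfrac12(f(1)+f(-1)-2f(0)) \geq 0$), every unary bisubmodular function is skew-symmetric network representable.

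The genuinely binary contributions come from the edges joining the two variable gadgets. Under the skew map $(u,v) \mapsto (\overline v,\overline u)$ these group into four skew-symmetric pairs, e.g.\ $\{(1^+,2^+),(2^-,1^-)\}$; computing the induced $3\times 3$ table of each such pair as a function of $(x_1,x_2)$ gives four elementary interaction functions, each bisubmodular by Lemma~\ref{lem:ishii}. The remaining and main step is completeness: showing that, together with the unary atoms and constants, these interaction atoms positively span the whole binary bisubmodular cone. Concretely, writing $f$ in the basis of atom-contributions yields a linear system whose solution expresses the required capacities as combinations of the local exchange quantities $f(x)+f(y)-f(x \sqcap y)-f(x \sqcup y)$, and the task is to show each such capacity is nonnegative precisely because $f$ is bisubmodular.

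I expect this completeness step to be the main obstacle. It amounts to an extreme-ray analysis of the binary bisubmodular cone: one must check that every extreme ray is realized by a nonnegative combination of the above gadgets, and in particular confirm that six nodes suffice so that no auxiliary nodes and partial minimization are needed---or, should some extreme ray resist a bare six-node realization, supply the auxiliary nodes whose partial minimization produces it. Lemma~\ref{lem:ishii} already gives the inclusion of the representable cone in the bisubmodular cone, so only this reverse inclusion, together with the bookkeeping for infinite values via infinite-capacity edges, remains; I would finish by verifying that the explicit capacity formulas both reproduce $f$ on all nine points and stay nonnegative under the bisubmodular inequalities.
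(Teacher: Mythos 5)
There is a genuine gap, and it sits exactly where you predicted: the ``completeness step'' is not an expected difficulty to be worked out later, it is the entire content of the theorem, and the specific plan you propose for it provably cannot succeed without the auxiliary nodes you only mention as a fallback. (Note also that the paper does not prove this statement at all --- it is cited from Ishii's master thesis --- so the proof really does have to be supplied in full.)

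Your closure properties and the unary analysis are fine. The problem is the binary part. The four skew-symmetric interaction pairs between the gadgets of $x_1$ and $x_2$ contribute, as functions of $(x_1,x_2)$, exactly $\max(0,x_1-x_2)$, $\max(0,x_2-x_1)$, $\max(0,x_1+x_2)$, and $\max(0,-x_1-x_2)$. Together with constants, the unary modular atoms and $|x_1|,|x_2|$, these span only the $7$-dimensional subspace of $\overline{\mathbf{Q}}^{\{0,-1,1\}^2}$ generated by
\begin{align*}
1,\ x_1,\ x_2,\ |x_1|,\ |x_2|,\ |x_1-x_2|,\ |x_1+x_2|,
\end{align*}
whereas the cone of binary bisubmodular functions is not contained in any such subspace: the function $h(x_1,x_2):=\max(x_1,x_2,0)$ is bisubmodular (one checks $h(x)+h(y)\geq h(x\sqcap y)+h(x\sqcup y)$ for all nine-point pairs) but solving the $7$ linear conditions forces simultaneously $d+e+2q=0$ from the value at $(1,1)$ and $d+e+2q=1$ from the value at $(-1,-1)$, a contradiction. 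Hence no skew-symmetric network on the six nodes $s^+,s^-,1^\pm,2^\pm$ alone represents $h$, and the theorem genuinely requires extra node pairs together with partial minimization. Your proposal supplies no such construction and no argument for which auxiliary gadgets generate the missing extreme rays, so the reverse inclusion --- the whole point of the theorem --- remains unestablished. A smaller but real second gap is the treatment of infinite values: a single infinite-capacity skew-symmetric edge pair is cut by many transversal cuts, not just the one encoding a prescribed point, so ``forcing $f(x)=+\infty$ at a prescribed $x$'' does not work pointwise; one must instead match the achievable infinity patterns with the possible effective domains of binary bisubmodular functions (which are closed under $\sqcap$ and $\sqcup$).
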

This representation has both Property 1 and Property 2. 
Therefore a bisubmodular function given as the sum of binary bisubmodular functions is skew-symmetric network representable.
Thanks to extra nodes,
a bisubmodular function given as partial minimization of a skew-symmetric network representable function is also skew-symmetric network representable.

Iwata--Wahlstr{\"o}m--Yoshida~\cite{SICOMP/IWY16} considered another method of representing a $k$-submodular function by a network.
\begin{quote}
	A function $f : [0,k]^n \rightarrow \overline{\mathbf{Q}}$ is said to be {\it $k$-network representable} if there exist a network $G = (V,A; c)$ and a constant $\kappa \in \mathbf{Q}$ satisfying the following:
	\begin{itemize}
		\item $V = \{s, t\} \cup \{ i^l \mid (i,l) \in [n] \times [k]\}$\footnote[2]{The symbol $i^l$ is \emph{not} meant as a number $\underbrace{i \times i \times \cdots \times i}_{l} \in \mathbf{Z}$.}.
		\item The $s$-$t$ cut function $C$ of $G$ satisfies
		\begin{align*}
		C(X) \geq C(\underline{X}) \qquad (X: \text{$s$-$t$ cut}),
		\end{align*}
		where $\underline{X} := \{s\} \cup \bigcup_{i \in [n]} \{ i^l \mid X \cap \{ i^1,i^2,\dots,i^k \} = i^l \text{ for some $l \in [k]$} \}$.
		\item For all $x = (x_1,x_2,\dots, x_n) \in [0,k]^n$, it holds that
		\begin{align*}
		f(x) = C(X_{x}) + \kappa,
		\end{align*}
		where $X_{x} := \{s\} \cup \bigcup_{x_i \neq 0} \{i^l \mid x_i = l\}$.
	\end{itemize}
\end{quote}
$k$-network representable functions can be minimized via computing a minimum $s$-$t$ cut by definition,
and constitute an efficiently minimizable subclass of $k$-submodular functions, as follows.
\begin{lem}[\cite{SICOMP/IWY16}]\label{lem:IWY}
	$k$-network representable functions are $k$-submodular.
\end{lem}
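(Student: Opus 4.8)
The plan is to deduce $k$-submodularity of $f$ from the ordinary submodularity of the $s$-$t$ cut function $C$, by translating the lattice operations $\sqcap,\sqcup$ on $[0,k]^n$ into the Boolean operations $\cap,\cup$ on $2^V$ through the encoding $x \mapsto X_x$. Fixing $x,y \in [0,k]^n$ and using $f(z) = C(X_z) + \kappa$, it suffices to prove
\[
C(X_x) + C(X_y) \;\geq\; C(X_{x \sqcap y}) + C(X_{x \sqcup y}).
\]

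First I would record two set-theoretic identities relating the encoding to $\sqcap$ and $\sqcup$. Working block by block on each $\{i^1,\dots,i^k\}$, and noting that $X_z$ meets this block in $\{i^{z_i}\}$ when $z_i \neq 0$ and in $\emptyset$ when $z_i = 0$, one checks that $X_x \cap X_y$ meets the block in $\{i^{x_i}\}$ exactly when $x_i = y_i \neq 0$ and in $\emptyset$ otherwise. This is precisely the block of $X_{x \sqcap y}$, so $X_{x \sqcap y} = X_x \cap X_y$. For the join I would likewise compute $X_x \cup X_y$ block by block: it is the common singleton when $x_i,y_i$ agree or one of them is $0$, it is empty when $x_i = y_i = 0$, and, crucially, it contains the \emph{two} nodes $\{i^{x_i},i^{y_i}\}$ in the conflict case $0 \neq x_i \neq y_i \neq 0$. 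Applying the canonicalization $\underline{\,\cdot\,}$ retains the singletons and empties exactly the conflict blocks and the zero blocks; comparing with the definition of $\sqcup$, whose rule sends conflicting nonzero coordinates to $0$, yields $\underline{X_x \cup X_y} = X_{x \sqcup y}$.

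Finally I would assemble the inequality. Since an $s$-$t$ cut function is submodular and all of $X_x, X_y$ contain $s$ and avoid $t$ (so $\cap$ and $\cup$ stay within the $s$-$t$ cut lattice), submodularity gives $C(X_x) + C(X_y) \geq C(X_x \cap X_y) + C(X_x \cup X_y)$. Substituting the first identity rewrites the first right-hand term as $C(X_{x \sqcap y})$, while the defining hypothesis $C(X) \geq C(\underline{X})$ applied to $X = X_x \cup X_y$, combined with the second identity, gives $C(X_x \cup X_y) \geq C(X_{x \sqcup y})$. Adding $2\kappa$ and rewriting through $f = C(\cdot) + \kappa$ produces the desired $k$-submodularity inequality.

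I expect the main obstacle to be the join identity $\underline{X_x \cup X_y} = X_{x \sqcup y}$. The union $X_x \cup X_y$ is generally \emph{not} of the canonical form $X_z$, since a block may carry two nodes, so the naive attempt to identify $C(X_x \cup X_y)$ with $f(x \sqcup y) - \kappa$ fails outright. It is exactly the built-in hypothesis $C(X) \geq C(\underline{X})$, rather than submodularity alone, that bridges the non-canonical union back to the canonical encoding of $x \sqcup y$; understanding why this hypothesis appears in the definition is the conceptual heart of the argument.
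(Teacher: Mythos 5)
Your proof is correct: the identities $X_{x \sqcap y} = X_x \cap X_y$ and $\underline{X_x \cup X_y} = X_{x \sqcup y}$ both check out block by block, and combining the submodularity of $C$ with the defining hypothesis $C(X) \geq C(\underline{X})$ applied to $X_x \cup X_y$ yields exactly the $k$-submodular inequality. The paper itself gives no proof, citing the result from Iwata--Wahlstr\"om--Yoshida, and your argument is the standard one behind that citation, with the role of the retraction-type hypothesis correctly identified as the step that handles the non-canonical union.
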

Iwata--Wahlstr{\"o}m--Yoshida constructed networks representing {\it basic $k$-submodular functions}, which are special $k$-submodular functions.
This method also has both Property~1 and Property~2.
Therefore a $k$-submodular function given as the sum of basic $k$-submodular functions is $k$-network representable.

As seen in Section~\ref{subsec:expressive}, network representable functions on $\{0,1\}^n$ are considered as the expressive power of $\Gamma_{\rm sub,2}$,
and hence we can apply the expressive power theory to network representability.
However Ishii and Iwata--Wahlstr{\"o}m--Yoshida network representation methods cannot enjoy the expressive power theory by the following reasons:
\begin{description}
	\item[(i)] The set of network representable functions under Iwata--Wahlstr{\"o}m--Yoshida method is not a weighted relational clone,
	since their method does not allow the existence of extra nodes.
	\item[(ii)] The concept of expressive power only focuses on the representability of functions on the same domain,
	while Ishii and Iwata--Wahlstr{\"o}m--Yoshida methods consider representations of functions over $[0,k]$ by functions over $\{0,1\}$.
\end{description}
We introduce, in the next subsection, a new network represetability definition for resolving (i),
and in Section~\ref{subsec:VCSP perspective}, we also introduce an extension of expressive power for resolving (ii).

\subsection{Definition}\label{subsec:def}
By abstracting the previous approaches,
we here develop a unified framework for network representability over $D$.
The basic idea is the following:
Consider networks having nodes $i^1,i^2,\dots, i^k$ for each $i \in [n]$,
where $|D| \leq 2^k$.
We associate one variable $x_i$ over $D$ with $k$ nodes $i^1, i^2,\dots, i^k$.
The $k$ nodes have $2^k$ intersection patterns with $s$-$t$ cuts.
We specify a set of $|D|$ patterns,
which represents $D$, for each $i$.
The cut function restricted to cuts with specified patterns gives a function on $D^n$.
To remove effect of irrelevant patterns in minimization,
we fix a retraction from all patterns to specified patterns,
and consider networks with the property that the retraction does not increase cut capacity.
Now functions represented by such networks are minimizable via minimum $s$-$t$ cut with retraction.

A formal definition is given as follows.
Let $k$ be a positive integer, and $E$ a subset of $\{0,1\}^k$.
We consider a node $i^l$ for each $(i, l) \in [n] \times [k]$.
For a retraction $\rho : \{0,1\}^k \rightarrow E$, a network $G = (V, A; c)$ is said to be {\it $(n,\rho)$-retractable} if $G$ satisfies the following:
\begin{itemize}
	\item $V \supseteq \{s, t\} \cup \{ i^l \mid (i,l) \in [n] \times [k]\}$.
	\item For all $x = (x_1^1,\dots, x_1^k, x_2^1,\dots,x_2^k,\dots, x_n^1,\dots,x_n^k) \in \{0,1\}^{kn}$,
	\begin{align*}
	C_{\rm min}(x) \geq C_{\rm min}(\rho(x_1^1,\dots,x_1^k), \dots, \rho(x_n^1,\dots,x_n^k)),
	\end{align*}
	where
	\begin{align*}
	C_{\rm min}(x) := \min \{ C(X) \mid \text{$X$: $s$-$t$ cut, } X(i^l) = x_i^l \text{ for } (i,l) \in [n] \times [k]\}.
	\end{align*}
\end{itemize}
Let $\sigma$ be a bijection from $D$ to $E$.
A function $f : D^n \rightarrow \overline{\mathbf{Q}}$ is said to be {\it $(k, \rho, \sigma)$-network representable} if there exist an $(n, \rho)$-retractable network $G = (V, A; c)$ and a constant $\kappa \in \mathbf{Q}$ satisfying that
\begin{align*}
f(x) = C_{\rm min}(\sigma(x_1), \sigma(x_2), \dots, \sigma(x_n)) + \kappa
\end{align*}
for all $x = (x_1,x_2,\dots, x_n) \in D^n$.
A $(k,\rho,\sigma)$-network representable function can be minimized efficiently via computing a minimum $s$-$t$ cut.

\begin{exmp}
\upshape
Let $\rho_2 : \{0,1\}^2 \rightarrow \{(0,0), (0,1), (1,0) \}$ be a retraction defined by $\rho_2(x) := x$ if $x \in \{(0,1), (1,0)\}$ and $\rho_2(x) := (0,0)$ if $x \in \{(0,0), (1,1)\}$.
Suppose that all edge capacities of a network are finite.
Since a network with $2n$ nodes is $(n, \rho_2)$-retractable if and only if the vector of edge capacities satisfies some linear inequalities,
the set of $(n, \rho_2)$-retractable networks with $2n$ nodes forms a polyhedral cone.
Hence every $(n, \rho_2)$-retractable network with $2n$ nodes can be represented as a nonnegative combination of extreme rays of the cone.
Fig.~\ref{fig:2_rho_2sub_network} illustrates all types of extremal $(2, \rho_2)$-retractable networks with four nodes,
where each network is a representative of equivalence class induced by $+ \leftrightarrow -$ and $1 \leftrightarrow 2$.
We obtained these networks via a computer calculation.

Every skew-symmetric network can be represented as a synthesis of the three figures in Fig.~\ref{fig:2_rho_2sub_network} by the definition; first, fourth, and fifth from the left in the first row.
Indeed, for any skew-symmetric network $G = (\{s^+,s^-,1^+,1^-, \dots, n^+,n^-\}, A; c)$ and any distinct $i,j \in [n]$,
the subgraph of $G$ induced by $\{s^+, s^-, i^+, i^-, j^+, j^- \}$ is represented as a nonnegative combination of the four networks.
Thus, $G$ is an $(m,\rho_2)$-retractable network for all $m \leq n$.
\begin{figure}[htb]
\begin{center}
\includegraphics[clip,width=8.0cm]{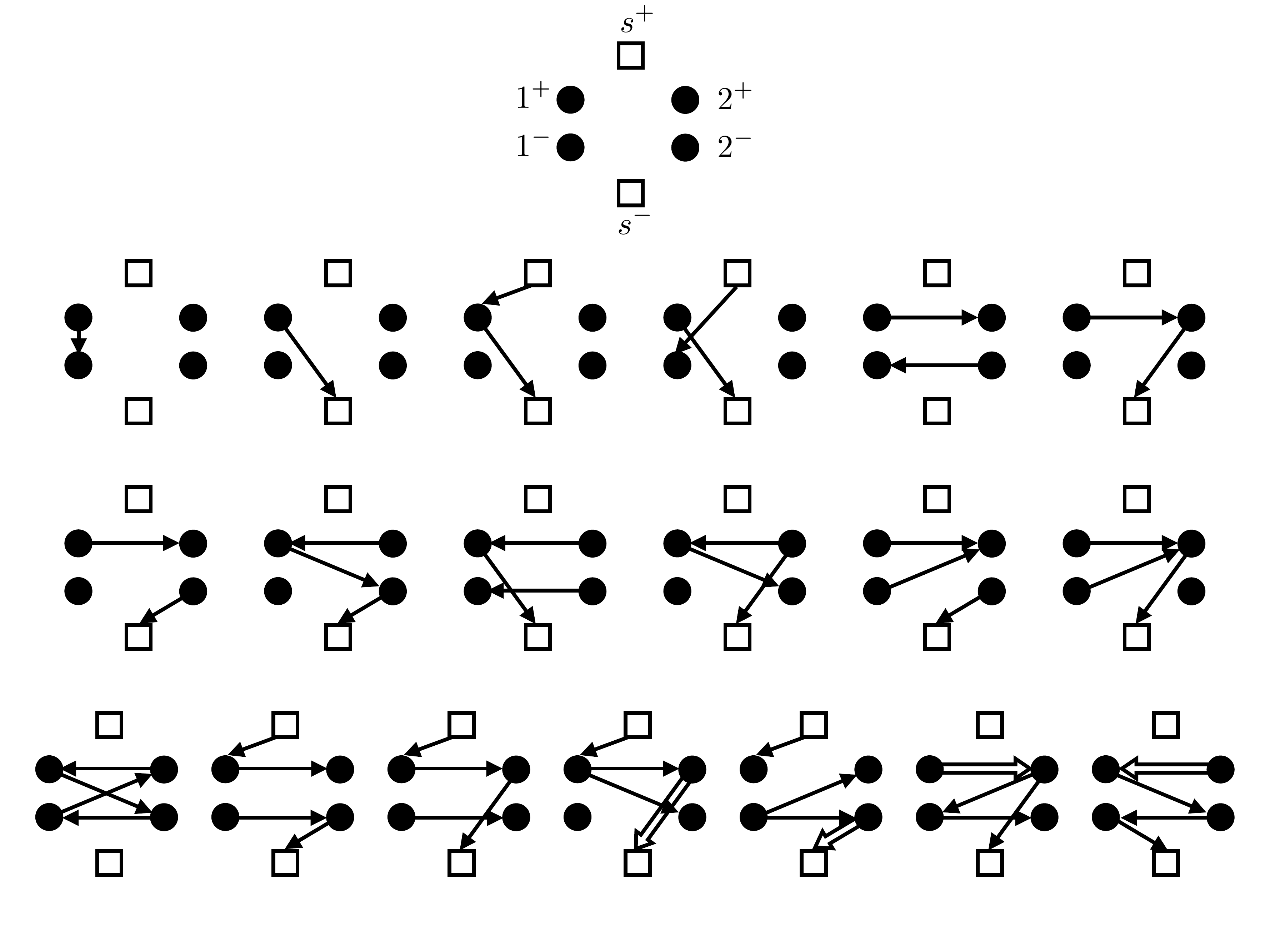}
\caption{All types of extremal $(2, \rho_2)$-retractable networks with four nodes and finite edge capacities,
where each network is a representative of an equivalence class induced by $+ \leftrightarrow -$ and $1 \leftrightarrow 2$. Single edge means the capacity of the edge is equal to 1, and double edge means the capacity of the edge is equal to 2.}
\label{fig:2_rho_2sub_network}
\end{center}
\end{figure}
\end{exmp}

The network representability in the sense of Kolmogorov--Zabih is the same as the $(1,{\rm id}, {\rm id})$-network representability,
where ${\rm id} : \{0,1\} \rightarrow \{0,1\}$ is the identity map.
Let $\sigma_k : [0,k] \rightarrow \{0, 1\}^k$ and $\rho_k : \{0, 1\}^k \rightarrow \{0,1\}^k$ be maps defined by
\begin{align*}
&\sigma_k(x) := \begin{cases}
(0,\dots,0,\smash{\overset{i}{\check{1}}},0,\dots,0) & \text{if $x = i \in [k]$},\\
(0, \dots, 0) & \text{if $x = 0$},
\end{cases}\\
&\rho_k(x) := \begin{cases}
x & \text{if $x = (0,\dots,0, \smash{\overset{i}{\check{1}}},0,\dots,0)$ for some $i \in [k]$},\\
(0, \dots, 0) & \text{otherwise}.
\end{cases}
\end{align*}
Then the skew-symmetric network representability is a special class of the $(2, \rho_2, \sigma_2)$-network representability,
and the $k$-network representability is a special class of the
$(k, \rho_k, \sigma_k)$-network representability.

The $(k,\rho,\sigma)$-network representability possesses both Property 1 and Property 2.
Furthermore a function given as a partial minimization of a $(k,\rho,\sigma)$-network representable function is also $(k,\rho,\sigma)$-network representable.

\subsection{Results on network representability}\label{subsec: results}
In our network representation, one variable is associated with ``several'' nodes even if $D = \{0,1\}$.
Hence the set of network representable functions on $\{0,1\}^n$ in our sense may be strictly larger than that in the original.
The following theorem says that additional network representable functions are only monotone.
\begin{thm}\label{thm:{0,1}main}
	If a function $f$ on $\{0,1\}^n$ is $(k, \rho, \sigma)$-network representable for some $k, \rho, \sigma$,
	then $f$ is $(1,{\rm id}, {\rm id})$-network representable, or monotone.
	Moreover some monotone function is not $(k, \rho, \sigma)$-network representable for any $k, \rho, \sigma$.
\end{thm}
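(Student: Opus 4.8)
The plan is to pass from the $kn$ Boolean nodes of the representation to the $n$ variables of $f$ by a case analysis driven by submodularity together with the retraction inequality. First I would record two structural facts. By definition $C_{\rm min}$ is a partial minimization of the $s$-$t$ cut function of $G$ over the non-designated nodes; since cut functions are submodular and partial minimization preserves submodularity, $C_{\rm min}$ is submodular on $\{0,1\}^{kn}$, and in fact it is $(1,{\rm id},{\rm id})$-network representable in the variables $\{i^l\}$, so $C_{\rm min}\in\average{\Gamma_{\rm sub,2}}$ by Lemma~\ref{lem:Gamma_sub2}. Next, write $e_0:=\sigma(0)$, $e_1:=\sigma(1)$ and set $m:=e_0\wedge e_1$, $M:=e_0\vee e_1$ in $\{0,1\}^k$; as $\rho$ retracts onto $\{e_0,e_1\}$, both $\rho(m),\rho(M)\in\{e_0,e_1\}$. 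Fixing every variable but the $i$-th to values in $E$ and writing $C_{\rm min}(\beta;c)$ for the value with block $i$ set to $\beta\in\{0,1\}^k$ and the remaining blocks set to the context $c$, submodularity of the block-$i$ restriction of $C_{\rm min}$ gives
\[
C_{\rm min}(e_0;c)+C_{\rm min}(e_1;c)\ \ge\ C_{\rm min}(m;c)+C_{\rm min}(M;c),
\]
while $(n,\rho)$-retractability gives $C_{\rm min}(m;c)\ge C_{\rm min}(\rho(m);c)$ and $C_{\rm min}(M;c)\ge C_{\rm min}(\rho(M);c)$. The entire dichotomy comes from combining these three inequalities and splitting on whether $\rho(m)=\rho(M)$.

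If $\rho(m)=\rho(M)$, say both equal $e_0$, the two retraction inequalities read $C_{\rm min}(m;c),C_{\rm min}(M;c)\ge C_{\rm min}(e_0;c)$, so the submodularity inequality collapses to $C_{\rm min}(e_1;c)\ge C_{\rm min}(e_0;c)$ for every $i$ and every context $c$; thus $f$ is nondecreasing in each coordinate (nonincreasing if $\rho(m)=\rho(M)=e_1$), i.e.\ $f$ is monotone — the second alternative.

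If instead $\rho(m)\ne\rho(M)$, say $\rho(m)=e_0$ and $\rho(M)=e_1$, then the three inequalities sandwich to equalities $C_{\rm min}(m;c)=C_{\rm min}(e_0;c)$ and $C_{\rm min}(M;c)=C_{\rm min}(e_1;c)$ for all $E$-contexts $c$. Since $e_0,m$ (resp.\ $e_1,M$) differ exactly on $S_0:=\{l:e_0^l=1,\ e_1^l=0\}$, these equalities say that on the slice used by the representation the value of $C_{\rm min}$ is insensitive to the $S_0$-coordinates of block $i$, whereas every other non-constant node carries the value $x_i$ (the coordinates $S_1:=\{l:e_0^l=0,\ e_1^l=1\}$) or a fixed constant. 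I would then eliminate the $S_0$-nodes by partial minimization, glue all $S_1$-nodes of block $i$ to one representative using the weighted equality relation $f_{=}$, and pin the constant nodes by unary functions; using $C_{\rm min}\in\average{\Gamma_{\rm sub,2}}$ and closure of weighted relational clones under addition and partial minimization, this exhibits $f\in\average{\Gamma_{\rm sub,2}}$, i.e.\ $(1,{\rm id},{\rm id})$-network representability by Lemma~\ref{lem:Gamma_sub2} (the case $\rho(m)=e_1,\rho(M)=e_0$ being symmetric, obtained by reversing all edges and swapping $s$ and $t$). I expect the delicate point to be exactly this elimination: the equalities only compare the two extreme $S_0$-patterns, so one must verify, using retractability again, that minimizing over intermediate $S_0$-patterns does not drop $C_{\rm min}$ below the represented value.

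For the ``moreover'' part I must produce a single monotone function that is not $(k,\rho,\sigma)$-network representable for \emph{any} parameters. By the dichotomy such a function is necessarily monotone, but it cannot be too simple: the conjunction $x_1\wedge x_2$, although supermodular, is already $(2,\rho,\sigma)$-network representable (take $e_0=(0,1)$, $e_1=(1,0)$, a retraction $\rho$ with $\rho(m)=\rho(M)=e_0$, and the single edge $1^1\to 2^2$ of capacity $1$). Hence the witness must involve at least three variables. The plan is to fix an explicit supermodular monotone $f$ on $\{0,1\}^3$ and to certify nonrepresentability by exhibiting a nonnegative combination of the block-submodularity and retractability inequalities above that forces an inequality $f$ violates — equivalently, a weighted polymorphism witnessing $f\notin\average{\cdot}$ via Lemma~\ref{lem:exp chara} in the expressive-power reformulation announced as Theorem~\ref{thm:{0,1}mainExpressive}. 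Pinning down the right function and the matching certificate is the main obstacle for this part.
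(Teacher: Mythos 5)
Your case split on $\rho(\sigma(0)\wedge\sigma(1))$ versus $\rho(\sigma(0)\vee\sigma(1))$ is exactly the paper's (its cases (iii-3)/(iii-4) give monotonicity by the same three-inequality sandwich you use, and its cases (i), (ii), (iii-1), (iii-2) give representability), so the skeleton is right. But the step you yourself flag as delicate is a genuine gap, not a technicality. After forcing the equalities $C_{\rm min}(m;c)=C_{\rm min}(e_0;c)$ and $C_{\rm min}(M;c)=C_{\rm min}(e_1;c)$, eliminating the $S_0$-nodes by \emph{free} partial minimization is not justified: an intermediate $S_0$-pattern $v_i$ with $S_1$-part equal to $e_1|_{S_1}$ may satisfy $\rho(v_i)=e_0$, so retractability only bounds $C_{\rm min}(v)$ below by $f(y)-\kappa$ for a \emph{different} assignment $y$ (with $y_i=0$), which can be strictly less than $f(x)-\kappa$; submodularity in the $S_0$-block likewise does not force intermediate values to stay above the common endpoint value. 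The paper avoids this entirely: it first proves (Claim~\ref{cl:sigma}) that $g$ restricted to the blocks $\{m,M\}$ \emph{already} represents $f$, i.e.\ one may replace $\sigma$ by $\sigma'$ with $\sigma'(0)=m\le M=\sigma'(1)$ comparable, and only then links each block to the variable $x_i$ via the hard-constraint gadget $h(v,x)$ taking value $0$ on $\{(\sigma'(0),0),(\sigma'(1),1)\}$ and $+\infty$ elsewhere. That gadget is submodular (network representable) precisely because $\sigma'(0)\le\sigma'(1)$, and it pins each block exactly, so no minimization over unconstrained patterns occurs. Note also that your per-block equalities hold only for contexts $c$ in $E^{n-1}$ and therefore do not iterate block-by-block to give $f(x)=g(\sigma'(x))$; the paper's Claim~\ref{cl:sigma} changes all blocks simultaneously (meeting with the all-$\sigma(1)$ and joining with the all-$\sigma(0)$ tuples) for exactly this reason.

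The ``moreover'' part is also left unfinished: you correctly observe that the witness must be monotone, non-submodular, and (since $x_1\wedge x_2$ is representable, as in Lemma~\ref{lem:first subsetneq}) at least ternary, but you produce neither the function nor the certificate. The paper takes $f(1,1,1)=1$ and $f=0$ elsewhere, reduces via Theorem~\ref{thm:{0,1}eqExpressive} to excluding membership in $\average{\Gamma_{{\rm sub},2}}^2_{(\rho^*,\sigma^*_1)}$ alone, and certifies this not by a weighted polymorphism but by a lattice-closure argument: any submodular $g$ vanishing on the seven encoded points of $f^{-1}(0)$ must vanish on the $\{\wedge,\vee\}$-closure of that set, which is shown to contain the encoding $(1,0,1,0,1,0)$ of $(1,1,1)$, a contradiction. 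Both of these missing pieces require real content, so the proposal as it stands does not constitute a proof.
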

The minimization of a monotone function is trivial.
Therefore it is sufficient only to consider $(1, {\rm id}, {\rm id})$-network representability (original network representability) for functions on $\{0,1\}^n$.
	Here note that the sum of $(1,{\rm id}, {\rm id})$-network representable function $f_1$ and monotone function $f_2$ is not always network representable for some $k, \rho, \sigma$,
since $f_1$ and $f_2$ might use different $k, \rho, \sigma$ for a representation.

We give a more precise structure of network representable functions on $\{0,1\}^n$.
Let $\sigma^*_1 : \{0,1\} \rightarrow \{0,1\}^2$, $\sigma^*_2 : \{0,1\} \rightarrow \{0,1\}^2$, and $\rho^* : \{0,1\}^2 \rightarrow \{0,1\}^2$ be maps defined by
\begin{align*}
\sigma^*_1(x) =
\begin{cases}
(1, 0) & \text{if $x = 1$}, \\
(0, 1) & \text{if $x = 0$},
\end{cases}
\quad
\sigma^*_2(x) =
\begin{cases}
(0,1) & \text{if $x = 1$}, \\
(1,0) & \text{if $x = 0$},
\end{cases}
\quad
\rho^*(x) =
\begin{cases}
(1, 0) & \text{if $x = (1, 0)$}, \\
(0, 1) & \text{otherwise}.
\end{cases}
\end{align*}
Then the following holds:
\begin{thm}\label{thm:{0,1}eq}
	A function $f$ on $\{0,1\}^n$ is $(k,\rho, \sigma)$-network representable for some $k, \rho, \sigma$ if and only if
	$f$ is $(1, {\rm id}, {\rm id})$-network representable, $(2, \rho^*, \sigma^*_1)$-network representable, or $(2, \rho^*, \sigma^*_2)$-network representable.
\end{thm}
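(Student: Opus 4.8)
The plan is to prove the two inclusions separately, putting essentially all the work into the ``only if'' direction. For ``if'', I would simply observe that $\rho^*$ is a retraction of $\{0,1\}^2$ onto $E := \{(1,0),(0,1)\}$ and that $\sigma^*_1,\sigma^*_2$ are bijections from $\{0,1\}$ onto $E$; hence each of the three named notions is, verbatim, a $(k,\rho,\sigma)$-network representability for an explicit choice of $(k,\rho,\sigma)$, and nothing more needs to be checked.

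For ``only if'', I would first apply Theorem~\ref{thm:{0,1}main} to dispose of the $(1,{\rm id},{\rm id})$ case---in which $f$ is already one of the three listed types---and concentrate on the remaining (monotone) case. The key structural remark is that, because $|D| = 2$, the bijection $\sigma$ has a two-element image $E = \{e_0,e_1\} \subseteq \{0,1\}^k$ with $e_0 = \sigma(0)$ and $e_1 = \sigma(1)$; thus a representation encodes each variable $x_i$ merely as a toggle of its $k$ nodes $i^1,\dots,i^k$ between the two fixed patterns $e_0$ and $e_1$. I would then classify each coordinate $j \in [k]$ by the pair $((e_0)_j,(e_1)_j) \in \{0,1\}^2$ into \emph{constant} (when $(e_0)_j = (e_1)_j$), \emph{positive} (the pattern $(0,1)$, encoding $x_i$ directly as in Kolmogorov--Zabih), and \emph{negative} (the pattern $(1,0)$, encoding $1 - x_i$).

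The heart of the argument is a reduction of $k$ to at most two. A constant coordinate is a node pinned to one side of the cut on every input in $E^n$; using the retractability inequality I would show that such a node can be merged into $s$ or into $t$ without changing $C_{\rm min}$ on $E^n$, hence without changing $f$. Next, since all positive nodes of a variable carry identical information, as do all negative nodes, I would collapse them so that each variable keeps at most one positive and at most one negative node, landing on $k \le 2$. If only positive (resp.\ only negative) nodes survive, the representation is an ordinary Kolmogorov--Zabih one, so $f \in \average{\Gamma_{\rm sub,2}}$ by Lemma~\ref{lem:Gamma_sub2}; for the negative case I would invoke closure of $(1,{\rm id},{\rm id})$-representability under complementing variables, which preserves submodularity. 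If both a positive and a negative node survive, ordering them as (positive, negative) makes $\sigma = \sigma^*_1$, and the induced retraction $\rho'$ of $\{0,1\}^2$ onto $E$ is one of the four retractions fixing $(1,0)$ and $(0,1)$. Two of these factor through a single node and again yield a $(1,{\rm id},{\rm id})$-representation; the remaining two are $\rho^*$ and its coordinate-swap, and since swapping the two nodes carries the swapped retraction paired with $\sigma^*_1$ to $\rho^*$ paired with $\sigma^*_2$, the function is $(2,\rho^*,\sigma^*_1)$- or $(2,\rho^*,\sigma^*_2)$-network representable.

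The main obstacle will be the two merging steps: I must verify that eliminating constant nodes and collapsing duplicate positive/negative nodes preserves \emph{both} the represented function (equality of $C_{\rm min}$ on $E^n$) \emph{and} the retractability property with respect to the induced retraction. This is delicate because $\rho$ is an arbitrary retraction onto a two-element set---not necessarily coordinatewise---so I must track carefully how it descends to the reduced node set and confirm that it normalizes to the canonical $\rho^*$. Handling the auxiliary nodes, which interact with each merge, is the accompanying technical nuisance.
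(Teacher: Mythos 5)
Your proposal is correct, and its core construction coincides with the paper's: the paper likewise classifies the coordinates $j\in[k]$ by the pair $\left((\sigma(0))_j,(\sigma(1))_j\right)$ into constant sets $S_0,S_1$ and ``positive''/``negative'' sets $A,B$, pins the constant coordinates and ties all $A$- (resp.\ $B$-) coordinates to a single representative $j_0$ (resp.\ $k_0$) by infinite-capacity fixing/equality gadgets $h_0,h_1,h_2\in\average{\Gamma_{\rm sub,2}}$, and partially minimizes to get a function $g''$ on $\{0,1\}^{2n}$. The ``delicate'' point you flag—that retractability descends—is handled exactly as you anticipate: on the constrained domain each variable's pattern lies in $\{\sigma(0),\sigma(1),\sigma(0)\wedge\sigma(1),\sigma(0)\vee\sigma(1)\}$, on which $\rho$ induces one of the four retractions of $\{0,1\}^2$ onto $\{(1,0),(0,1)\}$; and working inside the weighted relational clone (gadgets plus partial minimization) rather than by literal network surgery makes the auxiliary-node bookkeeping automatic. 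Where you genuinely diverge is in the order of operations and in the treatment of the two ``separating'' retractions. The paper performs the coordinate reduction only when $\rho(\sigma(0)\wedge\sigma(1))=\rho(\sigma(0)\vee\sigma(1))$, having already disposed of the other cases at general $k$ inside the proof of Theorem~\ref{thm:{0,1}main}: the comparable case $\sigma(0)\le\sigma(1)$ via a direct decoding gadget, and the separating cases via Claim~\ref{cl:sigma}, a chain of submodularity-plus-retractability inequalities forced into equalities that lets one re-encode with $\sigma'(0)=\sigma(0)\wedge\sigma(1)$ and $\sigma'(1)=\sigma(0)\vee\sigma(1)$. You instead reduce uniformly to $k\le 2$ first and then observe that the separating retractions of $\{0,1\}^2$ factor through a single coordinate, so the second node becomes an auxiliary node and $f$ is a partial minimization of a cut function; this is a simpler and perfectly valid replacement for Claim~\ref{cl:sigma}, and it unifies the paper's two gadget constructions into one. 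The trade-off is that the paper's arrangement keeps the $(1,{\rm id},{\rm id})$ cases self-contained in Proposition~\ref{prop:mono}, which it needs independently for Theorem~\ref{thm:{0,1}mainExpressive}.
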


We next present network nonrepresentability results for functions on $D^n$,
and in particular, $k$-submodular functions.
These results will be proved via the theory of expressive power.
We have seen in Theorem~\ref{thm:ishii} that all binary bisubmodular functions are $(2, \rho_2, \sigma_2)$-network representable.
We show that the same property does not hold for ternary bisubmodular functions.
\begin{thm}\label{thm:3ary2sub}
	Some ternary bisubmodular function is not $(2, \rho_2, \sigma_2)$-network representable.
\end{thm}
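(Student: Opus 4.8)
The plan is to reduce the claim to the expressive-power criterion of Lemma~\ref{lem:exp chara}, exactly as \v{Z}ivn\'y--Cohen--Jeavons did for $\average{\Gamma_{\rm sub,2}}$. First I would establish the analogue of Lemma~\ref{lem:Gamma_sub2}: the set of $(2,\rho_2,\sigma_2)$-network representable functions coincides with $\average{\Gamma}$ for an explicit finite language $\Gamma$ of binary cost functions on $D=\{0,-1,1\}$. Concretely, each extremal $(2,\rho_2)$-retractable four-node network in Fig.~\ref{fig:2_rho_2sub_network}, read off through $\sigma_2$ on both variables, yields one binary bisubmodular function; let $\Gamma$ be the resulting list together with the weighted equality relation. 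The point that makes this a clean identification is that the retractability condition forces the partial minimization over any auxiliary node-pair $j^1,j^2$ to behave as minimization over a single $D$-valued variable (patterns outside the image of $\sigma_2$ never strictly help), so the class is closed under addition (Property~2), scaling, constants, and partial minimization, i.e.\ it is a weighted relational clone on $D$. Since every binary bisubmodular function is $(2,\rho_2,\sigma_2)$-network representable by Theorem~\ref{thm:ishii}, and each generator is itself binary bisubmodular, one obtains $\average{\Gamma}=\average{\Gamma_{\rm bisub,2}}$, so it suffices to produce a ternary bisubmodular $f$ with $f\notin\average{\Gamma_{\rm bisub,2}}$.

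Second, I would exhibit a concrete candidate. I would take a ternary bisubmodular $f:\{0,-1,1\}^3\to\overline{\mathbf{Q}}$ — the bisubmodular counterpart of the $4$-ary submodular separating function of~\cite{DAM/ZCJ09}, chosen with a large (ideally full) domain so that the multimorphism $\langle\sqcap,\sqcup\rangle$ is tight on few tuples — and apply Lemma~\ref{lem:exp chara} with $\Gamma=\Gamma_{\rm bisub,2}$. This requires (a) computing ${\rm Pol}^{(m)}(\Gamma_{\rm bisub,2})$, i.e.\ the $m$-ary operations $\varphi$ on $\{0,-1,1\}$ that preserve the domain of every binary bisubmodular function, for some small arity $m$ (plausibly $m=2$ or $m=3$), and (b) finding a weighted polymorphism $\omega\in{\rm wPol}^{(m)}(\Gamma_{\rm bisub,2})$ together with tuples $x^1,\dots,x^m\in{\rm dom}\,f$ such that $\sum_{\varphi}\omega(\varphi)f(\varphi(x^1,\dots,x^m))>0$. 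Because the defining conditions on $\omega$ (zero sum, negativity only on projections, nonpositivity on every generator of $\Gamma$) are linear, both the feasible set of $\omega$ and the separating inequality can be cast as a single linear-programming feasibility problem over the finitely many polymorphisms; a violating pair $(\omega,x^1,\dots,x^m)$ then certifies $f\notin\average{\Gamma_{\rm bisub,2}}=\average{\Gamma}$, and hence non-representability.

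The main obstacle is step~(a) together with the search in step~(b): the polymorphism lattice of $\Gamma_{\rm bisub,2}$ over a three-element domain is far richer than that of $\Gamma_{\rm sub,2}$ over $\{0,1\}$ — there are many domain-preserving operations beyond the projections and $\sqcap,\sqcup$ — so enumerating ${\rm Pol}^{(m)}$ and certifying that a proposed $\omega$ is nonpositive on \emph{all} binary bisubmodular generators (not merely on the listed extremal functions) is delicate, and I expect to carry it out with computer assistance, as was already used to generate Fig.~\ref{fig:2_rho_2sub_network}. A secondary subtlety is verifying the first-paragraph identification rigorously: one must check that the extremal list of four-node networks is complete, that each associated function is genuinely bisubmodular (giving $\average{\Gamma}\subseteq\average{\Gamma_{\rm bisub,2}}$), and that infinite-capacity edges (which encode equality-type constraints) do not enlarge $\average{\Gamma}$. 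This is routine given the polyhedral-cone description of $(n,\rho_2)$-retractable networks in the Example, but it must be done before the expressive-power machinery can be invoked.
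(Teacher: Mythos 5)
Your overall strategy---apply Lemma~\ref{lem:exp chara} to a suitable language on $\{0,-1,1\}$---is the right one, but the reduction in your first paragraph has a genuine gap, and it is the step that carries all the weight. To deduce non-representability from $f\notin\average{\Gamma}$ you need the inclusion $\{(2,\rho_2,\sigma_2)\text{-network representable functions}\}\subseteq\average{\Gamma}$, and your argument only delivers the opposite one: closure of the representable class under the clone operations, plus the fact that each extremal four-node network induces a binary bisubmodular function, plus Theorem~\ref{thm:ishii}, gives $\average{\Gamma}=\average{\Gamma_{\rm bisub,2}}\subseteq\{(2,\rho_2,\sigma_2)\text{-representable}\}$. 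For the converse you would need every $(n,\rho_2)$-retractable network (with $n\geq 3$ variable pairs and possibly extra nodes) to decompose into four-node extremal pieces that are themselves retractable. The cone description in the Example covers only $n=2$ with no extra nodes; for larger networks the retraction inequality is a \emph{global} condition on the sum of edge terms, and the individual edge term between pairs $i$ and $j$ need not satisfy it, so the induced binary pieces need not be bisubmodular or lie in your $\Gamma$. Note also that $\{(2,\rho_2,\sigma_2)\text{-representable}\}\subseteq\average{\Gamma_{\rm bisub,2}}$ together with your step two would prove $\Gamma_{\rm bisub,3}\not\subseteq\average{\Gamma_{\rm bisub,2}}$, a strictly stronger statement than the theorem; nothing in the sketch supplies it.

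The paper sidesteps this entirely. By the lemma identifying the representable class with $\average{\Gamma_{\rm sub,2}}^2_{(\rho_2,\sigma_2)}\subseteq\average{\Gamma_{\rm sub}}^2_{(\rho_2,\sigma_2)}$ and Theorem~\ref{thm:ExtendExpressive}, the latter is a weighted relational clone on $\{0,-1,1\}$, and Lemma~\ref{lem:exp chara} is applied to \emph{that} language (this is Theorem~\ref{thm:3ary2subExpressive}). The weighted polymorphism is $4$-ary, with non-projection operations of the form $\sigma_2\circ\rho_2\circ(\text{lattice term})\circ\sigma_2^{-1}$, and the defining inequality is verified on the Boolean side using only submodularity of $g$ on $\{0,1\}^{2n}$ and the retraction inequality $g(v)\geq g(\rho_2(v))$---no decomposition into binary pieces and no enumeration of ${\rm Pol}(\Gamma_{\rm bisub,2})$ is needed. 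The separating function is explicit: $f(0,0,0)=-1$, $f(0,1,1)=f(1,0,1)=f(1,1,0)=1$, $f(1,1,1)=2$, and $f=0$ elsewhere. To salvage your plan, replace $\Gamma_{\rm bisub,2}$ by $\average{\Gamma_{\rm sub}}^2_{(\rho_2,\sigma_2)}$ as the target language; your LP search for a violating pair $(\omega,x^1,\dots,x^m)$ then becomes essentially the paper's computation.
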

We also know that all binary basic $k$-submodular functions are $(k, \rho_k, \sigma_k)$-network representable~\cite{SICOMP/IWY16},
and their sum is efficiently minimizable.
A natural question raised by~\cite{SICOMP/IWY16} is whether all binary $k$-submodular functions are $k$-network representable or not.
We answer this question negatively.
\begin{thm}\label{thm:2aryksub}
	Some binary $k$-submodular function is not $(k, \rho_k, \sigma_k)$-network representable for all $k \geq 3$.
\end{thm}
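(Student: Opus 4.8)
The plan is to route the non-representability through the expressive power theory, exactly as announced for the strengthened form (Theorem~\ref{thm:2aryksubExpressive}). By the compatibility of our framework with expressive power developed in Section~\ref{subsec:VCSP perspective}, being $(k,\rho_k,\sigma_k)$-network representable is equivalent to lying in $\langle \Gamma_k\rangle$ for a concrete language $\Gamma_k$ on $[0,k]$, namely the language whose expressive power (via the extended interpretation attached to $\sigma_k,\rho_k$) equals the class of $(k,\rho_k,\sigma_k)$-network representable functions. By the Iwata--Wahlstr{\"o}m--Yoshida construction the natural generators of $\Gamma_k$ are the basic $k$-submodular functions, and Lemma~\ref{lem:IWY} guarantees $\langle \Gamma_k\rangle \subseteq \Gamma_{k{\rm sub}}$. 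Thus it suffices to produce, for every $k\ge 3$, a binary $k$-submodular function $f \in \Gamma_{k{\rm sub},2}$ with $f \notin \langle \Gamma_k\rangle$. It is essential here to separate $f$ from the \emph{strictly smaller} language $\Gamma_k$ rather than from all of $\Gamma_{k{\rm sub}}$: since $f$ is itself $k$-submodular it lies trivially in $\langle\Gamma_{k{\rm sub}}\rangle$, so no weighted polymorphism of $\Gamma_{k{\rm sub}}$ could certify anything, whereas $\Gamma_k$ has a correspondingly richer set of (weighted) polymorphisms.

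Given this reduction, the tool is Lemma~\ref{lem:exp chara}: it suffices to exhibit an arity $m$, a weighted polymorphism $\omega \in {\rm wPol}^{(m)}(\Gamma_k)$, and tuples $x^1,\dots,x^m \in {\rm dom}\ f$ with $\sum_{\varphi} \omega(\varphi) f(\varphi(x^1,\dots,x^m)) > 0$. The first step is therefore to determine enough of ${\rm Pol}^{(m)}(\Gamma_k)$. Since $\Gamma_k \subseteq \Gamma_{k{\rm sub}}$ and ${\rm dom}\ g$ is closed under $\sqcap,\sqcup$ for every $k$-submodular $g$, we have $\{\sqcap,\sqcup\}\subseteq{\rm Pol}(\Gamma_{k{\rm sub}})\subseteq{\rm Pol}(\Gamma_k)$; the operations that can witness the separation are precisely the \emph{extra} ones that preserve ${\rm dom}\ g$ for every basic $k$-submodular $g$ but are not shared by all $k$-submodular functions. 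I would isolate one such small-arity operation $\varphi^\ast$ (with $m=2$ or $3$) by inspecting the domains of the basic functions, and assemble $\omega$ with negative weight on the projections and positive weights on $\varphi^\ast,\sqcap,\sqcup$, chosen so that the three defining conditions of a weighted polymorphism hold on all of $\Gamma_k$.

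For the explicit example I would first settle $k=3$, where $f$ lives on the $4\times 4$ grid $[0,3]^2$ and the whole question is a finite rational linear program: membership $f\in\langle\Gamma_3\rangle$ is feasibility in the cone generated by the basic functions under the clone operations, and its infeasibility is certified, by Farkas duality, exactly by a violating $\omega$ of the kind above. So I would search (by hand, guided by the description of ${\rm Pol}(\Gamma_3)$, or by a small LP) for a $3$-submodular $f$ on $[0,3]^2$ together with $\omega$ and witness tuples realizing a strictly positive value; checking that $f$ satisfies all $\sqcap,\sqcup$-inequalities confirms $f\in\Gamma_{3{\rm sub},2}$. To cover all $k\ge 3$, I would place $f$ on $[0,k]^2$ supported essentially on the labels $\{0,1,2\}$ (assigning the remaining labels $+\infty$) and reuse the same $\omega$: the separation is an interaction among three labels and is insensitive to the presence of additional labels, so the analogue of $\varphi^\ast$ and the same weights continue to give a weighted polymorphism of $\Gamma_k$ violating the inequality on the same tuples.

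The main obstacle is constructing and validating $\omega$ on the entire infinite language $\Gamma_k$: the third condition $\sum_\varphi \omega(\varphi)\,g(\varphi(\cdot))\le 0$ must hold for \emph{every} $g\in\Gamma_k$, which I would reduce to verifying it on the generators, i.e.\ the basic $k$-submodular functions, together with confirming that $\varphi^\ast$ genuinely preserves all of their domains and that a single choice works simultaneously for every $k\ge 3$. This case analysis is the delicate part. By contrast, once $\omega$ and the witness tuples are fixed, evaluating $\sum_\varphi \omega(\varphi)\,f(\varphi(x^1,\dots,x^m))$ for the target $f$ and checking positivity is a routine finite computation, after which Lemma~\ref{lem:exp chara} yields $f\notin\langle\Gamma_k\rangle$ and hence the claimed non-representability.
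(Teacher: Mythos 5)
Your overall strategy (apply Lemma~\ref{lem:exp chara} with a weighted polymorphism of a language on $[0,k]$ whose expressive power captures $(k,\rho_k,\sigma_k)$-network representability) is the right one and is the route the paper takes via Theorem~\ref{thm:2aryksubExpressive}. But your concrete reduction has a gap that inverts a containment. You take $\Gamma_k$ to be generated by the basic $k$-submodular functions of Iwata--Wahlstr{\"o}m--Yoshida and plan to (a) verify the weighted-polymorphism condition only on these generators and (b) conclude non-representability from $f\notin\average{\Gamma_k}$. The basic functions are merely \emph{examples} of $(k,\rho_k,\sigma_k)$-network representable functions; nothing in the paper (or in~\cite{SICOMP/IWY16}) asserts that they generate the whole representable class. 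Since $\average{\Gamma_k}$ could be a proper subset of the representable functions, showing $f\notin\average{\Gamma_k}$ does not show $f$ is not representable --- the implication you need goes the other way. The correct language to work with is $\average{\Gamma_{\rm sub}}^k_{(\rho_k,\sigma_k)}$ itself (a weighted relational clone by Theorem~\ref{thm:ExtendExpressive}, which contains all $(k,\rho_k,\sigma_k)$-network representable functions), and for it no finite generating set on the $[0,k]$ side is available, so your proposed ``check the inequality on the generators'' step has nothing to stand on.

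The paper resolves this by verifying the weighted-polymorphism condition on the Boolean side: every $h\in\average{\Gamma_{\rm sub}}^k_{(\rho_k,\sigma_k)}$ arises from a submodular $g$ on $\{0,1\}^{kn}$ with $g(v)\geq g(\rho_k(v))$, and the required inequality for $h$ is obtained by chaining submodular inequalities for $g$ with the retraction inequality. This is also where your arity estimate is optimistic: the certificate the paper uses is an $8$-ary weighted polymorphism with sixteen non-projection operations built from nested $\wedge$, $\vee$, $\rho_k$, applied to the function $f(0,0)=-1$, $f(1,1)=1$, $f=0$ otherwise, with witness tuples supported on the labels $\{1,2,3\}$ (which is how one certificate covers all $k\geq 3$, without your $+\infty$ padding). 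A $2$- or $3$-ary $\varphi^{\ast}$ of the kind you describe is unlikely to suffice. If you replace your $\Gamma_k$ by $\average{\Gamma_{\rm sub}}^k_{(\rho_k,\sigma_k)}$ and carry out the verification on the $\{0,1\}$ side, the remainder of your plan (a Farkas/LP search for $\omega$ and the tuples, then Lemma~\ref{lem:exp chara}) matches the paper.
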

Theorems~\ref{thm:{0,1}main}, \ref{thm:{0,1}eq}, \ref{thm:3ary2sub}, and \ref{thm:2aryksub} are consequences of Theorems~\ref{thm:{0,1}mainExpressive}, \ref{thm:{0,1}eqExpressive}, \ref{thm:3ary2subExpressive}, and \ref{thm:2aryksubExpressive} in the next subsection.

\subsection{Extended expressive power}\label{subsec:VCSP perspective}
In order to incorporate the theory of expressive power into our framework, we introduce a way of handling languages on $D$ from a language $\Gamma$ on another  domain $F$,
which generalizes previous arguments.
Let $k$ be a positive integer with $|D| \leq |F|^k$.
Let $E$ be a subset of $F^k$ with $|E| = |D|$,
$\rho : F^k \rightarrow E$ a retraction,
and $\sigma : D \rightarrow E$ a bijection.
We define $\average{\Gamma}^k$ by
\begin{align*}
\average{\Gamma}^k := \{ f \in \average{\Gamma} \mid \text{The arity $r_f$ of $f$ is a multiple of $k$}\}.
\end{align*}
Regard $\average{\Gamma}^k$ as a language on $F^k$; recall that $k$ is the arity of $\rho$.
A function $f$ is {\it representable by $(\Gamma, \rho, \sigma)$} if there exists $g \in \average{\Gamma}^k$ satisfying $g(\rho(v)) \leq g(v)$ for all $v \in \textrm{dom }g$ and
\begin{align*}
 f(x_1,x_2,\dots,x_n) = g(\sigma(x_1), \sigma(x_2), \dots,\sigma(x_n)) \qquad (x_1,x_2,\dots,x_n \in D).
\end{align*}
We define a language $\average{\Gamma}^k_{(\rho, \sigma)}$ on $D$ as the set of functions representable by $(\Gamma, \rho, \sigma)$.

By comparing these notions to our network representations,
we obtain a generalization of Lemma~\ref{lem:Gamma_sub2}.
\begin{lem}
	The set of $(k,\rho,\sigma)$-network representable functions coincides with $\average{\Gamma_{{\rm sub},2}}^k_{(\rho, \sigma)}$.
\end{lem}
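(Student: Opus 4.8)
The plan is to prove the two inclusions by unwinding both definitions and using Lemma~\ref{lem:Gamma_sub2} as the bridge between networks and the clone $\average{\Gamma_{\rm sub,2}}$. The key observation is that, for an $(n,\rho)$-retractable network $G$ with constant $\kappa$, the function $C_{\rm min}$ on $\{0,1\}^{kn}$ is exactly the original (Kolmogorov--Zabih) network representation associated to $G$ when the $kn$ nodes $\{i^l\}$ are taken as the visible variables and all remaining nodes are hidden and partially minimized out. Hence by Lemma~\ref{lem:Gamma_sub2}, $g := C_{\rm min}+\kappa$ lies in $\average{\Gamma_{\rm sub,2}}$, and since its arity $kn$ is a multiple of $k$, it lies in $\average{\Gamma_{\rm sub,2}}^k$. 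Throughout I would regard this arity-$kn$ cost function as a function on $(\{0,1\}^k)^n$ by grouping its coordinates into $n$ blocks of size $k$, identifying block $i$ with the node group $i^1,\dots,i^k$, so that the blockwise extension of $\rho$ and the map $\sigma$ act as intended.

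For the forward inclusion I would start from a $(k,\rho,\sigma)$-network representable $f$, realized by $G$ and $\kappa$, and set $g := C_{\rm min}+\kappa$. Then the representing identity $f(x)=C_{\rm min}(\sigma(x_1),\dots,\sigma(x_n))+\kappa$ becomes precisely $f(x)=g(\sigma(x_1),\dots,\sigma(x_n))$. It remains to translate retractability of $G$ into the required retraction inequality on $g$: reading the defining inequality $C_{\rm min}(x)\ge C_{\rm min}(\rho(x))$ (with $\rho$ applied blockwise) for every $v\in\mathrm{dom}\,g$ gives exactly $g(\rho(v))\le g(v)$, since the additive constant $\kappa$ cancels. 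Thus $g$ witnesses $f\in\average{\Gamma_{\rm sub,2}}^k_{(\rho,\sigma)}$.

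For the reverse inclusion I would run the same argument backwards. Given $g\in\average{\Gamma_{\rm sub,2}}^k$ of arity $kn$ with $g(\rho(v))\le g(v)$ on $\mathrm{dom}\,g$ and $f=g\circ\sigma$, Lemma~\ref{lem:Gamma_sub2} produces a network $G$ and a constant $\kappa$ with $g=C_{\rm min}+\kappa$, where I relabel the $kn$ visible variables as the nodes $i^l$ (so $V\supseteq\{s,t\}\cup\{i^l\}$). The retraction inequality on $g$ then yields $C_{\rm min}(\rho(v))\le C_{\rm min}(v)$ for $v\in\mathrm{dom}\,C_{\rm min}$, while outside the domain $C_{\rm min}=+\infty$ makes the inequality hold vacuously; hence $G$ is $(n,\rho)$-retractable. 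The identity $f=g\circ\sigma$ then gives the required representation of $f$ with the same $\kappa$.

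The argument is essentially a dictionary translation, so the only genuine care — and where I expect to spend most of the effort — is the bookkeeping around infinite values and the block structure. Specifically, I would verify that the retractability condition stated ``for all $x\in\{0,1\}^{kn}$'' and the clone-side condition stated ``for all $v\in\mathrm{dom}\,g$'' truly coincide, which hinges on the remark that patterns outside $\mathrm{dom}\,C_{\rm min}$ contribute $+\infty$ and therefore satisfy the inequality trivially; and that the identification of an arity-$kn$ cost function with a function on $(\{0,1\}^k)^n$ lines up coordinate blocks with node groups consistently with the extension of $\rho$ to $E^n$ and with the application of $\sigma$ coordinatewise.
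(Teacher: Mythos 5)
Your proof is correct and follows exactly the route the paper intends: the paper states this lemma without proof as a direct comparison of the two definitions via Lemma~\ref{lem:Gamma_sub2}, and your write-up is a faithful elaboration of that dictionary translation, including the one genuinely delicate point (that the retractability inequality quantified over all of $\{0,1\}^{kn}$ and the clone-side inequality quantified over $\mathrm{dom}\,g$ agree because patterns outside the domain give $C_{\rm min}=+\infty$).
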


The following theorem enables us to deal with our network representability on $D^n$ from the theory of expressive power.
\begin{thm}\label{thm:ExtendExpressive}
	For a language $\Gamma$ on $F$,
	$\average{\Gamma}^k_{(\rho, \sigma)}$ is a weighted relational clone on $D$.
\end{thm}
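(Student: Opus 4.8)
The plan is to verify directly the four defining closure properties of a weighted relational clone on $D$ for $\average{\Gamma}^k_{(\rho,\sigma)}$: that it contains the weighted equality relation $f^{D}_{=}$, and that it is closed under nonnegative scaling plus translation, under addition, and under partial minimization. In every case the strategy is the same: given functions in $\average{\Gamma}^k_{(\rho,\sigma)}$ together with their witnesses $g \in \average{\Gamma}^k$ (functions on $F$ whose arity is a multiple of $k$, read block-wise as functions on $F^k$), I would lift the required $D$-level operation to the corresponding operation on the witnesses, check that the lifted witness still lies in $\average{\Gamma}^k$, and verify that it still satisfies the retraction inequality $g(\rho(v)) \le g(v)$ and reproduces the target function through $\sigma$. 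Membership of the lifted witnesses in $\average{\Gamma}$ will always follow from the fact that $\average{\Gamma}$ is itself a weighted relational clone on $F$; the only extra bookkeeping is to keep the arity a multiple of $k$.

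For the weighted equality $f^{D}_{=}$, take $g(u,w) := \sum_{l=1}^{k} f^{F}_{=}(u_l,w_l)$ for $u=(u_1,\dots,u_k),\, w=(w_1,\dots,w_k) \in F^k$; this is a sum of copies of $f^{F}_{=} \in \average{\Gamma}$, hence lies in $\average{\Gamma}$, and has arity $2k$, so $g \in \average{\Gamma}^k$. Since $\sigma$ is injective, $g(\sigma(x),\sigma(y))$ is $0$ iff $x=y$ and $+\infty$ otherwise, matching $f^{D}_{=}$; and since $\mathrm{dom}\,g = \{(u,u)\}$ while $\rho$ is a retraction, $g(\rho(v)) = g(v)$ there, so the retraction inequality holds. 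Closure under $\alpha(\cdot)+\beta$ is immediate: if $g$ witnesses $f$, then $\alpha g + \beta$ witnesses $\alpha f + \beta$, stays in $\average{\Gamma}^k$ (unchanged arity), and for $\alpha \ge 0$ the inequality $\alpha g(\rho(v))+\beta \le \alpha g(v)+\beta$ is preserved. The degenerate case $\alpha=0$ I would handle separately by exhibiting a finite constant witness on $F^k$ — available in $\average{\Gamma}$ as a translate of the all-zero unary function obtained by partially minimizing $f^{F}_{=}$ — which satisfies the retraction inequality with equality. For addition of $f_1,f_2$ along index maps $s_1,s_2$, I would lift each $s_j$ to a block-wise index map $\tilde s_j$ on $F$-coordinates (sending coordinate $l$ of block $p$ to coordinate $l$ of block $s_j(p)$) and let $g$ be the corresponding addition of $g_1,g_2$, which lies in $\average{\Gamma}^k$ with arity $kn$. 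The functional identity holds block-by-block, and the retraction inequality is preserved because $\rho$ acts block-wise and hence commutes with the block-wise index maps: for $v \in \mathrm{dom}\,g$ both summands are finite, so the retraction inequalities for $g_1$ and $g_2$ simply add.

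The main obstacle — and the one step where the retraction condition is genuinely used rather than merely carried along — is partial minimization. Given $f$ of arity $n+m$ with witness $g$, the natural candidate witness for $h(x_1,\dots,x_n)=\min_{x_{n+1},\dots,x_{n+m}} f(x)$ is the partial minimization $g'(u_1,\dots,u_n) := \min_{u_{n+1},\dots,u_{n+m}\in F^k} g(u_1,\dots,u_{n+m})$, which lies in $\average{\Gamma}^k$. The difficulty is that $g'$ minimizes each extra block over all of $F^k$, whereas $h$ minimizes $f$ only over $D$, i.e. over blocks in $E=\sigma(D)$. These agree precisely because of the retraction inequality: fixing the first $n$ blocks at values $\sigma(x_i)\in E$, which $\rho$ leaves unchanged, and applying $\rho$ to the extra blocks does not increase $g$; hence minimizing over $F^k$ equals minimizing over $E$, and since $F^k$ is finite the minima are attained, making the identity exact. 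This yields $g'(\sigma(x_1),\dots,\sigma(x_n)) = h(x)$.

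Finally I would check the retraction inequality for $g'$. For $v' \in \mathrm{dom}\,g'$ choose a minimizer $u^{*}$ with $g(v',u^{*}) = g'(v') < +\infty$, so $(v',u^{*}) \in \mathrm{dom}\,g$. Using that $\rho$ acts block-wise, the retraction inequality for $g$ gives the chain
$g'(\rho(v')) \le g(\rho(v'),\rho(u^{*})) = g(\rho(v',u^{*})) \le g(v',u^{*}) = g'(v')$,
where the first inequality is the definition of $g'$ as a minimum (instantiated at $\rho(u^{*})$). This establishes the retraction inequality for $g'$, and with it all four closure properties, proving that $\average{\Gamma}^k_{(\rho,\sigma)}$ is a weighted relational clone on $D$.
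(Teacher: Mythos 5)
Your proof is correct and takes essentially the same route as the paper's: the same witness for the weighted equality relation built from $k$ copies of the equality relation on $F$, the same lifted witnesses for scaling, addition, and partial minimization, and the same key observation that the retraction inequality is exactly what makes minimization of the witness over all of $F^k$ agree with minimization of $f$ over $D$ (i.e.\ over $E$) in the partial-minimization step. The additional care you take with the $\alpha=0$ case and with the minimizer-based verification of the retraction inequality for the partially minimized witness is consistent with, and slightly more explicit than, the paper's argument.
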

The proof of Theorem~\ref{thm:ExtendExpressive} is given in Section~\ref{subsec:thm:ExtendExpressive}.

Let $\Gamma$ be a language on $F$.
A function $f$ on $D$ is called {\it representable by $\Gamma$} if $f \in \average{\Gamma}^k_{(\rho, \sigma)}$ for some positive integer $k$, $\rho : F^k \rightarrow E$, and $\sigma : D \rightarrow E$.
The set of cost functions on $D$ representable by a language $\Gamma$ is denoted by $\overline{\average{\Gamma}}_D$.
Notice that $\overline{\average{\Gamma}}_D$ is not a weighted rational clone in general.
By using these notations, Theorems~\ref{thm:{0,1}main} and \ref{thm:{0,1}eq} are reformulated as follows,
since $\average{\Gamma_{\rm sub,2}}$ (resp. $\average{\Gamma_{\rm sub,2}}^k_{(\rho, \sigma)}$) is equal to the set of $(1,{\rm id}, {\rm id})$-network representable (resp. $(k,\rho, \sigma)$-network representable) functions.
Define $\Gamma_{\rm mono}$ as the set of monotone functions over $\{0,1\}$.
\begin{thm}\label{thm:{0,1}mainExpressive}
	$\average{\Gamma_{\rm sub,2}} \subsetneq \overline{\average{\Gamma_{\rm sub,2}}}_{\{0,1\}} \subsetneq \average{\Gamma_{\rm sub,2}} \cup \Gamma_{\rm mono}$.
\end{thm}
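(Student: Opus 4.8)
The plan is to derive Theorem~\ref{thm:{0,1}mainExpressive} from Theorems~\ref{thm:{0,1}main} and~\ref{thm:{0,1}eq}, together with one explicit separating example. Throughout I use that $\overline{\average{\Gamma_{\rm sub,2}}}_{\{0,1\}}$ is exactly the set of functions on $\{0,1\}$ that are $(k,\rho,\sigma)$-network representable for some parameters (by the lemma identifying $(k,\rho,\sigma)$-network representability with $\average{\Gamma_{\rm sub,2}}^k_{(\rho,\sigma)}$, unioned over all $k,\rho,\sigma$), and that $\average{\Gamma_{\rm sub,2}}$ is the class of $(1,{\rm id},{\rm id})$-network representable functions.

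First I would handle the outer inclusion. The containment $\overline{\average{\Gamma_{\rm sub,2}}}_{\{0,1\}} \subseteq \average{\Gamma_{\rm sub,2}} \cup \Gamma_{\rm mono}$ is precisely the first assertion of Theorem~\ref{thm:{0,1}main}: every $(k,\rho,\sigma)$-network representable function on $\{0,1\}^n$ is either $(1,{\rm id},{\rm id})$-network representable or monotone. Strictness is given by the ``moreover'' part of Theorem~\ref{thm:{0,1}main}, which supplies a monotone function $f$ that is not $(k,\rho,\sigma)$-network representable for any $k,\rho,\sigma$; then $f \in \Gamma_{\rm mono} \setminus \overline{\average{\Gamma_{\rm sub,2}}}_{\{0,1\}}$ witnesses $\overline{\average{\Gamma_{\rm sub,2}}}_{\{0,1\}} \subsetneq \average{\Gamma_{\rm sub,2}} \cup \Gamma_{\rm mono}$.

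Next the inner inclusion. Containment $\average{\Gamma_{\rm sub,2}} \subseteq \overline{\average{\Gamma_{\rm sub,2}}}_{\{0,1\}}$ is immediate by taking $k=1$, $\rho={\rm id}$, $\sigma={\rm id}$, since $\average{\Gamma_{\rm sub,2}}^1_{({\rm id},{\rm id})} = \average{\Gamma_{\rm sub,2}}$. For strictness I must exhibit a function that is $(k,\rho,\sigma)$-network representable but not $(1,{\rm id},{\rm id})$-network representable; by Theorem~\ref{thm:{0,1}eq} it suffices to find one that is $(2,\rho^*,\sigma^*_1)$-network representable yet not submodular, because $(1,{\rm id},{\rm id})$-network representability implies submodularity ($\average{\Gamma_{\rm sub,2}} \subseteq \Gamma_{\rm sub}$). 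A natural candidate is the binary conjunction $f(x_1,x_2) = x_1 \wedge x_2$, which is monotone nondecreasing but violates submodularity since $f(0,1)+f(1,0) = 0 < 1 = f(0,0)+f(1,1)$. Under $\sigma^*_1$ one has $X(i^1) = x_i$ and $X(i^2) = \overline{x_i}$, so a single capacity-$1$ arc from $1^1$ to $2^2$ is cut precisely when $x_1 = 1$ and $\overline{x_2} = 0$, i.e. when $x_1 = x_2 = 1$; its cut function is therefore $x_1 \wedge x_2$. I would then check directly that this network is $(2,\rho^*)$-retractable, so that $x_1 \wedge x_2$ is genuinely $(2,\rho^*,\sigma^*_1)$-network representable, placing it in $\overline{\average{\Gamma_{\rm sub,2}}}_{\{0,1\}} \setminus \average{\Gamma_{\rm sub,2}}$ and giving the strict inclusion.

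The only genuine work, and hence the main obstacle, is the construction and verification of this separating witness: confirming that a concrete non-submodular monotone function really does admit a $(2,\rho^*,\sigma^*_1)$-network representation. The idea that unlocks it is that $\sigma^*_1$ makes both $x_i$ (node $i^1$) and its complement $\overline{x_i}$ (node $i^2$) available as cut variables, so complementing a variable of a submodular cut function—an operation that produces supermodular behavior such as conjunction—becomes legal; checking the retractability inequality $C_{\rm min}(x) \ge C_{\rm min}(\rho^*(x))$ over all sixteen node-patterns of the gadget is then a short finite verification. Everything else is a direct translation of Theorems~\ref{thm:{0,1}main} and~\ref{thm:{0,1}eq} into the language of set inclusions.
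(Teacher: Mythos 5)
Your handling of the first strict inclusion is correct and essentially identical to the paper's Lemma~\ref{lem:first subsetneq}: the paper uses the very same non-submodular conjunction ($f(1,1)=1$, $f=0$ otherwise) and represents it under $(2,\rho^*,\sigma^*_1)$ by the symmetric two-arc network $\{(1^1,2^2),(2^1,1^2)\}$ with capacities $1/2$; your single arc $(1^1,2^2)$ of capacity $1$ also works, and the retractability check you defer does go through, since $C(\rho^*(x))=1$ forces $(x_1^1,x_1^2)=(x_2^1,x_2^2)=(1,0)$ and hence $C(x)=1$. (The appeal to Theorem~\ref{thm:{0,1}eq} at this point is unnecessary: being $(2,\rho^*,\sigma^*_1)$-representable already places the function in $\overline{\average{\Gamma_{\rm sub,2}}}_{\{0,1\}}$.)

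The genuine gap is in the second inclusion and its strictness, which you justify entirely by citing Theorem~\ref{thm:{0,1}main}. In this paper that theorem has no independent proof: Section~\ref{subsec: results} states explicitly that Theorems~\ref{thm:{0,1}main} and~\ref{thm:{0,1}eq} are \emph{consequences} of Theorems~\ref{thm:{0,1}mainExpressive} and~\ref{thm:{0,1}eqExpressive}, and Section~\ref{subsec:thm:{0,1}mainExpressive} obtains both versions from the same three ingredients. Deriving Theorem~\ref{thm:{0,1}mainExpressive} from Theorem~\ref{thm:{0,1}main} is therefore circular, and all of the substantive content is assumed rather than proved. Concretely, what is missing is (1) the analogue of Proposition~\ref{prop:mono}: a case analysis over $\sigma:\{0,1\}\rightarrow\{0,1\}^k$ --- according to whether $\sigma(0)$ and $\sigma(1)$ are comparable under $\wedge$, and, when they are not, according to where $\rho$ sends $\sigma(0)\wedge\sigma(1)$ and $\sigma(0)\vee\sigma(1)$ --- showing in each case that $f\in\average{\Gamma_{\rm sub,2}}^k_{(\rho,\sigma)}$ is either in $\average{\Gamma_{\rm sub,2}}$ or monotone; and (2) the analogue of Proposition~\ref{prop:second subsetneq}: an explicit monotone function (the paper takes $f(1,1,1)=1$, $f=0$ elsewhere) shown to lie outside $\overline{\average{\Gamma_{\rm sub,2}}}_{\{0,1\}}$, which the paper does by a lattice-closure argument on the zero set of a putative submodular representative $g$. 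Without supplying these two arguments, your proposal establishes only the first of the three assertions packed into the theorem.
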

\begin{thm}\label{thm:{0,1}eqExpressive}
	$\overline{\average{\Gamma_{{\rm sub},2}}}_{\{0,1\}} = \average{\Gamma_{{\rm sub},2}} \cup \average{\Gamma_{{\rm sub},2}}^2_{(\rho^*, \sigma^*_1)} \cup \average{\Gamma_{{\rm sub},2}}^2_{(\rho^*, \sigma^*_2)}$.
\end{thm}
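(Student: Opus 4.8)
The plan is to prove the two inclusions separately. The inclusion $\supseteq$ is immediate from the definitions: each of $\average{\Gamma_{\rm sub,2}}$, $\average{\Gamma_{\rm sub,2}}^2_{(\rho^*,\sigma^*_1)}$, $\average{\Gamma_{\rm sub,2}}^2_{(\rho^*,\sigma^*_2)}$ is of the form $\average{\Gamma_{\rm sub,2}}^k_{(\rho,\sigma)}$ for one specific triple $(k,\rho,\sigma)$ (the first via $k=1$, $\rho=\sigma={\rm id}$, where the retraction inequality is vacuous), hence each is contained in the union $\overline{\average{\Gamma_{\rm sub,2}}}_{\{0,1\}}$ taken over all triples. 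For the reverse inclusion I would fix $f\in\overline{\average{\Gamma_{\rm sub,2}}}_{\{0,1\}}$, i.e.\ a $(k,\rho,\sigma)$-network representable $f$ with a representing network $G$, so that $f(x)=C_{\rm min}(\sigma(x_1),\dots,\sigma(x_n))+\kappa$ with $C_{\rm min}$ submodular (being a partial minimization of an $s$-$t$ cut function); here I use the unnamed lemma identifying network representability with membership in $\average{\Gamma_{\rm sub,2}}^k_{(\rho,\sigma)}$ to pass freely between networks and the algebra. Writing $e_0:=\sigma(0)$ and $e_1:=\sigma(1)$ for the two patterns in $E\subseteq\{0,1\}^k$, the entire argument is organized by whether $e_0$ and $e_1$ are comparable in the coordinatewise order.

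In the comparable case, suppose first $e_0\le e_1$. Then the per-variable map $0\mapsto e_0$, $1\mapsto e_1$ is a lattice homomorphism $(\{0,1\},\wedge,\vee)\to(\{0,1\}^k,\wedge,\vee)$, so $\sigma$ preserves $\wedge,\vee$; composing the submodular $C_{\rm min}$ with $\sigma$ preserves submodularity, and since here no node is forced to equal $\bar x_i$, evaluating at $\sigma(x)$ only pins some nodes to constants and identifies the remaining ones with the $x_i$, which are clone operations, giving $f\in\average{\Gamma_{\rm sub,2}}$. For $e_1\le e_0$ the same computation yields $f(\bar x)\in\average{\Gamma_{\rm sub,2}}$, and I would invoke the auxiliary fact that $\average{\Gamma_{\rm sub,2}}$ is closed under simultaneous complementation of all variables (global complementation sends an $s$-$t$ cut function to an $s$-$t$ cut function, by reversing the network and exchanging $s$ and $t$), again giving $f\in\average{\Gamma_{\rm sub,2}}$. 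Thus the comparable case lands in the first set.

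In the incomparable case both index sets $P_{01}:=\{l: e_0^l=0,\ e_1^l=1\}$ and $P_{10}:=\{l: e_0^l=1,\ e_1^l=0\}$ are nonempty, while the remaining coordinates $P_{00},P_{11}$ are those on which $e_0,e_1$ agree. In a configuration coming from $x$, every node $i^l$ with $l\in P_{00}$ (resp.\ $P_{11}$) is fixed to $0$ (resp.\ $1$), every node with $l\in P_{01}$ equals $x_i$, and every node with $l\in P_{10}$ equals $\bar x_i$. I would reduce $G$ to a two-node-per-variable network $G'$ by merging each agreeing node into $t$ or $s$, merging all $P_{01}$-nodes of variable $i$ into one node $a_i$ and all $P_{10}$-nodes into one node $b_i$ (summing parallel capacities). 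On the relevant configurations $(a_i,b_i)=(x_i,\bar x_i)=\sigma^*_1(x_i)$, so $f(x)=C'_{\rm min}(\sigma^*_1(x_1),\dots,\sigma^*_1(x_n))+\kappa'$. A short case analysis—using submodularity of $C'_{\rm min}$ together with the original $\rho$-retraction inequality—shows that on each variable's pair the induced retraction must send both $(0,0)$ and $(1,1)$ to the \emph{same} element of $\{(1,0),(0,1)\}$ (the genuinely mixed choices force $C'_{\rm min}$ to be independent of one of the two nodes, collapsing back to the comparable case). If that element is $(0,1)$ the induced retraction is exactly $\rho^*$ and $f$ lies in $\average{\Gamma_{\rm sub,2}}^2_{(\rho^*,\sigma^*_1)}$; if it is $(1,0)$, swapping the two node-coordinates of every variable turns the retraction into $\rho^*$ while turning $\sigma^*_1$ into $\sigma^*_2$, placing $f$ in $\average{\Gamma_{\rm sub,2}}^2_{(\rho^*,\sigma^*_2)}$.

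The hard part will be precisely this descent of the retraction condition through the reduction. The inequality $C_{\rm min}(x)\ge C_{\rm min}(\rho(x))$ is a \emph{joint} condition over all $kn$ coordinates, whereas I need the analogous $\rho^*$-inequality for the merged $C'_{\rm min}$, jointly over all $2n$ coordinates; I must verify that pinning the agreeing nodes and merging the $P_{01}$- and $P_{10}$-nodes is compatible with both submodularity and the retraction, and that the orientation of the induced pair-retraction is the \emph{same} for every variable (which it is, since $e_0,e_1$, and hence $P_{01},P_{10}$, are common to all variables). To streamline the bookkeeping I would first invoke Theorem~\ref{thm:{0,1}mainExpressive}, which already forces any $f$ in this case to be monotone; the monotonicity pins the global orientation and reduces the remaining task to checking that $G'$ realizes a valid $(2,\rho^*,\sigma^*_1)$- or $(2,\rho^*,\sigma^*_2)$-representation.
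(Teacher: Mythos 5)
Your overall strategy coincides with the paper's: the containment $\supseteq$ is immediate, and for $\subseteq$ the paper also splits on whether $\sigma(0)$ and $\sigma(1)$ are comparable, uses complementation (its Lemmas~\ref{lem:f and overline f} and~\ref{lem:network rep}) to halve the cases, handles the comparable case by pinning/identification gadgets that stay inside $\average{\Gamma_{\rm sub,2}}$, and in the incomparable case with a ``collapsing'' retraction merges the $P_{01}$- and $P_{10}$-coordinates down to one pair per variable (the paper does this algebraically, adding equality constraints $h_2$ and partially minimizing to obtain $g''$ on $\{0,1\}^{2n}$, rather than by network surgery, but these are equivalent). The descent of the retraction inequality through that merge, which you flag as the hard part, is in fact routine: the paper just observes $g''(u)=g(v)$ for the canonical lift $v$ of $u$ and applies $g(v)\ge g(\rho(v))$.

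The genuine gap is in the subcases you dismiss parenthetically: when $\sigma(0),\sigma(1)$ are incomparable but the induced pair-retraction sends $(0,0)$ and $(1,1)$ to \emph{different} elements of $\{(0,1),(1,0)\}$ (the paper's cases (iii-1) and (iii-2)). Your proposed mechanism --- that ``$C'_{\rm min}$ is forced to be independent of one of the two nodes'' --- is neither established nor what actually happens. What the paper proves (Claim~\ref{cl:sigma}) is that $f$ is \emph{re-representable} via the comparable encoding $\sigma'(0)=\sigma(0)\wedge\sigma(1)$, $\sigma'(1)=\sigma(0)\vee\sigma(1)$, i.e.\ $f(x)=g(\sigma'(x_1),\dots,\sigma'(x_n))$, and this requires a three-stage tightness argument: one sandwiches $g(\sigma(x))+g(\sigma(\mathbf 1))$, then $g(\sigma(\mathbf 0))+g(\sigma(\overline{x}))$, then combines the two resulting identities, each time using submodularity followed by the retraction inequality to force every intermediate inequality to be an equality. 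Without this (or an equivalent), the mixed subcases do not visibly collapse into $\average{\Gamma_{\rm sub,2}}$, and your trichotomy is incomplete. Separately, invoking Theorem~\ref{thm:{0,1}mainExpressive} to ``pin the orientation'' does not help: monotonicity is a consequence of the collapsing subcases, not an input to them, and membership in $\average{\Gamma_{\rm sub,2}}^2_{(\rho^*,\sigma^*_1)}$ is strictly stronger than monotonicity (Proposition~\ref{prop:second subsetneq} exhibits a monotone function outside it), so the explicit construction of $g''$ cannot be bypassed. Finally, in the comparable case the clause ``composing with $\sigma$ preserves submodularity'' only yields $f\in\Gamma_{\rm sub}$, not $f\in\average{\Gamma_{\rm sub,2}}$; it is your second argument (pinning and identification, the paper's gadget $h$) that does the work there.
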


Theorem~\ref{thm:3ary2sub} is rephrased as $\Gamma_{\rm bisub,3} \not\subseteq \average{\Gamma_{\rm sub,2}}^2_{(\rho_2, \sigma_2)}$,
since $\average{\Gamma_{\rm sub,2}}^2_{(\rho_2, \sigma_2)}$ is equal to the set of $(2, \rho_2, \sigma_2)$-network representable functions.
We prove a stronger statement such that $\Gamma_{\rm bisub,3}$ is not included even in the set of $(\Gamma_{\rm sub}, \rho_2, \sigma_2)$-representable functions.
\begin{thm}\label{thm:3ary2subExpressive}
	$\Gamma_{\rm bisub,3} \not\subseteq \average{\Gamma_{\rm sub}}^2_{(\rho_2, \sigma_2)}$.
\end{thm}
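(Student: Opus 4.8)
The plan is to invoke Lemma~\ref{lem:exp chara} by exhibiting a single ternary bisubmodular function together with a weighted polymorphism that certifies its nonrepresentability. First I would unwind the definition into a concrete finite problem. Submodular functions are closed under nonnegative scaling, adding constants, addition with variable identification (diagonal restriction of a submodular function is submodular), and partial minimization, and they contain the equality relation $f_{=}$ on $\{0,1\}$; hence $\Gamma_{\rm sub}$ is itself a weighted relational clone and $\average{\Gamma_{\rm sub}} = \Gamma_{\rm sub}$. Consequently $\average{\Gamma_{\rm sub}}^2$ is exactly the set of even-arity submodular functions, and a ternary $f$ on $\{0,-1,1\}^3$ lies in $\average{\Gamma_{\rm sub}}^2_{(\rho_2,\sigma_2)}$ if and only if there exist a submodular $g : \{0,1\}^6 \rightarrow \overline{\mathbf{Q}}$, whose six coordinates are grouped into three blocks of two, and a constant $\kappa$ such that $g(\rho_2(v)) \le g(v)$ for all $v \in {\rm dom}\ g$ (with $\rho_2$ applied block-wise) and $f(x) = g(\sigma_2(x_1),\sigma_2(x_2),\sigma_2(x_3)) + \kappa$. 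This is a linear feasibility problem in the $2^6$ unknown values of $g$: submodularity and block-wise retractability are linear inequalities, and the matching condition is a system of linear equalities.

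Next I would set up the obstruction via linear programming duality, which is the content of Lemma~\ref{lem:exp chara} in clone-theoretic form. Infeasibility of this system is certified by a Farkas combination: a nonnegative combination of submodularity inequalities $g(u)+g(w) \ge g(u\wedge w)+g(u\vee w)$ and retractability inequalities $g(v) \ge g(\rho_2(v))$ that telescopes to a valid inequality involving only the values $g(\sigma_2(x))$, that is, only the $f$-values, and which the chosen $f$ violates. The crucial point is that binary submodularity combined with retractability reproduces nothing beyond the bisubmodular inequalities $f(x)+f(y) \ge f(x\sqcap y)+f(x\sqcup y)$, because $\sigma_2(x)\wedge\sigma_2(y) = \sigma_2(x\sqcap y)$ and $\rho_2(\sigma_2(x)\vee\sigma_2(y)) = \sigma_2(x\sqcup y)$. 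To obtain a strictly stronger inequality I would exploit that $g$ is submodular on the entire cube $\{0,1\}^6$, chaining submodularity through auxiliary points whose blocks equal $(1,1)$ (points lying outside the image of $\sigma_2$) and using retractability to bound the values of $g$ at those points from above; this is what lets the certificate escape the bisubmodular cone.

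Formally I would repackage this as a weighted polymorphism. By Theorem~\ref{thm:ExtendExpressive}, $\average{\Gamma_{\rm sub}}^2_{(\rho_2,\sigma_2)}$ is a weighted relational clone on $\{0,-1,1\}$, so by Lemma~\ref{lem:exp chara} it suffices to produce one $\omega \in {\rm wPol}^{(m)}(\average{\Gamma_{\rm sub}}^2_{(\rho_2,\sigma_2)})$ together with tuples $x^1,\dots,x^m \in {\rm dom}\ f = \{0,-1,1\}^3$ satisfying $\sum_{\varphi} \omega(\varphi)\, f(\varphi(x^1,\dots,x^m)) > 0$. I would first determine ${\rm Pol}^{(m)}(\average{\Gamma_{\rm sub}}^2_{(\rho_2,\sigma_2)})$: these are the operations on $\{0,-1,1\}$ obtained by conjugating, through $\sigma_2$, the lattice operations of $\Gamma_{\rm sub}$ followed by $\rho_2$ (for instance $\sqcap$ and $\sqcup$ arise from $\wedge$ and $\rho_2\circ\vee$). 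I expect the smallest useful arity to be $m=3$, mirroring the need for higher arity in the \v{Z}ivn\'y--Cohen--Jeavons argument. Checking that a candidate weighting $\omega$ is valid, namely that $\sum_{\varphi}\omega(\varphi)\,h(\varphi(\cdots)) \le 0$ for every $h$ in the clone, reduces through the $\sigma_2/\rho_2$ pullback to the submodularity-plus-retractability inequalities for $g$ and hence, per arity, to a finite linear program.

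The main obstacle is precisely this simultaneous construction: pinning down the ternary bisubmodular $f$, the arity-$m$ weighted polymorphism $\omega$, and the tuples $x^1,\dots,x^m$ so that the defining inequality of $\omega$ holds for all submodular $g$ after pullback yet is strictly violated by $f$. I would locate these by solving the finite feasibility program and its dual numerically, using the same computer-assisted approach that produced the extremal retractable networks in Figure~\ref{fig:2_rho_2sub_network}, and then certify the result by hand: verify directly the three defining conditions of a weighted polymorphism, and evaluate $\sum_{\varphi}\omega(\varphi)\,f(\varphi(x^1,\dots,x^m))$ to confirm strict positivity. Finally, since $(2,\rho_2,\sigma_2)$-network representability coincides with $\average{\Gamma_{\rm sub,2}}^2_{(\rho_2,\sigma_2)} \subseteq \average{\Gamma_{\rm sub}}^2_{(\rho_2,\sigma_2)}$, the same function $f$ simultaneously establishes Theorem~\ref{thm:3ary2sub}.
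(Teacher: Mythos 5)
Your strategy is exactly the one the paper follows: reduce membership in $\average{\Gamma_{\rm sub}}^2_{(\rho_2,\sigma_2)}$ to the existence of a submodular $g$ on $\{0,1\}^6$ with block-wise retractability, observe that $\sigma_2(x)\wedge\sigma_2(y)=\sigma_2(x\sqcap y)$ and $\rho_2(\sigma_2(x)\vee\sigma_2(y))=\sigma_2(x\sqcup y)$ so that the obstruction must chain submodularity through points outside the image of $\sigma_2$, and package the resulting Farkas certificate as a weighted polymorphism to invoke Lemma~\ref{lem:exp chara}. However, as written the proposal has a genuine gap: the entire mathematical content of the theorem is the existence of a concrete triple (bisubmodular $f$, weighted polymorphism $\omega$, tuples $x^1,\dots,x^m$), and you defer all three to an unexecuted numerical search. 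You acknowledge this is ``the main obstacle,'' but a proof must actually exhibit the certificate and verify it; nothing in your argument establishes that the search succeeds, i.e.\ that the bisubmodular cone on three variables is strictly larger than the cone cut out by pulled-back submodularity plus retractability. (Indeed, for binary bisubmodular functions the analogous search provably fails by Theorem~\ref{thm:ishii}, so success at arity three is not automatic.)

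Your arity guess is also off: the paper's certificate is a $4$-ary weighted polymorphism, not ternary. Concretely, the paper takes $f(0,0,0):=-1$, $f(0,1,1)=f(1,0,1)=f(1,1,0):=1$, $f(1,1,1):=2$, and $f:=0$ elsewhere; the weighted polymorphism puts weight $-1$ on each of $e^{(4)}_1,\dots,e^{(4)}_4$ and weight $+1$ on the four operations obtained by conjugating the telescoping chain $u\mapsto x\wedge y$, $(x\vee y)\wedge z$, $(x\vee y\vee z)\wedge w$, $x\vee y\vee z\vee w$ through $\sigma_2$ and $\rho_2$ (validity follows from three applications of submodularity plus one application of retractability, exactly the chain you describe); and the violating tuples are $(0,1,-1)$, $(0,-1,-1)$, $(0,-1,1)$, $(-1,1,1)$, giving $0 = \sum f(x^i) < \sum f(\varphi_j(x^1,\dots,x^4)) = 1$. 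So your plan is the right one, but to close it you must supply and hand-verify data of this kind.
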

Theorem~\ref{thm:2aryksub} is rephrased as $\Gamma_{k{\rm sub,2}} \not\subseteq \average{\Gamma_{\rm sub,2}}^k_{(\rho_k, \sigma_k)}$,
since $\average{\Gamma_{\rm sub,2}}^k_{(\rho_k, \sigma_k)}$ is equal to the set of $(k, \rho_k, \sigma_k)$-network representable functions.
Again we prove a stronger statement such that $\Gamma_{k{\rm sub,2}}$ is not included even in the set of $(\Gamma_{\rm sub}, \rho_k, \sigma_k)$-representable functions.
\begin{thm}\label{thm:2aryksubExpressive}
	$\Gamma_{k{\rm sub,2}} \not\subseteq \average{\Gamma_{\rm sub}}^k_{(\rho_k, \sigma_k)}$ for all $k \geq 3$.
\end{thm}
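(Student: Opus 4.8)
The plan is to exhibit, for every $k \ge 3$, a binary $k$-submodular function $f$ together with a weighted polymorphism of the clone $\average{\Gamma_{\rm sub}}^k_{(\rho_k,\sigma_k)}$ that $f$ violates. By Theorem~\ref{thm:ExtendExpressive} this clone is a weighted relational clone on $[0,k]$, so it coincides with its own expressive power; hence Lemma~\ref{lem:exp chara} applies and it suffices to produce an $m$-ary $\omega \in {\rm wPol}^{(m)}(\average{\Gamma_{\rm sub}}^k_{(\rho_k,\sigma_k)})$ and tuples $a^1,\dots,a^m \in {\rm dom}\,f \subseteq [0,k]^2$ with $\sum_\varphi \omega(\varphi)\, f(\varphi(a^1,\dots,a^m)) > 0$. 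Since every function in the clone is $k$-submodular (the analogue of Lemma~\ref{lem:IWY}), such an $f$ automatically lies outside the clone while remaining $k$-submodular.

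The engine for producing weighted polymorphisms of the clone is a lifting construction from $\{0,1\}$ to $[0,k]$. For an $m$-ary operation $\varphi$ on $\{0,1\}$, define its lift $\varphi^\sigma$ on $[0,k]$ by $\varphi^\sigma(a_1,\dots,a_m) := \sigma_k^{-1}\big(\rho_k(\varphi(\sigma_k(a_1),\dots,\sigma_k(a_m)))\big)$, where $\varphi$ is applied coordinatewise inside the $k$-bit blocks. First I would check that if $\varphi$ preserves all submodular domains then $\varphi^\sigma$ preserves the domains of the clone, using the retraction inequality $g(\rho_k v) \le g(v)$; thus $\varphi^\sigma \in {\rm Pol}^{(m)}(\average{\Gamma_{\rm sub}}^k_{(\rho_k,\sigma_k)})$. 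Then, given an $m$-ary weighted polymorphism $\omega$ of $\Gamma_{\rm sub}$, I would transport its weights to the lifts and prove that the result is a weighted polymorphism of the clone: for $f = g\circ\sigma_k$ in the clone and tuples $a^j$, applying $\omega$ to $\sigma_k(a^j) \in {\rm dom}\,g$ gives the Boolean inequality $\sum_\varphi \omega(\varphi) g(\varphi(\sigma_k(a^1),\dots)) \le 0$, and for each positive-weight output one replaces $g(\varphi(\dots))$ by the smaller $g(\rho_k\varphi(\dots)) = f(\varphi^\sigma(a))$ via the retraction inequality, while projections lift to projections and contribute equalities. This preserves both the zero-sum and the sign conditions, so the transported $\omega^\sigma$ is a genuine weighted polymorphism of the clone. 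Note $\wedge^\sigma = \sqcap$ and $\vee^\sigma = \sqcup$, so binary lifts merely reproduce $k$-submodularity.

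The separating weighted polymorphism must therefore have arity $m \ge 3$ and exploit the one feature that distinguishes the clone from $\Gamma_{k{\rm sub}}$: the retraction $\rho_k$ collapses every ``overlapping'' multi-one block to $0$. Concretely, an $m$-ary Boolean operation whose output, on distinct colors, is a vector with several ones is pushed by $\rho_k$ to the color $0$, and it is the value of $g$ on such collapsed blocks---unavailable to a general $k$-submodular function---that the inequality records. I would fix three colors $\{0,1,2,3\}$ and search the finite cone of lifted weighted polymorphisms against the finite linear program describing binary $3$-submodular functions on $\{0,1,2,3\}^2$, looking for a pair $(\omega^\sigma, f)$ of strictly positive value. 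A single such witness serves all $k \ge 3$: extending $f$ by $+\infty$ outside $\{0,1,2,3\}^2$ keeps it $k$-submodular because $\{0,1,2,3\}$ is closed under $\sqcap$ and $\sqcup$, and lifting the same Boolean $\omega$ through $\sigma_k,\rho_k$ yields operations that map $\{0,1,2,3\}$-tuples back into $\{0,1,2,3\}$.

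The hard part is producing this explicit pair. The most natural ternary candidate---the lift of the median together with the full meet and full join, which is a valid weighted polymorphism of $\Gamma_{\rm sub}$---turns out to be a consequence of $k$-submodularity on three colors, so it does not separate; the same collapse that makes it a clone polymorphism also makes it satisfied by every $k$-submodular function. I therefore expect to need a subtler, possibly higher-arity, lift whose collapsed outputs are not reconstructible from the pairwise $\sqcap/\sqcup$ inequalities, equivalently whose associated inequality admits no nonnegative combination of $k$-submodularity inequalities (a Farkas certificate of non-implication). Obtaining such an $\omega^\sigma$ and the matching $f$---most directly by computing the extreme rays and facets of the representable cone on three colors---is the crux; by contrast, the validity of the lifting construction and the reduction from general $k$ to $k = 3$ are routine.
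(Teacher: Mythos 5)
Your strategy is exactly the one the paper follows: work with weighted polymorphisms of $\average{\Gamma_{\rm sub}}^k_{(\rho_k,\sigma_k)}$ obtained by lifting Boolean lattice terms through $\sigma_k$ and $\rho_k$, justify them by a nonnegative combination of submodular exchange inequalities plus the retraction inequality $g(v)\ge g(\rho_k(v))$, observe that the binary lifts only reproduce $\sqcap$ and $\sqcup$ so a separating certificate needs higher arity, and note that a witness on the colors $\{0,1,2,3\}$ settles every $k\ge 3$. All of that is sound and matches Section~\ref{subsec:thm:2aryksubExpressive}.

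The gap is that you stop exactly where the proof begins. The theorem is an existence statement, and your proposal defers the existence of the separating pair $(\omega^{\sigma},f)$ to a computer search that you have not carried out; ``I would search the finite cone looking for a pair of strictly positive value'' establishes nothing, since a priori that search could come up empty (in which case the clone would contain $\Gamma_{k{\rm sub},2}$ and the theorem would be false). The entire mathematical content here is the explicit certificate, which the paper supplies: the function $f$ with $f(0,0)=-1$, $f(1,1)=1$ and $f=0$ elsewhere; an $8$-ary weighted polymorphism $\omega_k$ assigning weights $-5,-3,-2$ to the projections and weights $3,2,1$ to sixteen lifted lattice terms $\varphi_1,\dots,\varphi_{16}$; and the eight tuples $(1,2),(1,3),(2,1),(2,2),(2,3),(3,1),(3,2),(3,3)$, for which the weighted sum evaluates to $1>0$ (the positive mass coming from $\varphi_{10},\varphi_{11},\varphi_{12}$ landing on $(1,1)$ against $\varphi_{16}$ landing on $(0,0)$ with weight $2$). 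Verifying that this particular combination of exchange and retraction inequalities is valid, and that the table of values comes out positive, is the proof; without it your write-up is a (correct) plan of attack rather than a proof. Your side remark that the ternary median/meet/join lift fails to separate is plausible and consistent with the paper's need for arity $8$, but it too is asserted without verification.
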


The proofs of Theorems~\ref{thm:{0,1}mainExpressive}, \ref{thm:{0,1}eqExpressive}, \ref{thm:3ary2subExpressive}, and \ref{thm:2aryksubExpressive}
are given in Sections~\ref{subsec:thm:{0,1}mainExpressive}, \ref{subsec:thm:{0,1}eqExpressive}, \ref{subsec:thm:3ary2subExpressive}, and \ref{subsec:thm:2aryksubExpressive}, respectively.

\section{Discussion}\label{sec:remarks}
\subsection{Submodular representability}
Theorems~\ref{thm:3ary2subExpressive} and \ref{thm:2aryksubExpressive} suggest function classes represented by submodular functions instead of networks.
A function $f : D^n \rightarrow \overline{\mathbf{Q}}$ is said to be {\it $(k, \rho, \sigma)$-submodular representable} if $f \in \average{\Gamma_{\rm sub}}^k_{(\rho, \sigma)}$ for a positive integer $k$, a retraction $\rho : \{0,1\}^k \rightarrow E$, and a bijection $\sigma : D \rightarrow E$.

It is easier to analyze the submodular representability than the network representability.
Indeed, in the submodular representability, we do not need to consider extra variables by the property $\average{\Gamma_\textrm{sub}} = \Gamma_\textrm{sub}$.
Therefore, an $n$-ary function $f$ is $(k, \rho, \sigma)$-submodular representable if and only if there exists a $kn$-ary submodular function $g$ satisfying $g(\sigma(x_1), \sigma(x_2), \dots, \sigma(x_n)) = f(x_1,x_2,\dots,x_n)$ for any $x_1,x_2,\dots,x_n \in D$ and $g(\rho(v)) \leq g(v)$ for any $v \in \{0,1\}^{kn}$.
The latter condition can be verified by checking the nonemptiness of a polyhedron in $\mathbf{R}^{2^{kn}}$ defined by $O(2^{2kn})$ inequalities.
Thus, the following holds:
\begin{prop}
	For $f : D^n \rightarrow \overline{\mathbf{Q}}$, a bijection $\sigma : D \rightarrow E$, and a retraction $\rho : \{0,1\}^k \rightarrow E$,
	we can determine whether $f \in \average{\Gamma_{\rm sub}}^k_{(\rho, \sigma)}$ in time polynomial of $2^{kn}$.
\end{prop}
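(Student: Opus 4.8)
The plan is to reduce the membership test to a single linear-programming feasibility problem. By the reformulation established just above, $f$ lies in $\average{\Gamma_{\rm sub}}^k_{(\rho,\sigma)}$ if and only if there is a $kn$-ary submodular function $g$ on $\{0,1\}^{kn}$ with $g(\sigma(x_1),\dots,\sigma(x_n)) = f(x_1,\dots,x_n)$ for all $(x_1,\dots,x_n) \in D^n$ and $g(\rho(v)) \le g(v)$ for all $v \in \{0,1\}^{kn}$. The crucial point is that no extra variables are needed, since $\average{\Gamma_{\rm sub}} = \Gamma_{\rm sub}$, so $g$ is described by exactly its $2^{kn}$ values, which I treat as the unknowns of a system. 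Each of the three requirements is linear in these unknowns, so the admissible $g$'s form a polyhedron $P \subseteq \mathbf{R}^{2^{kn}}$, and the task is to decide whether $P \neq \emptyset$.

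Concretely, I would list the constraints as follows. Submodularity is the family $g(u)+g(v) \ge g(u\wedge v)+g(u\vee v)$ over all pairs $u,v \in \{0,1\}^{kn}$, giving $O(2^{2kn})$ inequalities. The matching condition pins the $|D|^n$ coordinates of $g$ indexed by $E^n = \sigma(D^n)$ to the prescribed values of $f$. The retraction condition contributes the $2^{kn}$ inequalities $g(v) \ge g(\rho(v))$; here it is useful that $\rho(v) \in E^n$ always, so each such inequality ties $g(v)$ to a coordinate already constrained by the matching condition. In total $P$ is cut out by $O(2^{2kn})$ linear (in)equalities in at most $2^{kn}$ rational variables, and deciding its nonemptiness is a linear program solvable (e.g.\ by the ellipsoid method) in time polynomial in its bit-size, hence polynomial in $2^{kn}$.

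The one subtlety---and the step I expect to require the most care---is the value $+\infty$, because then $g$ is not literally a point of $\mathbf{R}^{2^{kn}}$ and some submodularity constraints degenerate instead of staying linear. I would resolve this by fixing the domain of $g$ before writing the LP. A submodular function has its domain closed under $\wedge$ and $\vee$, and the matching condition forces $\sigma({\rm dom}\, f) \subseteq {\rm dom}\, g$; hence ${\rm dom}\, g$ must contain the sublattice $L_0$ generated by $\sigma({\rm dom}\, f)$. Restricting any admissible $g$ to $L_0$ is again admissible---submodularity and matching are clearly preserved, and the retraction inequalities survive because for $v \in L_0$ one has $\rho(v) \in {\rm dom}\, g \cap E^n = \sigma({\rm dom}\, f) \subseteq L_0$---so one may assume ${\rm dom}\, g = L_0$. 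I would therefore first compute $L_0$ by closing $\sigma({\rm dom}\, f)$ under $\wedge,\vee$ (polynomial in $2^{kn}$), reject immediately if $L_0 \cap E^n \neq \sigma({\rm dom}\, f)$ or if some $v \in L_0$ satisfies $\rho(v) \notin \sigma({\rm dom}\, f)$ (both signalling that no admissible $g$ exists), and otherwise introduce one genuine real variable $g(v)$ for each $v \in L_0$. On this fixed domain every surviving constraint is an honest linear (in)equality, so the argument of the previous paragraph applies, and the whole decision runs in time polynomial in $2^{kn}$.
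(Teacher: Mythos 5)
Your proof is correct and takes essentially the same route as the paper: the paper's argument is precisely the reduction to nonemptiness of a polyhedron in $\mathbf{R}^{2^{kn}}$ cut out by $O(2^{2kn})$ linear constraints (submodularity, the matching of $g\circ\sigma$ with $f$, and the retraction inequalities $g(\rho(v))\le g(v)$), decided by linear programming. Your explicit handling of the value $+\infty$ via the sublattice $L_0$ generated by $\sigma({\rm dom}\,f)$ is a correct filling-in of a detail the paper leaves implicit.
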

In the network representability,
the best known upper bound of the number of extra variables to be added is the $kn$-th Dedekind number $M(kn)$~\cite{DAM/RRLT17}, and $M(kn) \geq 2^{kn \choose \lfloor kn/2 \rfloor}$ holds.

\subsection{Another representation of $k$-submodular functions}
There is another natural parameter $(2k, \tilde{\rho}_{k}, \tilde{\sigma}_{k})$, different from $(k, \rho_k, \sigma_k)$, that represents $k$-submodular functions.
Let  $\tilde{\sigma}_{k} : [0,k] \rightarrow \{0,1\}^{2k}$ and $\tilde{\rho}_{k} : \{0,1\}^{2k} \rightarrow \{0,1\}^{2k}$ be maps defined by
\begin{align*}
&\tilde{\sigma}_{k}(x) :=
\begin{cases}
(\underbrace{0,\dots,0,\smash{\overset{i}{\check{1}}},0,\dots,0}_{k}, \underbrace{1,\dots,1,\smash{\overset{i}{\check{0}}},1,\dots,1}_k) & \text{if $x = i \in [k]$},\\
(0, \dots,0) & \text{if $x = 0$},
\end{cases}\\
&\tilde{\rho}_{k}(x) :=
\begin{cases}
x & \text{if $x = (\underbrace{0,\dots,0,\smash{\overset{i}{\check{1}}},0,\dots,0}_{k}, \underbrace{1,\dots,1,\smash{\overset{i}{\check{0}}},1,\dots,1}_k)$ for some $i \in [k]$},\\
(0,\dots,0) & \text{otherwise}.
\end{cases}
\end{align*}
\begin{lem}
	$(2k, \tilde{\rho}_{k}, \tilde{\sigma}_{k})$-submodular representable functions are $k$-submodular.
\end{lem}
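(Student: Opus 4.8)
The plan is to unpack the definition of $(2k,\tilde{\rho}_{k},\tilde{\sigma}_{k})$-submodular representability and to reduce the $k$-submodularity of $f$ to the ordinary submodularity of the representing function together with one purely combinatorial identity, namely an identity relating the encoding $\tilde{\sigma}_{k}$ and the retraction $\tilde{\rho}_{k}$ to the operations $\sqcap,\sqcup$ on $[0,k]$ and $\wedge,\vee$ on $\{0,1\}^{2k}$. By definition, if $f:[0,k]^n\rightarrow\overline{\mathbf{Q}}$ is $(2k,\tilde{\rho}_{k},\tilde{\sigma}_{k})$-submodular representable, then there is $g\in\average{\Gamma_{\rm sub}}^{2k}$ with $f(x)=g(\tilde{\sigma}_{k}(x))$ for all $x\in[0,k]^n$ and $g(\tilde{\rho}_{k}(v))\le g(v)$ for all $v$; here I write $\tilde{\sigma}_{k}(x):=(\tilde{\sigma}_{k}(x_1),\dots,\tilde{\sigma}_{k}(x_n))$ and extend $\tilde{\rho}_{k}$ blockwise, following the paper's retraction convention. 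Since $\average{\Gamma_{\rm sub}}=\Gamma_{\rm sub}$, the function $g$ is a genuine submodular function on $\{0,1\}^{2kn}$, so $g(u)+g(w)\ge g(u\wedge w)+g(u\vee w)$ for all $u,w\in\{0,1\}^{2kn}$.

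First I would establish the key identity: for all $a,b\in[0,k]$,
\begin{align*}
\tilde{\rho}_{k}(\tilde{\sigma}_{k}(a)\wedge\tilde{\sigma}_{k}(b))=\tilde{\sigma}_{k}(a\sqcap b),\qquad
\tilde{\rho}_{k}(\tilde{\sigma}_{k}(a)\vee\tilde{\sigma}_{k}(b))=\tilde{\sigma}_{k}(a\sqcup b).
\end{align*}
This is verified by a short case analysis, writing $e_i$ for the $i$-th unit vector in $\{0,1\}^k$ and $\mathbf{0},\mathbf{1}$ for the all-zero and all-one vectors, so that $\tilde{\sigma}_{k}(i)=(e_i,\mathbf{1}-e_i)$ and $\tilde{\sigma}_{k}(0)=\mathbf{0}$. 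When $a=b$ or one of $a,b$ equals $0$, the meet and join of the two encodings already lie in the image of $\tilde{\sigma}_{k}$ and equal $\tilde{\sigma}_{k}(a\sqcap b)$ and $\tilde{\sigma}_{k}(a\sqcup b)$, both fixed by $\tilde{\rho}_{k}$. The essential case is $0\neq a=i$, $0\neq b=j$ with $i\neq j$: one computes $\tilde{\sigma}_{k}(i)\wedge\tilde{\sigma}_{k}(j)=(\mathbf{0},\mathbf{1}-e_i-e_j)$ and $\tilde{\sigma}_{k}(i)\vee\tilde{\sigma}_{k}(j)=(e_i+e_j,\mathbf{1})$, neither of which is in the image of $\tilde{\sigma}_{k}$; since $i\sqcap j=i\sqcup j=0$, applying $\tilde{\rho}_{k}$ sends both to $\mathbf{0}=\tilde{\sigma}_{k}(0)$, as required. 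Because $\wedge,\vee,\sqcap,\sqcup$ act coordinatewise and $\tilde{\rho}_{k},\tilde{\sigma}_{k}$ act blockwise, the identity lifts at once from $[0,k]$ to $[0,k]^n$:
\begin{align*}
\tilde{\rho}_{k}(\tilde{\sigma}_{k}(x)\wedge\tilde{\sigma}_{k}(y))=\tilde{\sigma}_{k}(x\sqcap y),\qquad
\tilde{\rho}_{k}(\tilde{\sigma}_{k}(x)\vee\tilde{\sigma}_{k}(y))=\tilde{\sigma}_{k}(x\sqcup y)
\end{align*}
for all $x,y\in[0,k]^n$.

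The conclusion then follows by chaining submodularity of $g$ with the retraction inequality. For any $x,y\in[0,k]^n$,
\begin{align*}
f(x)+f(y)
&=g(\tilde{\sigma}_{k}(x))+g(\tilde{\sigma}_{k}(y))\\
&\ge g(\tilde{\sigma}_{k}(x)\wedge\tilde{\sigma}_{k}(y))+g(\tilde{\sigma}_{k}(x)\vee\tilde{\sigma}_{k}(y))\\
&\ge g(\tilde{\rho}_{k}(\tilde{\sigma}_{k}(x)\wedge\tilde{\sigma}_{k}(y)))+g(\tilde{\rho}_{k}(\tilde{\sigma}_{k}(x)\vee\tilde{\sigma}_{k}(y)))\\
&=g(\tilde{\sigma}_{k}(x\sqcap y))+g(\tilde{\sigma}_{k}(x\sqcup y))\\
&=f(x\sqcap y)+f(x\sqcup y),
\end{align*}
where the first inequality is submodularity of $g$, the second is the retraction property (which holds at all $v$, trivially so when $g(v)=+\infty$), and the last two equalities use the lifted identity and $f=g\circ\tilde{\sigma}_{k}$. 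This is precisely the $k$-submodularity inequality for $f$.

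The main obstacle is the combinatorial identity of the first step, and concretely the case of distinct nonzero labels, where the naive meet and join of the encodings leave the image of $\tilde{\sigma}_{k}$; the argument genuinely relies on $\tilde{\rho}_{k}$ collapsing these out-of-range vectors back to $\tilde{\sigma}_{k}(0)$, in agreement with $i\sqcap j=i\sqcup j=0$. The remaining points are bookkeeping, provided one checks that the representation supplies an honest submodular $g$ (via $\average{\Gamma_{\rm sub}}=\Gamma_{\rm sub}$) and that the retraction inequality is applied at the two vectors $\tilde{\sigma}_{k}(x)\wedge\tilde{\sigma}_{k}(y)$ and $\tilde{\sigma}_{k}(x)\vee\tilde{\sigma}_{k}(y)$, including when they fall outside $\mathrm{dom}\,g$.
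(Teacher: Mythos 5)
Your proof is correct and takes the same route as the paper: the paper's own proof consists precisely of citing the identities $\tilde{\rho}_k(\tilde{\sigma}_k(x) \wedge \tilde{\sigma}_k(y)) = \tilde{\sigma}_k(x \sqcap y)$ and $\tilde{\rho}_k(\tilde{\sigma}_k(x) \vee \tilde{\sigma}_k(y)) = \tilde{\sigma}_k(x \sqcup y)$, which you verify and then chain with submodularity of $g$ and the retraction inequality exactly as intended. Your write-up simply makes explicit the case analysis and the final chain of inequalities that the paper leaves to the reader.
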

\begin{proof}
	It follows from $\tilde{\rho}_k(\tilde{\sigma}_k(x) \wedge \tilde{\sigma}_k(y)) = \tilde{\sigma}_k(x \sqcap y)$ and $\tilde{\rho}_k(\tilde{\sigma}_k(x) \vee \tilde{\sigma}_k(y)) = \tilde{\sigma}_k(x \sqcup y)$ for all $x, y \in [0,k]$.
\end{proof}
Ishii~\cite{master/I14_E} considered a class of skew-symmetric networks which are $(n, \tilde{\rho}_k)$-retractable,
and discussed the corresponding $(2k, \tilde{\rho}_{k}, \tilde{\sigma}_{k})$-network representable $k$-submodular functions.
This network representation was implicitly considered for $k$-submodular functions arising from minimum multiflow problems in \cite{MP/K94}; see~\cite{DO/H15}.
We raise a question:
{\it How are the $(k,\rho_k, \sigma_k)$- and $(2k, \tilde{\rho}_k, \tilde{\sigma}_k)$-submodular representaions related?}

\subsection{Submodular functions on $k$-diamonds}
The {\it $k$-diamond} is a lattice $D_k := \{\bot, \top, 1,2,\dots, k\}$,
where partial order $\preceq$ is defined by $\bot \prec i \prec \top$ for each $i \in [k]$ and incomparable for distinct $i,j \in [k]$.
A {\it $k$-diamond submodular function}~\cite{RIMS/FKMTT15, DO/K11} is a function $f$ on ${D_k}^n$ satisfying the following inequalities
\begin{align*}
f(x) + f(y) \geq f(x \wedge y) + f(x \vee y) \qquad (x,y \in {D_k}^n),
\end{align*}
where $\wedge$ (resp. $\vee$) is the meet (resp. join) operator on ${D_k}^n$.
Since $D_k$ is not a distributive lattice,
a $k$-diamond submodular function is essentially different from a submodular function on $\{0,1\}^n$.
A polynomial time algorithm for minimizing $k$-diamond submodular functions was discovered, just recently, by Fujishige et al.~\cite{RIMS/FKMTT15}.
The algorithm involves the ellipsoid method,
and is far from practical use.

It would be worth considering $k$-diamond submodular functions that fall into the ordinary submodularity via our framework.
Let $\sigma_\textrm{$k$-dia} : D_k \rightarrow \{0,1\}^k$ and $\rho_\textrm{$k$-dia} : \{0,1\}^k \rightarrow \{0,1\}^k$ be maps defined by
\begin{align*}
&\sigma_\textrm{$k$-dia}(x) := \begin{cases}
(1,1,\dots,1) & \text{$x = \top$},\\
(0,\dots,0,\overset{i}{\check{1}},0,\dots,0) & \text{if $x = i \in [k]$},\\
(0,0,\dots,0) & \text{if $x = \bot$},
\end{cases}\\
&\rho_\textrm{$k$-dia}(x) := \begin{cases}
x & \text{if $x = (0,0,\dots,0)$ or $x = (0,\dots,0,\smash{\overset{i}{\check{1}}},0,\dots,0)$ for some $i \in [k]$},\\
(1,1,\dots,1) & \text{otherwise}.
\end{cases}
\end{align*}
The parameter $(k, \rho_{k{\rm \mathchar`-dia}}, \sigma_{k{\rm \mathchar`-dia}})$ actually defines a class of $k$-diamond submodular functions as follows.
\begin{lem}
	$(k, \rho_{k{\rm \mathchar`-dia}}, \sigma_{k{\rm \mathchar`-dia}})$-submodular representable functions are $k$-diamond submodular.
\end{lem}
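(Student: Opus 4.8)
The plan is to invoke the characterization of submodular representability recorded just above in this subsection: an $n$-ary function $f$ on ${D_k}^n$ lies in $\average{\Gamma_{\rm sub}}^k_{(\rho_{k{\rm \mathchar`-dia}}, \sigma_{k{\rm \mathchar`-dia}})}$ precisely when there is a $kn$-ary submodular function $g$ on $\{0,1\}^{kn}$ with $g(\sigma(x_1),\dots,\sigma(x_n)) = f(x_1,\dots,x_n)$ for all $x_i \in D_k$ and $g(\rho(v)) \le g(v)$ for all $v \in \{0,1\}^{kn}$. Here and below I abbreviate $\sigma := \sigma_{k{\rm \mathchar`-dia}}$ and $\rho := \rho_{k{\rm \mathchar`-dia}}$ and extend both coordinatewise. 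Given such a $g$, I would derive the $k$-diamond submodular inequality for $f$ by chaining the ordinary submodularity of $g$ on $\{0,1\}^{kn}$ with the retraction inequality $g\circ\rho \le g$, after transporting the Boolean meet and join back to the meet and join of the lattice $D_k$ via two algebraic identities.

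The key step, and the only part that genuinely needs checking, is to establish the coordinatewise identities
\[
\rho(\sigma(x) \wedge \sigma(y)) = \sigma(x \wedge y), \qquad \rho(\sigma(x) \vee \sigma(y)) = \sigma(x \vee y) \qquad (x, y \in D_k),
\]
where on the left $\wedge, \vee$ denote the Boolean meet and join on $\{0,1\}^k$ and on the right they denote the meet and join of $D_k$. Since $\rho$, $\sigma$, and both pairs of lattice operations act coordinatewise, it suffices to verify these for scalar arguments $x,y \in D_k$, by a short case analysis according to whether $x$ or $y$ equals $\bot$ or $\top$. The cases involving $\bot$ or $\top$ are immediate because $\sigma(\bot)$ and $\sigma(\top)$ are the all-zero and all-one vectors, i.e.\ the bottom and top of $\{0,1\}^k$, and because every element of the image $E$ of $\sigma$ is fixed by $\rho$. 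The only substantive case is $x = i$, $y = j$ for distinct atoms $i,j \in [k]$: there $\sigma(i) \wedge \sigma(j)$ is the all-zero vector, whose $\rho$-image is $\sigma(\bot) = \sigma(i \wedge j)$, while $\sigma(i) \vee \sigma(j)$ is the vector carrying exactly two ones, which is neither all-zero nor a unit vector and hence is sent by $\rho$ to the all-one vector $\sigma(\top) = \sigma(i \vee j)$. This distinct-atom case is where the collapse of a non-extreme Boolean point is really used, and I expect the bookkeeping of that case to be the main (albeit mild) obstacle; everything else is bottom/top bookkeeping.

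With the identities in hand the conclusion is immediate. For $x, y \in {D_k}^n$,
\begin{align*}
f(x) + f(y) &= g(\sigma(x)) + g(\sigma(y)) \\
&\ge g(\sigma(x) \wedge \sigma(y)) + g(\sigma(x) \vee \sigma(y)) \\
&\ge g(\rho(\sigma(x) \wedge \sigma(y))) + g(\rho(\sigma(x) \vee \sigma(y))) \\
&= g(\sigma(x \wedge y)) + g(\sigma(x \vee y)) = f(x \wedge y) + f(x \vee y),
\end{align*}
where the first inequality is submodularity of $g$ and the second is the retraction inequality applied to each of the two arguments. I would flag the one potential gap: the join $\sigma(x) \vee \sigma(y)$ need not lie in $E^n$ (exactly in the distinct-atom situation above), so $g$ may take the value $+\infty$ there; but then the retraction inequality $g(\rho(v)) \le g(v)$ holds trivially since $g(\rho(v)) \le +\infty$, so the chain is valid regardless. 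Hence $f$ satisfies the $k$-diamond submodular inequalities, which is what we wanted.
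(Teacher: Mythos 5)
Your proof is correct and follows essentially the same route as the paper, which proves the lemma by citing exactly the two identities $\rho_{k\text{-dia}}(\sigma_{k\text{-dia}}(x) \wedge \sigma_{k\text{-dia}}(y)) = \sigma_{k\text{-dia}}(x \wedge y)$ and $\rho_{k\text{-dia}}(\sigma_{k\text{-dia}}(x) \vee \sigma_{k\text{-dia}}(y)) = \sigma_{k\text{-dia}}(x \vee y)$ and leaving the chaining of submodularity with the retraction inequality implicit. You simply spell out the case analysis and the inequality chain in full detail.
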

\begin{proof}
	It follows from $\rho_\textrm{$k$-dia}(\sigma_\textrm{$k$-dia}(x) \wedge \sigma_\textrm{$k$-dia}(y)) = \sigma_\textrm{$k$-dia}(x \wedge y)$ and $\rho_\textrm{$k$-dia}(\sigma_\textrm{$k$-dia}(x) \vee \sigma_\textrm{$k$-dia}(y)) = \sigma_\textrm{$k$-dia}(x \vee y)$ for all $x, y \in D_k$.
\end{proof}

A canonical example of a binary $k$-diamond submodular function is the distance function $d: {D_k}^2 \rightarrow \mathbf{Q}$ of the Hasse diagram of $D_k$:
\begin{align*}
d(x,y) := \begin{cases}
0 & \text{if $x = y$},\\
1 & \text{if $\{x, y\} = \{\bot, i\}$ or $\{\top, i\}$ for some $i \in [k]$},\\
2 & \text{otherwise}.
\end{cases}
\end{align*}
One can verify that $d$ is actually $k$-diamond submodular; see~\cite[Theorem 3.6]{MPSA/H16} for a general version.
A motivation behind $d$ comes from the {\em minimum $(2,k)$-metric problem ($\textrm{MIN}_{2,k}$)}~\cite{COMB/K98}, which is one of basic problems in facility location and multiflow theory.
The problem $\textrm{MIN}_{2,k}$ asks to minimize a nonnegative sum of $d(v,x_i)$ and $d(x_i,x_j)$ over $x = (x_1,x_2,\ldots,x_n) \in {D_k}^n$.
A combinatorial strongly polynomial time algorithm to solve $\textrm{MIN}_{2,k}$ is currently not known.
If $d \in \average{\Gamma_{\rm sub}}^k_{(\rho_{k{\rm \mathchar`-dia}}, \sigma_{k{\rm \mathchar`-dia}})}$, then $\textrm{MIN}_{2,k}$ could at least be solved by combinatorial strongly polynomial time submodular function minimization algorithms~\cite{JACM/IFF01, JCTSB/S00}.
However we verified by computer calculation:
\begin{prop}
	$d \not\in \average{\Gamma_{\rm sub}}^k_{(\rho_{k{\rm \mathchar`-dia}}, \sigma_{k{\rm \mathchar`-dia}})}$.
\end{prop}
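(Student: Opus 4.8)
The plan is to reduce the claim to a finite, explicitly computable feasibility question and then to read off the obstruction as a weighted polymorphism. Recall from the discussion of submodular representability that, because $\average{\Gamma_{\rm sub}} = \Gamma_{\rm sub}$, the membership $d \in \average{\Gamma_{\rm sub}}^k_{(\rho_{k{\rm \mathchar`-dia}}, \sigma_{k{\rm \mathchar`-dia}})}$ is equivalent to the existence of a submodular function $g : \{0,1\}^{2k} \to \overline{\mathbf{Q}}$ (the arity is $2k$ since $d$ is binary) satisfying the interpolation equalities $g(\sigma_{k{\rm \mathchar`-dia}}(x), \sigma_{k{\rm \mathchar`-dia}}(y)) = d(x,y)$ for all $x,y \in D_k$, together with the retraction inequalities $g(\rho_{k{\rm \mathchar`-dia}}(u), \rho_{k{\rm \mathchar`-dia}}(w)) \le g(u,w)$ for all $u,w \in \{0,1\}^k$. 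First I would write out the image $E = \{\mathbf{0}, e_1, \dots, e_k, \mathbf{1}\}$ of $\sigma_{k{\rm \mathchar`-dia}}$ and the values it pins down, namely $g(\mathbf{0}, e_i) = g(e_i, \mathbf{0}) = g(\mathbf{1}, e_i) = 1$, $g(e_i, e_j) = 2$ for $i \ne j$, $g(\mathbf{0}, \mathbf{1}) = 2$, and $g(e_i,e_i) = g(\mathbf{0},\mathbf{0}) = g(\mathbf{1},\mathbf{1}) = 0$, noting that $\rho_{k{\rm \mathchar`-dia}}$ fixes $\mathbf{0}, e_1, \dots, e_k$ and collapses every vector of Hamming weight $\ge 2$ to $\mathbf{1}$.

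The key observation is that all of these constraints — submodularity of $g$, the interpolation equalities, and the retraction inequalities — are linear in the unknown values $\{g(v)\}_{v \in \{0,1\}^{2k}}$. Hence $d \in \average{\Gamma_{\rm sub}}^k_{(\rho_{k{\rm \mathchar`-dia}}, \sigma_{k{\rm \mathchar`-dia}})}$ holds if and only if a certain polyhedron in $\mathbf{R}^{2^{2k}}$ cut out by $O(2^{4k})$ inequalities is nonempty, which is exactly the finite decision problem of the preceding proposition. I would then certify infeasibility by LP duality: by Farkas' lemma, the absence of a feasible $g$ is witnessed by nonnegative multipliers on the submodularity and retraction inequalities and signed multipliers on the interpolation equalities whose combination yields $0 > 0$ after substituting the fixed $d$-values. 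By Theorem~\ref{thm:ExtendExpressive} the set $\average{\Gamma_{\rm sub}}^k_{(\rho_{k{\rm \mathchar`-dia}}, \sigma_{k{\rm \mathchar`-dia}})}$ is a weighted relational clone, so this Farkas certificate is precisely a weighted-polymorphism witness in the sense of Lemma~\ref{lem:exp chara}: it produces some $\omega \in {\rm wPol}(\average{\Gamma_{\rm sub}}^k_{(\rho_{k{\rm \mathchar`-dia}}, \sigma_{k{\rm \mathchar`-dia}})})$ and tuples $x^1, \dots, x^m \in D_k^2$ with $\sum_{\varphi} \omega(\varphi)\, d(\varphi(x^1,\dots,x^m)) > 0$.

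I expect the obstruction to originate in the non-distributivity of $D_k$ for $k \ge 3$. The meet and join of $D_k$ do lift through $\sigma_{k{\rm \mathchar`-dia}}, \rho_{k{\rm \mathchar`-dia}}$ (this is the content of the lemma preceding the proposition), so the ``$k$-diamond submodularity'' weighted polymorphism given by $\omega(\wedge) = \omega(\vee) = 1$ and $\omega(e^{(2)}_1) = \omega(e^{(2)}_2) = -1$ lies in ${\rm wPol}$; but $d$ is $k$-diamond submodular, so this $\omega$ alone satisfies $\sum_{\varphi}\omega(\varphi) d(\varphi(x,y)) \le 0$ and cannot separate $d$. The separating $\omega$ must instead come from higher-arity operations obtained by pushing Boolean-block term operations in $\wedge, \vee$ — such as the median $(a\wedge b)\vee(b\wedge c)\vee(a\wedge c)$, which is a polymorphism of $\Gamma_{\rm sub}$ — through $\sigma_{k{\rm \mathchar`-dia}}$ and $\rho_{k{\rm \mathchar`-dia}}$; on the non-distributive $D_k$ these descend to operations that are \emph{not} lattice terms, and it is exactly the extra inequalities they impose, together with the retraction inequalities that pull the weight-$\ge 2$ vectors (the images of joins $e_i \vee e_j$ of distinct atoms) down to $\mathbf{1}$, that $d$ violates.

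The main obstacle is producing and verifying such a certificate in closed form, uniformly in $k$: the ambient dimension $2^{2k}$ and the number $O(2^{4k})$ of constraints grow exponentially, so a direct hand computation is only realistic for small $k$ (e.g. $k = 3$), where infeasibility is a finite LP check and yields the stated computer verification. A fully human proof would require isolating a constant-size ``core'' subconfiguration of points in $\{0,1\}^{2k}$ — plausibly one supported on $\mathbf{0}, \mathbf{1}$ and three atoms together with their pairwise joins — whose submodularity and retraction constraints are already inconsistent with the prescribed $d$-values, and then arguing that this core persists for every $k \ge 3$. Extracting this explicit, $k$-uniform Farkas multiplier (equivalently, the explicit weighted polymorphism) is the hard part; the finite-dimensional LP reduction above is what makes the computer verification rigorous in the meantime.
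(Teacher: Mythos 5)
Your proposal takes essentially the same route as the paper: the paper offers no hand proof of this proposition, but simply instantiates the preceding proposition (membership in $\average{\Gamma_{\rm sub}}^k_{(\rho,\sigma)}$ reduces, via $\average{\Gamma_{\rm sub}}=\Gamma_{\rm sub}$, to nonemptiness of an explicit polyhedron in $\mathbf{R}^{2^{2k}}$ cut out by the submodularity, interpolation, and retraction constraints) and reports that this finite feasibility check was performed by computer. Your interpolation equalities and retraction inequalities are the correct instantiation of that reduction, and your closing concession that the result ultimately rests on the computer LP check is exactly the paper's position; the Farkas/weighted-polymorphism certificate and the $k$-uniform ``core'' you sketch are not carried out in the paper either.
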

Although this attempt failed, we hope that the reduction idea considered in this section will grow up to be a useful tool of algorithm design.

\subsection{Extended primitive positive interpretation}
One of the referees pointed out a similarity between extended expressive power and {\it primitive positive interpretation} ({\it pp-interpretation}, for short).
Here pp-interpretation is a well-known concept in constraint satisfaction problems,
and its generalization for valued constraint satisfaction problems is defined as follows (see e.g.,~\cite[Definition~5.3]{SICOMP/TZ17}).
Let $\Gamma_D$ and $\Gamma_F$ be languages on $D$ and on $F$, respectively.
Let $k$ be a positive integer with $|D| \leq |F|^k$, $E$ a subset of $F^k$ with $|D| \leq |E|$,
and  $\theta : E \rightarrow D$ a surjective map.
We say that $\Gamma_D$ has a {\it pp-interpretation} in $\Gamma_F$ with parameters $(k, E, \theta)$
if $\average{\Gamma_F}$ contains the following weighted relations:
\begin{itemize}
	\item $\delta_E : F^k \rightarrow \overline{\mathbf{Q}}$ defined by $\delta_E(x) := 0$ for $x \in E$ and $\delta_E(x) := +\infty$ for $x \not\in E$,
	\item $\theta^{-1}(f_{=})$, and
	\item $\theta^{-1}(f)$ for any $f \in \Gamma_D$,
\end{itemize}
where, for $f : D^n \rightarrow \overline{\mathbf{Q}}$, $\theta^{-1}(f) : F^{kn} \rightarrow \overline{\mathbf{Q}}$ is a function satisfying $\theta^{-1}(f)(x_1, x_2, \dots, x_n) = f(\theta(x_1), \theta(x_2), \dots, \theta(x_n))$ for every $x_1, x_2, \dots, x_n \in E$.

It seems that extended expressive power and pp-interpretation cannot be compared with each other,
i.e., one is not a special case of the other.
Here we introduce the concept of {\it extended pp-interpretation}, which generalize both extended expressive power and pp-interpretation.
We define $\Gamma_D$, $\Gamma_F$, $k$, and $E$ as above.
Let $\tilde{E}$ be a subset of $E$ with $|\tilde{E}| = |D|$,
$\rho : E \rightarrow \tilde{E}$ a retraction,
and $\sigma : \tilde{E} \rightarrow D$ a bijection.
We say that $\Gamma_D$ has an {\it extended pp-interpretation} in $\Gamma_F$ with parameters $(k, E, \tilde{E}, \rho, \sigma)$
if $\average{\Gamma_F}$ contains the following weighted relations:
\begin{itemize}
	\item $\delta_E : F^k \rightarrow \overline{\mathbf{Q}}$ defined by $\delta_E(x) := 0$ for $x \in E$ and $\delta_E(x) := +\infty$ for $x \not\in E$,
	\item $(\sigma \circ \rho)^{-1}(f_{=})$, and
	\item $(\sigma \circ \rho)^{-1}(f)$ for any $f \in \Gamma_D$,
\end{itemize}
where, for $f : D^n \rightarrow \overline{\mathbf{Q}}$, $(\sigma \circ \rho)^{-1}(f) : F^{kn} \rightarrow \overline{\mathbf{Q}}$ is a function satisfying $(\sigma \circ \rho)^{-1}(f)(x_1, x_2, \dots, x_n) \geq (\sigma \circ \rho)^{-1}(f)(\rho(x_1), \rho(x_2), \dots, \rho(x_n)) = f(\sigma(\rho(x_1)), \sigma(\rho(x_2)), \dots, \sigma(\rho(x_n)))$ for every $x_1, x_2, \dots, x_n \in E$.
We see that the minimization of the sum of cost functions in $\Gamma_D$ can reduce to the minimization of the sum of corresponding cost functions in $\average{\Gamma_F}$.

The pp-interpretation is captured by {\it weighted varieties}, introduced by Kozik--Ochremiak~\cite{ICALP/KO15}.
We do not know whether the extended expressive power and the extended pp-interpretation can be captured by weighted varieties.
This might be interesting future work.

\section{Proofs}\label{sec:proof}
\subsection{Proof of Theorem~\ref{thm:ExtendExpressive}}\label{subsec:thm:ExtendExpressive}
Let us prove that $\average{\Gamma}^k_{(\rho, \sigma)}$ contains the weighted equality relation on $D$,
and is closed under nonnegative scaling and addition of constants, addition, and partial minimization.
By the definition of expressive power, 
$\average{\Gamma}$ contains the weighted equality relation $g_{=}$ on $F$.
Let $h_{=}(u,v) := g_{=}(u_1,v_1)+g_=(u_2,v_2) \cdots + g_=(u_k,v_k)$ for $u = (u_1,u_2,\dots,u_k), v = (v_1,v_2,\dots,v_k) \in F^k$.
Here it is clear that $h_= \in \average{\Gamma}^k$, $h_=$ is the weighted equality relation on $F^k$, and $h_{=}(\rho(u), \rho(v)) = h_{=}(u,v) = 0$ for $(u,v) \in \textrm{dom }h_=$.
Let $f_= : D^2 \rightarrow \overline{\mathbf{Q}}$ be a cost function defined by $f_=(x,y) := h_{=}(\sigma(x), \sigma(y))$ for $(x,y) \in D^2$.
Then $f_= \in \average{\Gamma}^k_{(\rho, \sigma)}$ and $f_=$ is the weighted equality relation on $D$.

The fact that $\average{\Gamma}^k_{(\rho, \sigma)}$ is closed under nonnegative scaling and addition of constants is trivial.

Let $h : D^n \rightarrow \overline{\mathbf{Q}}$ be a cost function defined by for some $s_1 : [r_f] \rightarrow [n]$ and $s_2 : [r_g] \rightarrow [n]$,
\begin{align*}
h(x_1,x_2,\dots,x_n) := f(x_{s_1(1)},x_{s_1(2)}, \dots, x_{s_1(r_f)}) + g(x_{s_2(1)}, x_{s_2(2)}, \dots, x_{s_2(r_g)})
\end{align*}
for all $x_1,x_2,\dots,x_n \in D$,
where $f, g \in \average{\Gamma}^k_{(\rho, \sigma)}$.
Since $f, g \in \average{\Gamma}^k_{(\rho, \sigma)}$,
there exist $f', g' \in \average{\Gamma}^k$ satisfying $f'(\rho(v)) \leq f'(v)$ for $v \in \textrm{dom $f'$}$, $g'(\rho(v)) \leq g'(v)$ for $v \in \textrm{dom $g'$}$, and
\begin{align*}
f(x_1,x_2, \dots, x_{r_f}) &= f'(\sigma(x_1), \sigma(x_2),\dots, \sigma(x_{r_f}))\qquad (x_1,x_2,\dots,x_{r_f} \in D),\\
g(x_1,x_2, \dots, x_{r_g}) &= g'(\sigma(x_1),\sigma(x_2),\dots, \sigma(x_{r_g}))\qquad (x_1,x_2,\dots,x_{r_g} \in D).
\end{align*}
We define $h' : (F^k)^n \rightarrow \overline{\mathbf{Q}}$ by
\begin{align*}
h'(v_1,v_2,\dots,v_n) := f'(v_{s_1(1)},v_{s_1(2)}, \dots, v_{s_1(r_f)}) + g'(v_{s_2(1)},v_{s_2(2)}, \dots, v_{s_2(r_g)})
\end{align*}
for all $v_1,v_2,\dots,v_n \in F^k$.
Then we have $h(x_1,x_2,\dots,x_n) = h'(\sigma(x_1),\sigma(x_2), \dots, \sigma(x_n))$.
Since $f', g' \in \average{\Gamma}^k$, we obtain $h' \in \average{\Gamma}^k$.
Furthermore it holds that $h'(\rho(v)) \leq h'(v)$ for $v \in \textrm{dom $h'$}$.
That is, $h \in \average{\Gamma}^k_{(\rho, \sigma)}$,
and we know that $\average{\Gamma}^k_{(\rho, \sigma)}$ is closed under addition.

Let $h : D^n \rightarrow \overline{\mathbf{Q}}$ be defined by
\begin{align*}
h(x_1,\dots,x_n) := \min_{x_{n+1},\dots,x_{n+m} \in D}f(x_1,\dots,x_n, x_{n+1},\dots,x_{n+m})
\end{align*}
for all $x_1,\dots,x_n \in D$,
where $f \in \average{\Gamma}^k_{(\rho, \sigma)}$.
Since $f \in \average{\Gamma}^k_{(\rho, \sigma)}$,
there exists $f' \in \average{\Gamma}^k$ satisfying $f'(\rho(v)) \leq f'(v)$ for $v \in \textrm{dom $f'$}$ and
\begin{align*}
f(x_1,x_2, \dots, x_{n+m}) = f'(\sigma(x_1),\sigma(x_2),\dots, \sigma(x_{n+m}))\qquad (x_1,x_2,\dots,x_{n+m} \in D).
\end{align*}
Here we define $h' : (F^k)^n \rightarrow \overline{\mathbf{Q}}$ by
\begin{align*}
h'(v_1,\dots,v_n) := \min_{v_{n+1},\dots,v_{n+m} \in F^k} f'(v_1,\dots,v_n, v_{n+1}, \dots, v_{n+m})
\end{align*}
for all $v_1,v_2,\dots, v_n \in F^k$.
Since $f' \in \average{\Gamma}^k$, we obtain $h' \in \average{\Gamma}^k$.
Furthermore it holds that $h'(\rho(v)) \leq h'(v)$.
Indeed,
\begin{align*}
h'(v_1,\dots,v_n) &= \min_{v_{n+1}, \dots, v_{n+m} \in F^k} h(v_1,\dots,v_n,v_{n+1}, \dots, v_{n+m})\\
&\geq \min_{v_{n+1}, \dots, v_{n+m} \in F^k} h(\rho(v_1),\dots,\rho(v_n),\rho(v_{n+1}), \dots, \rho(v_{n+m}))\\
&= \min_{v_{n+1}, \dots, v_{n+m} \in F^k} h(\rho(v_1),\dots,\rho(v_n),v_{n+1}, \dots, v_{n+m})\\
&= h'(\rho(v_1), \dots, \rho(v_n)).
\end{align*}
Also we have
\begin{align*}
h(x_1,\dots,x_n) &= \min_{x_{n+1},\dots,x_{n+m} \in D}f(x_1,\dots,x_n, x_{n+1},\dots,x_{n+m})\\
&= \min_{x_{n+1},\dots,x_{n+m} \in D}f'(\sigma(x_1),\dots,\sigma(x_n), \sigma(x_{n+1}), \dots, \sigma(x_{n+m}))\\
&= \min_{v_{n+1},\dots,v_{n+m} \in F^k}f'(\sigma(x_1),\dots,\sigma(x_n), v_{n+1}, \dots, v_{n+m})\\
&= h'(\sigma(x_1), \dots, \sigma(x_n)).
\end{align*}
Hence $h \in \average{\Gamma}^k_{(\rho, \sigma)}$,
and we know that $\average{\Gamma}^k_{(\rho, \sigma)}$ is closed under partial minimization.

\subsection{Proof of Theorem~\ref{thm:{0,1}mainExpressive}}\label{subsec:thm:{0,1}mainExpressive}
\begin{lem}\label{lem:first subsetneq}
$\average{\Gamma_{\rm sub,2}} \subsetneq \overline{\average{\Gamma_{\rm sub,2}}}_{\{0,1\}}$.
\end{lem}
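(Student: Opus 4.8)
The plan is to establish the strict inclusion by producing a single cost function that belongs to $\overline{\average{\Gamma_{\rm sub,2}}}_{\{0,1\}}$ but not to $\average{\Gamma_{\rm sub,2}}$. Because $\average{\Gamma_{\rm sub,2}} \subseteq \Gamma_{\rm sub}$ (network representable functions are submodular, by~\cite{TPAMI/KZ04}), it suffices to find a function that fails submodularity yet still admits a $(2,\rho^*,\sigma^*_1)$-representation; membership in $\average{\Gamma_{\rm sub,2}}^2_{(\rho^*,\sigma^*_1)}$ yields membership in $\overline{\average{\Gamma_{\rm sub,2}}}_{\{0,1\}}$ by definition. My candidate is the binary conjunction $f(x_1,x_2) := x_1 x_2$, that is $f(1,1)=1$ and $f=0$ otherwise. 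This $f$ is monotone but supermodular, and in particular not submodular: $f(1,0)+f(0,1)=0 < 1 = f(0,0)+f(1,1)$. Hence $f \notin \Gamma_{\rm sub} \supseteq \average{\Gamma_{\rm sub,2}}$.

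The crux is to exhibit a witness $g' \in \average{\Gamma_{\rm sub,2}}^2$ with $g'(\rho^*(v_1),\rho^*(v_2)) \le g'(v_1,v_2)$ and $f(x_1,x_2) = g'(\sigma^*_1(x_1),\sigma^*_1(x_2))$. The guiding observation is that $\sigma^*_1(x) = (x,1-x)$ hands us both a variable and its complement, and flipping a single argument converts the supermodular product into a submodular one. Writing a generic point of $(\{0,1\}^2)^2$ as $(a_1,b_1,a_2,b_2)$, I would take $g'(a_1,b_1,a_2,b_2) := a_1\,(1-b_2)$, which depends only on the first coordinate of the first block and the second coordinate of the second block. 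On encoded inputs one has $b_2 = 1-x_2$, so $g'(\sigma^*_1(x_1),\sigma^*_1(x_2)) = x_1(1-(1-x_2)) = x_1 x_2 = f(x_1,x_2)$, as needed.

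It then remains to verify the two defining conditions for $g'$. For membership in $\average{\Gamma_{\rm sub,2}}^2$: the binary function $(a_1,b_2) \mapsto a_1(1-b_2)$ is submodular (its only nontrivial inequality reads $1+0 \ge 0+0$), hence network representable, and padding it with the inessential coordinates $b_1,a_2$ leaves it network representable of arity $4$, a multiple of $k=2$; thus $g' \in \average{\Gamma_{\rm sub,2}}^2$. For the retraction inequality, note that the first coordinate of $\rho^*(a_1,b_1)$ equals $1$ exactly when $(a_1,b_1)=(1,0)$, so it is bounded above by $a_1$, while the second coordinate of $\rho^*(a_2,b_2)$ equals $0$ exactly when $(a_2,b_2)=(1,0)$, so it is bounded below by $b_2$. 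Consequently both nonnegative factors of $g'$ can only weakly decrease under $\rho^*$, which gives $g'(\rho^*(v_1),\rho^*(v_2)) \le g'(v_1,v_2)$ for every $v_1,v_2 \in \{0,1\}^2$. Since $g'$ is finite everywhere, this covers all of ${\rm dom}\,g'$.

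I expect the main obstacle to be spotting the witness $g'$: the product $x_1 x_2$ is intrinsically supermodular, so the construction must exploit the redundancy built into the two-node encoding, and it is precisely the single complement in $a_1(1-b_2)$ that simultaneously reconciles the representation equation with submodularity and with the retraction constraint. Everything after this choice is a finite, direct check. As a concluding step I would record that $f$ therefore witnesses $\average{\Gamma_{\rm sub,2}} \subsetneq \overline{\average{\Gamma_{\rm sub,2}}}_{\{0,1\}}$.
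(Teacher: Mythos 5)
Your proposal is correct and follows essentially the same route as the paper: the paper uses the very same counterexample $f(1,1)=1$, $f=0$ otherwise, rejects it from $\average{\Gamma_{\rm sub,2}}$ by non-submodularity, and certifies $f \in \average{\Gamma_{{\rm sub},2}}^2_{(\rho^*,\sigma^*_1)}$ via the network with arcs $(1^1,2^2)$ and $(2^1,1^2)$ of capacity $1/2$, whose cut function is exactly the symmetrization $\tfrac12 a_1(1-b_2)+\tfrac12 a_2(1-b_1)$ of your witness $g'=a_1(1-b_2)$. Your version just writes the witness as a submodular function rather than a network and spells out the retraction inequality explicitly; both checks go through.
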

\begin{proof}
It is obvious that $\average{\Gamma_{\rm sub,2}} \subseteq \overline{\average{\Gamma_{{\rm sub},2}}}_{\{0,1\}}$.
Let $f : \{0, 1\}^2 \rightarrow \mathbf{Q}$ be a function defined by $f(1,1) := 1$ and $f(x,y) := 0$ for other $(x,y)$.
Since $0 = f(0,1) + f(1,0) < f(0,0) + f(1,1) = 1$, $f$ is not submodular.
Then $f \not\in \average{\Gamma_{{\rm sub},2}}$.
However it holds that $f \in \average{\Gamma_{{\rm sub},2}}^2_{(\rho^*, \sigma^*_1)}$, where $\rho^*$ and $\sigma^*_1$ are defined in Section~\ref{subsec: results}.
Indeed, the network $G = (V, A; c)$ represents $f$,
where $V = \{s,t,1^1,1^2,2^1,2^2\}$, $A = \{(1^1, 2^2), (2^1, 1^2)\}$, and $c(1^1, 2^2) = c(2^1, 1^2) = 1/2$.
\end{proof}

For $x \in \{0,1\}^n$, let $\overline{x} \in \{0,1\}^n$ be $\overline{x}_i := 1-x_i$ for $i \in [n]$.
For $f : \{0,1\}^n \rightarrow \overline{\mathbf{Q}}$, let $\overline{f} : \{0,1\}^n \rightarrow \overline{\mathbf{Q}}$ be a function defined by $x \mapsto f(\overline{x})$.
\begin{lem}\label{lem:f and overline f}
	Suppose that $f$ is a function on $\{0,1\}^n$.
	$f$ is $(\Gamma, \rho, \sigma)$-representable if and only if
	$\overline{f}$ is $(\Gamma, \rho, \overline{\sigma})$-representable,
	where $\overline{\sigma}$ is defined by
	$\overline{\sigma}(x) := \sigma(\overline{x})$ for $x \in \{0,1\}$.
\end{lem}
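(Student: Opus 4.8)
The plan is to exploit the fact that passing from $\sigma$ to $\overline{\sigma}$ amounts to nothing more than precomposing the substitution map with the bit-flip $x \mapsto \overline{x}$ on $\{0,1\}$, and that this bit-flip can be absorbed entirely into the argument of the witnessing function without disturbing $g$, $\rho$, $E$, $k$, or $\Gamma$. First I would record that $\overline{\sigma}$ is again a bijection from $\{0,1\}$ onto the \emph{same} set $E$, being the composite of the bijection $\sigma$ with the involution $x \mapsto \overline{x}$; hence $(\Gamma, \rho, \overline{\sigma})$ is a legitimate parameter triple in the sense of Section~\ref{subsec:VCSP perspective}, with $\rho$ and $E$ left untouched.

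For the forward implication, I would suppose $f$ is $(\Gamma, \rho, \sigma)$-representable, witnessed by some $g \in \average{\Gamma}^k$ with $g(\rho(v)) \leq g(v)$ for all $v \in \textrm{dom }g$ and $f(x_1, \dots, x_n) = g(\sigma(x_1), \dots, \sigma(x_n))$, and then claim that the \emph{same} $g$ witnesses $(\Gamma, \rho, \overline{\sigma})$-representability of $\overline{f}$. The crux is the single chain of identities, obtained by applying $\overline{\sigma}(x) = \sigma(\overline{x})$ coordinatewise:
\begin{align*}
g(\overline{\sigma}(x_1), \dots, \overline{\sigma}(x_n)) = g(\sigma(\overline{x}_1), \dots, \sigma(\overline{x}_n)) = f(\overline{x}) = \overline{f}(x).
\end{align*}
Because $g$ is unchanged, its membership $g \in \average{\Gamma}^k$ and the retraction inequality $g(\rho(v)) \leq g(v)$ hold verbatim, so every condition in the definition of representability by $(\Gamma, \rho, \overline{\sigma})$ is met.

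Finally I would obtain the converse for free by observing that the construction is an involution: $\overline{\overline{\sigma}} = \sigma$, since $\overline{\overline{\sigma}}(x) = \overline{\sigma}(\overline{x}) = \sigma(\overline{\overline{x}}) = \sigma(x)$, and likewise $\overline{\overline{f}} = f$. Applying the forward implication to the pair $(\overline{f}, \overline{\sigma})$ then yields that $(\Gamma, \rho, \overline{\sigma})$-representability of $\overline{f}$ implies $(\Gamma, \rho, \sigma)$-representability of $f$. I do not anticipate any genuine obstacle; the only point requiring care is the bookkeeping verifying that $\rho$, $E$, $k$, and $\Gamma$ are literally unaltered, so that the one witness $g$ transfers without any modification.
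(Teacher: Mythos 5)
Your proposal is correct and follows essentially the same argument as the paper: the same witness $g$ is reused unchanged, and the identity $\overline{f}(x) = g(\overline{\sigma}(x_1),\dots,\overline{\sigma}(x_n))$ is verified via $\overline{\sigma}(x) = \sigma(\overline{x})$. Your explicit treatment of the converse via the involution $\overline{\overline{\sigma}} = \sigma$ is a point the paper leaves implicit, but it is the intended reading.
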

\begin{proof}
	Suppose that $f \in \average{\Gamma}^k_{(\rho, \sigma)}$,
	where $\rho : \{0,1\}^k \rightarrow \{0,1\}^k$ and $\sigma : \{0,1\} \rightarrow \{0,1\}^k$.
	Then there exists $g \in \average{\Gamma}^k$ satisfying $g(\rho(v)) \leq g(v)$ for $v \in \textrm{dom }g$ and
	\begin{align*}
		f(x_1,x_2,\dots,x_n) = g(\sigma(x_1),\sigma(x_2), \dots, \sigma(x_n))\qquad (x_1,x_2,\dots,x_n \in \{0,1\}).
	\end{align*}
Hence we have
\begin{align*}
	\overline{f}(x_1,x_2,\dots,x_n) &= f(\overline{x_1}, \overline{x_2}, \dots, \overline{x_n})\\
	&= g(\sigma(\overline{x_1}), \sigma(\overline{x_2}), \dots, \sigma(\overline{x_n}))\\
	&= g(\overline{\sigma}(x_1), \overline{\sigma}(x_2), \dots, \overline{\sigma}(x_n))
\end{align*}
for $(x_1,x_2,\dots,x_n) \in \{0,1\}^n$.
This means $\overline{f} \in \average{\Gamma}^k_{(\rho, \overline{\sigma})}$.
\end{proof}

\begin{lem}\label{lem:network rep}
	Suppose that $f$ is a function on $\{0,1\}^n$.
	$f$ is $(1,{\rm id}, {\rm id})$-network representable if and only if $\overline{f}$ is $(1,{\rm id}, {\rm id})$-network representable.
\end{lem}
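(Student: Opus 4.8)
The plan is to exhibit an explicit ``reverse-and-swap'' transformation of networks and check that it converts a representation of $f$ into a representation of $\overline{f}$. First I would reduce to a single direction: since $\overline{\overline{f}} = f$ and $\overline{\mathrm{id}} = \mathrm{id}$ is irrelevant here (we work in the original $(1,\mathrm{id},\mathrm{id})$ setting), it suffices to show that if $f$ is $(1,\mathrm{id},\mathrm{id})$-network representable then so is $\overline{f}$; applying the same implication to $\overline{f}$ yields the converse. So assume a network $G = (V,A;c)$ with $V \supseteq \{s,t,1,\dots,n\}$ and a constant $\kappa \in \mathbf{Q}$ satisfy $f(x) = \min\{C(X) \mid X \text{ is an $s$-$t$ cut},\ X(i)=x_i \text{ for } i \in [n]\} + \kappa$.

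The construction I would use keeps the node set $V$ fixed, reverses every arc, and interchanges the two terminals. Concretely, let $\tau : V \to V$ be the involution with $\tau(s)=t$, $\tau(t)=s$, and $\tau(w)=w$ for all other $w$ (in particular $\tau$ fixes every variable node $i \in [n]$). Define $G' = (V,A';c')$ by $A' := \{(\tau(v),\tau(u)) \mid (u,v)\in A\}$ with $c'(\tau(v),\tau(u)) := c(u,v)$. I then claim the map $X' \mapsto X := V \setminus \tau(X')$ is a bijection from the $s$-$t$ cuts of $G'$ onto the $s$-$t$ cuts of $G$. The terminal bookkeeping is immediate: if $s \in X'$ and $t \notin X'$, then $t \in \tau(X')$ and $s \notin \tau(X')$, so $s \in X$ and $t \notin X$, confirming $X$ is a genuine $s$-$t$ cut of $G$; the inverse map has the same form, so it is a bijection. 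The capacity identity $C'(X') = C(X)$ is the heart of the argument: rewriting each arc of $A'$ as $(\tau(v),\tau(u))$ coming from $(u,v)\in A$, the condition ``$\tau(v)\in X'$, $\tau(u)\notin X'$'' becomes ``$v \in \tau(X')$, $u \notin \tau(X')$'', i.e. ``$u \in X$, $v \notin X$'', so the two cut sums agree term by term.

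The final ingredient is the variable correspondence. Because $\tau$ fixes each $i \in [n]$, we have $i \in \tau(X') \iff i \in X'$, hence $X(i) = 1 - X'(i)$ for every $i \in [n]$; that is, the bijection complements exactly the values read off at the variable nodes while leaving the extra nodes free to be minimized over on both sides. Combining these facts, for each $x \in \{0,1\}^n$ the minimum of $C'(X')$ over $s$-$t$ cuts $X'$ of $G'$ with $X'(i)=x_i$ equals the minimum of $C(X)$ over $s$-$t$ cuts $X$ of $G$ with $X(i)=1-x_i=\overline{x}_i$, which is $f(\overline{x}) - \kappa = \overline{f}(x) - \kappa$. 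Thus $G'$ together with the \emph{same} constant $\kappa$ represents $\overline{f}$, completing one direction and hence, by the symmetry noted above, the lemma. I expect the only real obstacle to be notational care: one must track the composition of arc reversal with the terminal swap $\tau$ and with set complementation so that both the terminals land on the correct sides and the variable indicators come out complemented rather than unchanged; the capacity equality and the bijection themselves are routine once $\tau$ is fixed.
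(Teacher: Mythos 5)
Your construction is exactly the one in the paper: reversing every arc and swapping the roles of $s$ and $t$ (the paper writes this out case by case, sending $(i,j)$ to $(j,i)$ for internal arcs, $(s,i)$ to $(i,t)$, and $(i,t)$ to $(s,i)$, which is precisely your $(u,v)\mapsto(\tau(v),\tau(u))$). Your verification via the bijection $X'\mapsto V\setminus\tau(X')$ is correct and simply fills in details the paper leaves implicit, so the proposal matches the paper's proof.
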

\begin{proof}
	Suppose that $f$ is represented by a network $G = (\{s,t\} \cup V, A; c)$.
	Then $\overline{f}$ is represented by $\overline{G} = (\{s,t\} \cup V, \overline{A}; \overline{c})$, where
	\begin{align*}
		&\overline{A} := \{(j, i) \mid i, j \in V,\ (i, j) \in A\} \cup \{(i, t) \mid (s, i) \in A\} \cup \{(s, i) \mid (i, t) \in A\},\\
		&\overline{c}(i, j) := \begin{cases}
			c(j, i) & \text{if $i,j \in V$},\\
			c(s, i) & \text{if $j = t$}, \\
			c(j, t) & \text{if $i = s$}.
		\end{cases}
	\end{align*}
\end{proof}

\begin{prop}\label{prop:mono}
$\overline{\average{\Gamma_{\rm sub,2}}}_{\{0,1\}} \subseteq \average{\Gamma_{\rm sub,2}} \cup \Gamma_{\rm mono}$.
\end{prop}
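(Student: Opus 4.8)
The plan is to unpack the representation and split into cases according to how the two‑element image $E=\{\sigma(0),\sigma(1)\}$ sits inside the Boolean cube. Write $a:=\sigma(0)$, $b:=\sigma(1)$, $m:=a\wedge b$, $M:=a\vee b$, and let $g\in\average{\Gamma_{\rm sub,2}}^{k}$ be a witness, so $g$ is submodular (indeed network representable), $g(\rho(v))\le g(v)$ for $v\in\mathrm{dom}\,g$, and $f(x)=g(\sigma(x_1),\dots,\sigma(x_n))$. A convenient first reduction is the fiber identity $f(x)=\min\{g(v)\mid \rho(v)=\sigma(x)\}$: the point $v=\sigma(x)$ is feasible, and by the retraction inequality applied blockwise every feasible $v$ satisfies $g(v)\ge g(\rho(v))=g(\sigma(x))$. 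I then distinguish whether $a$ and $b$ are comparable.

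In the comparable case I first use Lemmas~\ref{lem:f and overline f} and~\ref{lem:network rep} to replace $\sigma$ by $\overline{\sigma}$ and $f$ by $\overline{f}$, which lets me assume $a\le b$; note that membership in $\average{\Gamma_{\rm sub,2}}$ is preserved by this swap. Then $\sigma$ is a lattice embedding of the chain $\{0,1\}$: in each block the coordinates split into those fixed to $0$, those fixed to $1$, and those equal to $x_i$, namely the set $\{l:a^l<b^l\}$, which is nonempty since $a\ne b$. Hence $f$ arises from $g$ by adding unary constant constraints on the fixed coordinates, gluing the remaining coordinates of each block with the weighted equality relation $f_{=}$, and minimizing out the redundant copies. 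Each ingredient is network representable and the combining steps are weighted relational clone operations, so $f\in\average{\Gamma_{\rm sub,2}}$.

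When $a$ and $b$ are incomparable we have $m<a,b<M$ strictly, and I branch on $(\rho(m),\rho(M))\in\{a,b\}^2$. If $\rho(m)=\rho(M)=a$, then fixing all variables but one and letting $h$ be the restricted single‑block function, submodularity of $g$ and the retraction inequality give $h(a)+h(b)\ge h(m)+h(M)\ge h(\rho(m))+h(\rho(M))=2h(a)$, whence $h(b)\ge h(a)$; since this holds for every variable and every fixing of the others in $E$, $f$ is monotone nondecreasing, so $f\in\Gamma_{\rm mono}$. The symmetric branch $\rho(m)=\rho(M)=b$ yields $f$ monotone nonincreasing.

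The remaining branch $\{\rho(m),\rho(M)\}=\{a,b\}$ is the crux. A blockwise computation shows that in the orientation $\rho(m)=a,\rho(M)=b$ one has $\rho(\sigma(x)\wedge\sigma(y))=\sigma(x\wedge y)$ and $\rho(\sigma(x)\vee\sigma(y))=\sigma(x\vee y)$ (in the opposite orientation the meet and join are exchanged); combined with the retraction inequality and submodularity of $g$ this forces $f$ itself to be submodular. It then remains to upgrade submodularity to network representability, and this is where I expect the real difficulty, since submodular functions need not be network representable (the \v{Z}ivn\'y--Cohen--Jeavons example). The plan is to realize the fiber formula inside the clone: adjoin to $g$ the coupling relations $R(x_i,v_i):=0$ if $\rho(v_i)=\sigma(x_i)$ and $+\infty$ otherwise, and minimize out all block variables $v_i$; this reproduces $f$, provided each $R$ lies in $\average{\Gamma_{\rm sub,2}}$. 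Equivalently, the induced single‑block equalities $g(\cdots a\cdots)=g(\cdots m\cdots)$ and $g(\cdots b\cdots)=g(\cdots M\cdots)$ should let one discard the coordinates on which $a$ and $b$ cross, re‑encode $f$ by an order‑preserving map, and fall back to the comparable case. The delicate point, and the main obstacle, is propagating these equalities from the situation where all other blocks lie in $E$ to the situation where other blocks have already been displaced onto $m$ or $M$; I anticipate an induction on the number of displaced blocks driven by submodularity of $g$, invoking the Billionnet--Minoux equality $\average{\Gamma_{\rm sub,2}}=\average{\Gamma_{\rm sub,3}}$ to dispatch the low‑arity coupling relations. The bookkeeping of this reduction, rather than any single inequality, is what I expect to be hard.
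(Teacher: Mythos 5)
Your case analysis is the right one and coincides with the paper's: the comparable case (handled by fixing and gluing coordinates and then minimizing out), and the two sub-cases $\rho(m)=\rho(M)\in\{a,b\}$ giving monotone non-decreasing resp.\ non-increasing $f$, are all correct and essentially identical to what the paper does. The gap is in the remaining sub-case $\{\rho(m),\rho(M)\}=\{a,b\}$, which you yourself flag as the crux and leave as a plan. What you actually establish there --- that $f$ is submodular --- is true but, as you note, insufficient, since $\average{\Gamma_{\rm sub,2}}\subsetneq\Gamma_{\rm sub}$. Your first proposed repair, adjoining the coupling relation $R(x_i,v_i)$ with feasible set $\{(x,v):\rho(v)=\sigma(x)\}$, does not work in general: a crisp relation in $\average{\Gamma_{\rm sub,2}}$ must have a feasible set closed under coordinatewise $\wedge$ and $\vee$, and for an arbitrary retraction $\rho$ the set $\bigl(\rho^{-1}(a)\times\{0\}\bigr)\cup\bigl(\rho^{-1}(b)\times\{1\}\bigr)$ need not be a sublattice, because $\rho$ is unconstrained on $\{0,1\}^k\setminus E$. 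Your second proposed repair --- re-encode $f$ by an order-preserving map and fall back to the comparable case --- is exactly what the paper does, but the induction on ``displaced blocks'' that you anticipate is neither carried out nor what is actually needed.

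The missing ingredient is the paper's Claim~\ref{cl:sigma}: setting $\sigma'(0):=m$ and $\sigma'(1):=M$, one has $f(x)=g(\sigma'(x_1),\dots,\sigma'(x_n))$ for all $x$. This is proved by three short chains of inequalities, each of the form ``submodularity of $g$, then the retraction inequality, returns you to the starting value, hence every step is an equality'': first pair $g(\sigma(x))$ with $g(\sigma(1),\dots,\sigma(1))$ to conclude $f(x)=g(\sigma(x_1)\wedge\sigma(1),\dots,\sigma(x_n)\wedge\sigma(1))$; then pair $g(\sigma(\overline{x}))$ with $g(\sigma(0),\dots,\sigma(0))$; finally pair the two resulting tuples, whose coordinatewise join is precisely $(\sigma'(x_1),\dots,\sigma'(x_n))$. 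No induction over blocks is required --- the all-ones and all-zeros assignments do the propagation globally. Once the claim is in hand, $\sigma'(0)$ and $\sigma'(1)$ are comparable and $\rho$ restricted to $\{m,M\}$ is consistent with $\sigma'$, so your (correct) comparable-case argument finishes. Until an argument of this kind is supplied, the sub-case $\{\rho(m),\rho(M)\}=\{a,b\}$ --- and hence the proposition --- remains unproved.
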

\begin{proof}
Take arbitrary positive integer $k$.
There are three cases of a map $\sigma : \{0,1\} \rightarrow \{0,1\}^k$:
(i) $\sigma(0) \wedge \sigma(1) = \sigma(0)$,
(ii) $\sigma(0) \wedge \sigma(1) = \sigma(1)$,
and (iii) $\sigma(0) \neq \left(\sigma(0) \wedge \sigma(1)\right) \neq \sigma(1)$.
We prove $\average{\Gamma_{{\rm sub},2}}^k_{(\rho, \sigma)} \subseteq \average{\Gamma_{{\rm sub},2}} \cup \Gamma_{\rm mono}$ for all cases of $\sigma$ and a retraction $\rho : \{0,1\}^k \rightarrow \{\sigma(0), \sigma(1)\}$ in the following.
Suppose that the arity of a function $f$ is equal to $n$.

\paragraph{(i) $\sigma(0) \wedge \sigma(1) = \sigma(0)$.}
Let us prove $f \in \average{\Gamma_{{\rm sub},2}}$ for $f \in \average{\Gamma_{{\rm sub},2}}^k_{(\rho, \sigma)}$.
Let $h : \{0,1\}^{k+1} \rightarrow \overline{\mathbf{Q}}$ be a function defined by
\begin{align*}
h(v, x) := \begin{cases} 0 & \text{if $(v, x) \in \{\left(\sigma(0), 0\right), \left(\sigma(1), 1\right)\}$}, \\ +\infty & \text{otherwise}. \end{cases}
\end{align*}
This $h$ is $(1,\textrm{id},\textrm{id})$-network representable by the network $G = (\{s,t\} \cup \{1,2,\dots, k\} \cup \{x\}, A_0 \cup A_1 \cup A'; c)$ defined by $c(e) := + \infty$ for all $e \in A_0 \cup A_1 \cup A'$, where
\begin{align*}
&A_0 := \{(i,t) \mid (\sigma(0))_i = (\sigma(1))_i = 0 \text{ for } i \in [k]\},\\
&A_1 := \{(s,i) \mid (\sigma(0))_i = (\sigma(1))_i = 1 \text{ for } i \in [k]\},\\
&A' := \{(i, x), (x, i) \mid (\sigma(0))_i = 0 \text{ and } (\sigma(1))_i = 1 \text{ for } i \in [k]\}.
\end{align*}
Then $h \in \average{\Gamma_{\rm sub,2}}$.
Since $f \in \average{\Gamma_{{\rm sub},2}}^k_{(\rho, \sigma)}$,
there exists $g \in \average{\Gamma_{\rm sub,2}}^k \subseteq \average{\Gamma_\textrm{sub,2}}$ satisfying $g(\rho(v)) \leq g(v)$ for $v \in \textrm{dom }g$ and
\begin{align*}
f(x_1,x_2,\dots, x_n) = g(\sigma(x_1), \sigma(x_2),\dots,\sigma(x_n)) \qquad (x_1,x_2,\dots, x_n \in \{0,1\}).
\end{align*}
By using $h$, we obtain
\begin{align*}
g(\sigma(x_1),\sigma(x_2), \dots,\sigma(x_n)) = \min_{v_1,v_2, \dots, v_n \in \{0, 1\}^k} \left( g(v_1,v_2,\dots, v_n) + h(v_1, x_1) + \cdots + h(v_n, x_n) \right)
\end{align*}
for all $x_1,x_2,\dots,x_n \in D$.
Therefore it holds that
\begin{align*}
f(x_1,x_2,\dots, x_n) = \min_{v_1,v_2, \dots, v_n \in \{0, 1\}^k} \left( g(v_1,v_2,\dots, v_n) + h(v_1, x_1) + \cdots + h(v_n, x_n) \right).
\end{align*}
By $g,h \in \average{\Gamma_{{\rm sub},2}}$, we obtain $f \in \average{\Gamma_{{\rm sub},2}}$.

\paragraph{(ii) $\sigma(0) \wedge \sigma(1) = \sigma(1)$.}
Let us prove $f \in \average{\Gamma_{{\rm sub},2}}$ for $f \in \average{\Gamma_{{\rm sub},2}}^k_{(\rho, \sigma)}$.
By Lemma~\ref{lem:network rep},
it suffices to show $\overline{f} \in \average{\Gamma_{{\rm sub},2}}$.
By Lemma~\ref{lem:f and overline f},
we obtain $\overline{f} \in \average{\Gamma_{{\rm sub},2}}^k_{(\rho, \overline{\sigma})}$.
Moreover, since $\sigma(0) \wedge \sigma(1) = \sigma(1)$,
we have $\overline{\sigma}(0) \wedge \overline{\sigma}(1) = \overline{\sigma}(0)$.
Thus we obtain $\overline{f} \in \average{\Gamma_{{\rm sub},2}}$ by the case of (i).

\paragraph{(iii) $\sigma(0) \neq \left(\sigma(0) \wedge \sigma(1)\right) \neq \sigma(1)$.}
In this case, there are four cases as follows:
\begin{description}
\item[(iii-1)] $\rho\left(\sigma(0) \land \sigma(1)\right) = \sigma(0)$ and $\rho\left(\sigma(0) \lor \sigma(1)\right) = \sigma(1)$,
\item[(iii-2)] $\rho\left(\sigma(0) \land \sigma(1)\right) = \sigma(1)$ and $\rho\left(\sigma(0) \lor \sigma(1)\right) = \sigma(0)$,
\item[(iii-3)] $\rho\left(\sigma(0) \land \sigma(1)\right) = \rho\left(\sigma(0) \lor \sigma(1)\right) = \sigma(0)$,
\item[(iii-4)] $\rho\left(\sigma(0) \land \sigma(1)\right) = \rho\left(\sigma(0) \lor \sigma(1)\right) = \sigma(1)$.
\end{description}

\paragraph{(iii-1) $\rho\left(\sigma(0) \land \sigma(1)\right) = \sigma(0)$ and $\rho\left(\sigma(0) \lor \sigma(1)\right) = \sigma(1)$.}
Let us prove $f \in \average{\Gamma_{{\rm sub},2}}$ for $f \in \average{\Gamma_{{\rm sub},2}}^k_{(\rho, \sigma)}$.
Since $f \in \average{\Gamma_{{\rm sub},2}}^k_{(\rho, \sigma)}$,
there exists $g \in \average{\Gamma_{\rm sub,2}}^k \subseteq \average{\Gamma_\textrm{sub,2}}$ satisfying $g(\rho(v)) \leq g(v)$ for $v \in \textrm{dom }g$ and
\begin{align}\label{eq:f=g}
f(x_1,x_2,\dots, x_n) = g(\sigma(x_1), \sigma(x_2),\dots,\sigma(x_n)) \qquad (x_1,x_2,\dots, x_n \in \{0,1\}).
\end{align}
Let $\sigma' : \{0,1\} \rightarrow \{0,1\}^k$ be a function defined by
\begin{align*}
\sigma'(x) := \begin{cases} \sigma(0) \vee \sigma(1) & \text{if $x = 1$}, \\ \sigma(0) \wedge \sigma(1) & \text{if $x = 0$}. \end{cases}
\end{align*}
Here the following claim holds.
\begin{cl}\label{cl:sigma}
$f(x_1,x_2,\dots, x_n) = g(\sigma'(x_1),\sigma'(x_2), \dots, \sigma'(x_n)).$
\end{cl}
\begin{proof}[Proof of Claim~\ref{cl:sigma}]
Take arbitrary $x_1,x_2,\dots, x_n \in \{0,1\}$.
Then we obtain
\begin{align}
&f(x_1,x_2, \dots, x_n) + f(1,1, \dots, 1)\label{cl:sigma0}\\
&= g(\sigma(x_1),\sigma(x_2), \dots, \sigma(x_n)) + g(\sigma(1),\sigma(1), \dots, \sigma(1))\label{cl:sigma1}\\
&\geq g(\sigma(x_1) \wedge \sigma(1), \dots, \sigma(x_n) \wedge \sigma(1)) + g(\sigma(x_1) \vee \sigma(1), \dots, \sigma(x_n) \vee \sigma(1))\label{cl:sigma2}\\
&\geq g(\sigma(x_1),\sigma(x_2), \dots, \sigma(x_n)) + g(\sigma(1),\sigma(1), \dots, \sigma(1))\label{cl:sigma3}\\
&= f(x_1,x_2, \dots, x_n) + f(1,1, \dots, 1).\label{cl:sigma4}
\end{align}
Indeed, (\ref{cl:sigma0}) = (\ref{cl:sigma1}) is obvious by (\ref{eq:f=g}), and (\ref{cl:sigma1}) $\geq$ (\ref{cl:sigma2}) follows from the submodularity of $g$.
By the assumption of $\rho$, it holds that
$\rho(\sigma(x) \wedge \sigma(1)) = \sigma(x)$ and $\rho(\sigma(x) \vee \sigma(1)) = \sigma(1)$.
Hence we have (\ref{cl:sigma2}) $\geq$ (\ref{cl:sigma3}).
(\ref{cl:sigma3}) = (\ref{cl:sigma4}) is also obvious by (\ref{eq:f=g}).
This means that all inequalities are equalities.
Then it holds that
\begin{align}\label{eq:f(x)}
f(x_1,x_2, \dots, x_n) = g(\sigma(x_1) \wedge \sigma(1),\sigma(x_2) \wedge \sigma(1), \dots, \sigma(x_n) \wedge \sigma(1)).
\end{align}
Also we obtain
\begin{align}
&f(0,0, \dots, 0) + f(\overline{x_1},\overline{x_2}, \dots, \overline{x_n})\label{cl:sigma'0}\\
&= g(\sigma(0),\sigma(0), \dots, \sigma(0)) + g(\sigma(\overline{x_1}),\sigma(\overline{x_2}), \dots, \sigma(\overline{x_n}))\label{cl:sigma'1}\\
&\geq g(\sigma(\overline{x_1}) \wedge \sigma(0), \dots, \sigma(\overline{x_n}) \wedge \sigma(0)) + g(\sigma(\overline{x_1}) \vee \sigma(0), \dots, \sigma(\overline{x_n}) \vee \sigma(0))\label{cl:sigma'2}\\
&\geq g(\sigma(0),\sigma(0), \dots, \sigma(0)) + g(\sigma(\overline{x_1}),\sigma(\overline{x_2}), \dots, \sigma(\overline{x_n}))\label{cl:sigma'3}\\
&= f(0,0, \dots, 0) + f(\overline{x_1},\overline{x_2}, \dots, \overline{x_n}).\label{cl:sigma'4}
\end{align}
Indeed, (\ref{cl:sigma'0}) = (\ref{cl:sigma'1}) is obvious by (\ref{eq:f=g}), and (\ref{cl:sigma'1}) $\geq$ (\ref{cl:sigma'2}) follows from the submodularity of $g$.
By the assumption of $\rho$, it holds that
$\rho(\sigma(x) \wedge \sigma(0)) = \sigma(0)$ and $\rho(\sigma(x) \vee \sigma(0)) = \sigma(x)$.
Hence we have (\ref{cl:sigma'2}) $\geq$ (\ref{cl:sigma'3}).
(\ref{cl:sigma'3}) = (\ref{cl:sigma'4}) is also obvious by (\ref{eq:f=g}).
This means that all inequalities are equalities.
Then it holds that
\begin{align}\label{eq:f(0)}
f(0,0, \dots, 0) = g(\sigma(\overline{x_1}) \wedge \sigma(0),\sigma(\overline{x_2}) \wedge \sigma(0), \dots, \sigma(\overline{x_n}) \wedge \sigma(0)).
\end{align}
Hence we have
\begin{align}
&f(0,0, \dots, 0) + f(x_1,x_2, \dots, x_n)\label{cl:sigma''0}\\
&= g(\sigma(\overline{x_1}) \wedge \sigma(0), \dots, \sigma(\overline{x_n}) \wedge \sigma(0)) + g(\sigma(x_1) \wedge \sigma(1), \dots, \sigma(x_n) \wedge \sigma(1))\label{cl:sigma''1}\\
&\geq g(\sigma(0) \wedge \sigma(1), \dots, \sigma(0) \wedge \sigma(1)) + g(\sigma'(x_1), \dots, \sigma'(x_n))\label{cl:sigma''2}\\
&\geq g(\sigma(0),\sigma(0), \dots, \sigma(0)) + g(\sigma(x_1),\sigma(x_2), \dots, \sigma(x_n))\label{cl:sigma''3}\\
&= f(0,0, \dots, 0) + f(x_1,x_2, \dots, x_n).\label{cl:sigma''4}
\end{align}
By (\ref{eq:f(x)}) and (\ref{eq:f(0)}), it holds that (\ref{cl:sigma''0}) = (\ref{cl:sigma''1}).
(\ref{cl:sigma''1}) $\geq$ (\ref{cl:sigma''2}) follows from the submodularity of $g$.
(\ref{cl:sigma''2}) $\geq$ (\ref{cl:sigma''3}) follows from the assumption of $\rho$.
(\ref{cl:sigma''3}) = (\ref{cl:sigma''4}) is obvious by (\ref{eq:f=g}).
This means that all inequalities are equalities.
Hence we obtain
$f(x_1,x_2, \dots, x_n) = g(\sigma'(x_1),\sigma'(x_2), \dots, \sigma'(x_n))$.
\end{proof}

We define $\rho' : \{0,1\}^k \rightarrow \{0,1\}^k$ by
\begin{align*}
\rho'(x) := \begin{cases} \sigma'(1) & \text{if $\rho(x) = \sigma(1)$},\\ \sigma'(0) & \text{if $\rho(x) = \sigma(0)$}. \end{cases}
\end{align*}
By Claim~\ref{cl:sigma}, a function $f \in \average{\Gamma_{{\rm sub},2}}^k_{(\rho, \sigma)}$ is also representable by $(\Gamma_{{\rm sub},2},\rho', \sigma')$.
Hence $f \in \average{\Gamma_{{\rm sub},2}}^k_{(\rho', \sigma')}$.
Here it holds that $\sigma'(0) \wedge \sigma'(1) = \sigma'(1)$.
This means that $\average{\Gamma_{{\rm sub},2}}^k_{(\rho', \sigma')}$ is in the case (i).
Therefore we obtain $f \in \average{\Gamma_{{\rm sub},2}}$.

\paragraph{(iii-2) $\rho\left(\sigma(0) \land \sigma(1)\right) = \sigma(1)$ and $\rho\left(\sigma(0) \lor \sigma(1)\right) = \sigma(0)$.}
We also prove $f \in \average{\Gamma_{{\rm sub},2}}$ for $f \in \average{\Gamma_{{\rm sub},2}}^k_{(\rho, \sigma)}$.
By Lemma~\ref{lem:network rep},
it suffices to show $\overline{f} \in \average{\Gamma_{{\rm sub},2}}$.
By Lemma~\ref{lem:f and overline f},
we obtain $\overline{f} \in \average{\Gamma_{{\rm sub},2}}^k_{(\rho, \overline{\sigma})}$.
Moreover, since $\rho\left(\sigma(0) \land \sigma(1)\right) = \sigma(1)$ and $\rho\left(\sigma(0) \lor \sigma(1)\right) = \sigma(0)$,
it holds that $\rho\left(\overline{\sigma}(0) \land \overline{\sigma}(1)\right) = \overline{\sigma}(0)$ and $\rho\left(\overline{\sigma}(0) \lor \overline{\sigma}(1)\right) = \overline{\sigma}(1)$.
Thus we obtain $\overline{f} \in \average{\Gamma_{{\rm sub},2}}$ by the case of (iii-1).

\paragraph{(iii-3) $\rho\left(\sigma(0) \land \sigma(1)\right) = \rho\left(\sigma(0) \lor \sigma(1)\right) = \sigma(0)$.}
We prove that for all $f \in \average{\Gamma_{{\rm sub},2}}^k_{(\rho, \sigma)}$, $f$ is a monotone non-decreasing function.
For every $i \in [n]$ and $x_1,\dots, x_{i-1}, x_{i+1}, \dots, x_n \in \{0,1\}$, it holds that
\begin{align}
&f(x_1,\dots, x_{i-1}, 1, x_{i+1}, \dots, x_n) + f(x_1,\dots, x_{i-1}, 0, x_{i+1}, \dots, x_n)\label{iii-3:1}\\
&= g(\sigma_1,\dots,\sigma_{i-1}, \sigma(1), \sigma_{i+1}, \dots, \sigma_n) + g(\sigma_1,\dots,\sigma_{i-1}, \sigma(0), \sigma_{i+1}, \dots, \sigma_n)\label{iii-3:2}\\
&\geq g(\sigma_1,\dots,\sigma_{i-1}, \sigma(1) \wedge \sigma(0), \sigma_{i+1}, \dots, \sigma_n) + g(\sigma_1,\dots,\sigma_{i-1}, \sigma(1) \vee \sigma(0), \sigma_{i+1}, \dots, \sigma_n)\label{iii-3:3}\\
&\geq 2 g(\sigma_1,\dots,\sigma_{i-1}, \sigma(0), \sigma_{i+1}, \dots, \sigma_n)\label{iii-3:4}\\
&= 2 f(x_1,\dots,x_{i-1}, 0, x_{i+1}, \dots, x_n).\label{iii-3:5}
\end{align}
Here $\sigma_j := \sigma(x_j)$ for $j \in [n] \setminus i$.
Indeed, (\ref{iii-3:2}) $\geq$ (\ref{iii-3:3}) follows from the submodularity of $g$,
and (\ref{iii-3:3}) $\geq$ (\ref{iii-3:4}) follows from the assumption of $\rho$.
Therefore for $i \in [n]$ and $x_1,\dots, x_{i-1}, x_{i+1}, \dots, x_n \in \{0,1\}$, we have
\begin{align*}
f(x_1,\dots,x_{i-1}, 1, x_{i+1}, \dots, x_n) \geq f(x_1,\dots,x_{i-1}, 0, x_{i+1}, \dots, x_n).
\end{align*}
This means $f$ is a monotone non-decreasing function.

\paragraph{(iii-4) $\rho\left(\sigma(0) \land \sigma(1)\right) = \rho\left(\sigma(0) \lor \sigma(1)\right) = \sigma(1)$.}
We prove that for all $f \in \average{\Gamma_{{\rm sub},2}}^k_{(\rho, \sigma)}$, $f$ is a monotone non-increasing function.
By Lemma~\ref{lem:f and overline f},
we obtain $\overline{f} \in \average{\Gamma_{{\rm sub},2}}^k_{(\rho, \overline{\sigma})}$.
Moreover, since $\rho\left(\sigma(0) \land \sigma(1)\right) = \rho\left(\sigma(0) \lor \sigma(1)\right) = \sigma(1)$,
it holds that $\rho\left(\overline{\sigma}(0) \land \overline{\sigma}(1)\right) = \rho\left(\overline{\sigma}(0) \lor \overline{\sigma}(1)\right) = \overline{\sigma}(0)$.
Thus $\overline{f}$ is a monotone non-decreasing function by the case of (iii-3).
Hence $f$ is a monotone non-increasing function.
\end{proof}

\begin{prop}\label{prop:second subsetneq}
$\overline{\average{\Gamma_{\rm sub,2}}}_{\{0,1\}} \subsetneq \average{\Gamma_{\rm sub,2}} \cup \Gamma_{\rm mono}$.
\end{prop}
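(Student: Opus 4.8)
The inclusion $\overline{\average{\Gamma_{\rm sub,2}}}_{\{0,1\}} \subseteq \average{\Gamma_{\rm sub,2}} \cup \Gamma_{\rm mono}$ is exactly Proposition~\ref{prop:mono}, so the entire content here is the strictness. The plan is to exhibit one monotone function $f$ that lies in $\Gamma_{\rm mono}$ but not in $\overline{\average{\Gamma_{\rm sub,2}}}_{\{0,1\}}$. Since every element of $\average{\Gamma_{\rm sub,2}}$ is trivially $(1,{\rm id},{\rm id})$-representable, such a witness must be a monotone function that is \emph{not} submodular. I would fix $f$ to be monotone non-decreasing and non-submodular, so in particular non-constant; a non-increasing witness would then follow automatically from the duality in Lemmas~\ref{lem:f and overline f} and~\ref{lem:network rep}. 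The key reduction is that, by the case analysis inside the proof of Proposition~\ref{prop:mono}, any representation $f \in \average{\Gamma_{\rm sub,2}}^k_{(\rho,\sigma)}$ falls into one of the cases (i), (ii), (iii-1)--(iii-4): cases (i), (ii), (iii-1), (iii-2) force $f \in \average{\Gamma_{\rm sub,2}}$, and case (iii-4) forces $f$ monotone non-increasing, both of which our choice of $f$ excludes. Hence it suffices to rule out a representation in case (iii-3), i.e. with $\sigma(0),\sigma(1)$ incomparable and $\rho(\sigma(0)\wedge\sigma(1)) = \rho(\sigma(0)\vee\sigma(1)) = \sigma(0)$, and this must be done uniformly over all $k$ and all admissible $\rho$.

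In case (iii-3), representability of $f$ means there is a network-representable (in particular submodular) $g \in \average{\Gamma_{\rm sub,2}}^k$ with $g(\rho(v)) \le g(v)$ and $f(x) = g(\sigma(x_1),\dots,\sigma(x_n))$. The subtlety I expect to exploit is that the valid configurations $\{(\sigma(x_1),\dots,\sigma(x_n)) \mid x \in \{0,1\}^n\}$ do \emph{not} form a sublattice of $\{0,1\}^{kn}$, so that applying submodularity of $g$ to a pair $\sigma(x),\sigma(y)$ and then the retraction bound $g\circ\rho \le g$ only recovers $f(x)+f(y)\ge 2f(x\wedge y)$ --- an inequality that every monotone non-decreasing function already satisfies. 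Thus pairwise submodularity of $g$ is too weak to separate, and the genuine obstruction must come either from combining submodularity over several points (because the configurations fail to be a sublattice) or, at the finer level, from the fact that $g$ is \emph{network} representable rather than merely submodular, which is where a \v{Z}ivn\'y--Cohen--Jeavons-type phenomenon enters. To certify $f \notin \average{\Gamma_{\rm sub,2}}^k_{(\rho,\sigma)}$ rigorously I would use Lemma~\ref{lem:exp chara}: this set is a weighted relational clone on $\{0,1\}$ by Theorem~\ref{thm:ExtendExpressive}, so it suffices to produce a weighted polymorphism $\omega$ of it together with tuples $x^1,\dots,x^m \in {\rm dom}\,f$ for which $\sum_\varphi \omega(\varphi)\, f(\varphi(x^1,\dots,x^m)) > 0$. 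To make this uniform in $k$, I would collapse every case-(iii-3) parameter to a single canonical one (the natural target being $(2,\rho^*,\sigma^*_1)$) by the same substitution device used to pass from (iii-1) to (i), replacing $\sigma,\rho$ by suitable $\sigma',\rho'$; this parallels the later refined statement and reduces the whole problem to showing $f \notin \average{\Gamma_{\rm sub,2}}^2_{(\rho^*,\sigma^*_1)}$ for one explicit $f$.

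The main obstacle is the construction of the explicit witness $f$ together with its certificate. The natural candidate is a monotone non-decreasing function of small arity (arity $3$ or $4$) whose would-be inner function $g$ is, through the embedding $\sigma^*_1$, a non-network-representable submodular function in the spirit of \v{Z}ivn\'y--Cohen--Jeavons; verifying that \emph{no} network-representable $g$ realizes it then reduces to exhibiting the weighted polymorphism $\omega$ of Lemma~\ref{lem:exp chara}, or equivalently to checking infeasibility of the finite linear program in the $2^{4n}$ unknowns subject to the submodularity-plus-retraction inequalities, which can be settled by computer as elsewhere in the paper. Pinning down the smallest such $f$ and the exact $\omega$ is the technical heart; once they are in hand, the collapse of case (iii-3) to the canonical parameter upgrades the single failure at $(2,\rho^*,\sigma^*_1)$ to failure at every $(k,\rho,\sigma)$, yielding $f \in \Gamma_{\rm mono}\setminus \overline{\average{\Gamma_{\rm sub,2}}}_{\{0,1\}}$ and hence the strict inclusion.
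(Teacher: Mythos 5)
Your reduction is the same as the paper's: the inclusion is Proposition~\ref{prop:mono}, and the case analysis there (together with the collapse of case (iii-3) to the canonical parameter $(2,\rho^*,\sigma^*_1)$, carried out in the proof of Theorem~\ref{thm:{0,1}eqExpressive}) correctly reduces everything to exhibiting one monotone non-decreasing, non-submodular $f$ with $f \notin \average{\Gamma_{{\rm sub},2}}^2_{(\rho^*,\sigma^*_1)}$. But at that point you stop: you name the construction of the witness and its certificate as ``the technical heart'' and leave both unresolved, proposing only that one should search for a weighted polymorphism via Lemma~\ref{lem:exp chara} or check infeasibility of an LP by computer. That is the entire content of the strictness claim, so as written the proposal has a genuine gap rather than a proof.

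You also overestimate what is needed to close it. No \v{Z}ivn\'y--Cohen--Jeavons-type argument and no weighted polymorphism is required; your first instinct (combine submodularity over several configurations, exploiting that the image of $\sigma^*_1$ is not a sublattice of $\{0,1\}^{2n}$) is the right one and can be executed elementarily. The paper takes $f:\{0,1\}^3\to\mathbf{Q}$ with $f(1,1,1)=1$ and $f=0$ elsewhere. If $f=g\circ\sigma^*_1$ with $g$ submodular and $g\circ\rho^*\le g$, then $\min g=\min f=0$, so $g\ge 0$; for a nonnegative submodular $g$, the inequality $0=g(x)+g(y)\ge g(x\wedge y)+g(x\vee y)\ge 0$ shows the zero set of $g$ is closed under $\wedge$ and $\vee$. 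The seven configurations $(\sigma^*_1(x_1),\sigma^*_1(x_2),\sigma^*_1(x_3))$ with $x\neq(1,1,1)$ all lie in this zero set, and a short computation shows their $\{\wedge,\vee\}$-closure contains $(1,0,1,0,1,0)=(\sigma^*_1(1),\sigma^*_1(1),\sigma^*_1(1))$, forcing $f(1,1,1)=0$, a contradiction. So the missing piece is not an intractable search but a concrete three-variable witness plus a four-line lattice computation; without supplying it (or an equivalent certificate), the proposal does not establish the proposition.
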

The proof of Proposition~\ref{prop:second subsetneq} is given in Section~\ref{subsec:prop: second subsetneq}.

By Lemma~\ref{lem:first subsetneq}, Proposition~\ref{prop:mono}, and Proposition~\ref{prop:second subsetneq}, we obtain Theorem~\ref{thm:{0,1}main}.

\subsection{Proof of Theorem~\ref{thm:{0,1}eqExpressive}}\label{subsec:thm:{0,1}eqExpressive}
By the proof of Proposition~\ref{prop:mono}, it holds that $ \average{\Gamma_{{\rm sub},2}}^k_{(\rho, \sigma)} \subseteq \average{\Gamma_{{\rm sub},2}}$ in the cases of (i), (ii), (iii-1), and (iii-2) in Section~\ref{subsec:thm:{0,1}mainExpressive}.
Hence we consider only the two cases as follows:
\begin{description}
\item[(iii-3)] $\sigma(0) \neq \left(\sigma(0) \wedge \sigma(1)\right) \neq \sigma(1)$ and $\rho\left(\sigma(0) \land \sigma(1)\right) = \rho\left(\sigma(0) \lor \sigma(1)\right) = \sigma(0)$,
\item[(iii-4)] $\sigma(0) \neq \left(\sigma(0) \wedge \sigma(1)\right) \neq \sigma(1)$ and $\rho\left(\sigma(0) \land \sigma(1)\right) = \rho\left(\sigma(0) \lor \sigma(1)\right) = \sigma(1)$.
\end{description}

\paragraph{The case (iii-3).}
We prove $\average{\Gamma_{{\rm sub},2}}^k_{(\rho, \sigma)} \subseteq \average{\Gamma_{{\rm sub},2}}^2_{(\rho^*, \sigma^*_1)}$.
Let $S_0, S_1, A, B$ be index sets defined by
\begin{align*}
&S_0 := \{ i \mid (\sigma(0))_i = (\sigma(1))_i = 0 \text{ for } i \in [k] \},\\
&S_1 := \{ i \mid (\sigma(0))_i = (\sigma(1))_i = 1 \text{ for } i \in [k] \},\\
&A := \{ j \mid (\sigma(0))_j = 0 \text{ and } (\sigma(1))_j = 1 \text{ for } i \in [k] \},\\
&B := \{ k \mid (\sigma(0))_j = 1 \text{ and } (\sigma(1))_j = 0 \text{ for } i \in [k] \}.
\end{align*}
Let $j_0$ be the minimum index in $A$, and $k_0$ the minimum index in $B$.
Let $S := (S_0 \cup S_1 \cup A \cup B) \setminus \{j_0, k_0\}$.
Furthermore we define functions $h_0 : \{0, 1\} \rightarrow \overline{\mathbf{Q}},\ h_1 : \{0, 1\} \rightarrow \overline{\mathbf{Q}},\ h_2 : \{0, 1\}^2 \rightarrow \overline{\mathbf{Q}}$ by
\begin{align*}
h_0(x) := \begin{cases} 0 & \text{if $x = 0$}, \\ +\infty & \text{if $x = 1$}, \end{cases}\quad
h_1(x) := \begin{cases} 0 & \text{if $x = 1$}, \\ +\infty & \text{if $x = 0$}, \end{cases}\quad
h_2(x, y) := \begin{cases} 0 & \text{if $x = y$}, \\ +\infty & \text{otherwise}. \end{cases}
\end{align*}
$h_0$, $h_1$, and $h_2$ are $(1,\textrm{id},\textrm{id})$-network representable.
Indeed, $h_0$ is represented by the network $\left(\{s,t,1\}, \{(1,t)\}\right)$, where the edge capacity of $(1,t)$ is equal to $+\infty$,
$h_1$ is represented by the network $\left(\{s,t,1\}, \{(s,1)\}\right)$, where the edge capacity of $(s,1)$ is equal to $+\infty$,
and $h_2$ is represented by the network $\left(\{s,t,1,2\}, \{(1,2), (2,1)\}\right)$, where the edge capacities of $(1,2)$ and $(2,1)$ are equal to $+\infty$.
Take arbitrary $f \in \average{\Gamma_{{\rm sub},2}}^k_{(\rho, \sigma)}$.
Then there exists $g \in \average{\Gamma_{{\rm sub},2}}^k \subseteq \average{\Gamma_\textrm{sub,2}}$ satisfying $g(\rho(v)) \leq g(v)$ for $v \in \textrm{dom }g$ and
\begin{align*}
f(x_1,x_2,\dots, x_n) = g(\sigma(x_1), \sigma(x_2),\dots,\sigma(x_n)) \qquad (x_1,x_2,\dots, x_n \in \{0,1\}).
\end{align*}
Let $g' : \{0, 1\}^{kn} \rightarrow \overline{\mathbf{Q}}$ be a function defined by
\begin{align*}
g'(v_1,v_2, \dots, v_n) :=\ &g(v_1,v_2,\dots,v_n) + \sum_{i}\sum_{i_0 \in S_0}h_0((v_i)_{i_0}) + \sum_{i}\sum_{i_1 \in S_1}h_1((v_i)_{i_1})\\
&+ \sum_{i}\sum_{j \in A}h_2((v_i)_{j}, (v_i)_{j_0}) + \sum_{i}\sum_{k \in B}h_2((v_i)_{k}, (v_i)_{k_0})
\end{align*}
for $v_1,v_2,\dots,v_n \in \{0,1\}^k$.
By the definition of $g'$, we have
\begin{align*}
g'(v) =
\begin{cases}
g(v) & \text{if $v_i \in \{\sigma(0), \sigma(1), \sigma(0) \wedge \sigma(1), \sigma(0) \vee \sigma(1)\}$ for each $i \in [n]$},\\
+\infty & \text{otherwise},
\end{cases}
\end{align*}
for all $v = (v_1,v_2,\dots, v_n) \in \{0,1\}^{kn}$.
We notice that $g'$ only depends on $2n$ elements $\{(v_i)_{j_0},(v_i)_{k_0}\}_{i \in [n]}$.
Hence let $g'' : \{0, 1\}^{2n} \rightarrow \overline{\mathbf{Q}}$ be a function defined by
\begin{align*}
g''(u_1,u_2, \dots, u_n) := &\min_{\substack{(v_1)_{j_0} = (u_1)_1, \dots, (v_n)_{j_0} = (u_n)_1\\ (v_1)_{k_0} = (u_1)_2, \dots, (v_n)_{k_0} = (u_n)_2}} g'(v_1,v_2,\dots, v_n)
\end{align*}
for $u_1,u_2,\dots, u_n \in \{0, 1\}^2$.
Since $g, h_0, h_1, h_2 \in \average{\Gamma_{{\rm sub},2}}$, we have $g', g'' \in \average{\Gamma_{{\rm sub},2}}$.
Furthermore we have $g''(u_1,u_2,\dots, u_n) \geq g''(\rho^*(u_1),\rho^*(u_2), \dots, \rho^*(u_n))$ for $u_1,u_2, \dots, u_n \in \{0,1\}^2$.
Indeed, it holds that
\begin{align*}
g''(u_1,u_2,\dots, u_n) = g(v_1,v_2, \dots, v_n) \qquad (u_1,u_2, \dots, u_n \in \{0,1\}^2)
\end{align*}
by the definition of $g''$,
where
\begin{align*}
v_i := \begin{cases}
\sigma(0) & \text{if $u_i = (0,1)$},\\
\sigma(1) & \text{if $u_i = (1,0)$},\\
\sigma(0) \wedge \sigma(1) & \text{if $u_i = (0,0)$},\\
\sigma(0) \vee \sigma(1) & \text{if $u_i = (1,1)$},
\end{cases} \qquad (i \in [n]).
\end{align*}
Hence it holds that
\begin{align*}
	g''(u_1,u_2,\dots, u_n) &= g(v_1,v_2, \dots, v_n)\\
	&\geq g(\rho(v_1),\rho(v_2), \dots, \rho(v_n))\\
	&= g''(\rho^*(u_1),\rho^*(u_2), \dots, \rho^*(u_n))
\end{align*}
by the assumption of $\rho$.
By using $g''$, $f$ is represented by
\begin{align*}
f(x_1,x_2,\dots, x_n) = g''(\sigma^*_1(x_1),\sigma^*_1(x_2), \dots, \sigma^*_1(x_n)) \qquad (x_1,x_2,\dots,x_n \in D).
\end{align*}
This means that $f \in \average{\Gamma_{{\rm sub},2}}^2_{(\rho^*, \sigma^*_1)}$.

\paragraph{The case (iii-4).}
We prove $\average{\Gamma_{{\rm sub},2}}^k_{(\rho, \sigma)} \subseteq \average{\Gamma_{{\rm sub},2}}^2_{(\rho^*, \sigma^*_2)}$.
Take any $f \in \average{\Gamma_{{\rm sub},2}}^k_{(\rho, \sigma)}$.
By Lemma~\ref{lem:f and overline f},
we obtain $\overline{f} \in \average{\Gamma_{{\rm sub},2}}^k_{(\rho, \overline{\sigma})}$.
Moreover, since $\sigma(0) \neq \left(\sigma(0) \wedge \sigma(1)\right) \neq \sigma(1)$ and $\rho\left(\sigma(0) \land \sigma(1)\right) = \rho\left(\sigma(0) \lor \sigma(1)\right) = \sigma(1)$,
it holds that $\overline{\sigma}(0) \neq \left(\overline{\sigma}(0) \wedge \overline{\sigma}(1)\right) \neq \overline{\sigma}(1)$ and $\rho\left(\overline{\sigma}(0) \land \overline{\sigma}(1)\right) = \rho\left(\overline{\sigma}(0) \lor \overline{\sigma}(1)\right) = \overline{\sigma}(0)$.
Thus $\overline{f} \in \average{\Gamma_{{\rm sub},2}}^2_{(\rho^*, \sigma^*_1)}$ holds by the case of (iii-3).
Hence we obtain $f \in \average{\Gamma_{{\rm sub},2}}^2_{(\rho^*, \overline{\sigma^*_1})} = \average{\Gamma_{{\rm sub},2}}^2_{(\rho^*, \sigma^*_2)}$.

Then it holds that $\overline{\average{\Gamma_{{\rm sub},2}}}_{\{0,1\}} = \average{\Gamma_{{\rm sub},2}} \cup \average{\Gamma_{{\rm sub},2}}^2_{(\rho^*, \sigma^*_1)} \cup \average{\Gamma_{{\rm sub},2}}^2_{(\rho^*, \sigma^*_2)}$.

\subsection{Proof of Proposition~\ref{prop:second subsetneq}}\label{subsec:prop: second subsetneq}
Let $f : \{0,1\}^3 \rightarrow \mathbf{Q}$ be a monotone non-decreasing function defined by
$f(1,1,1) := 1$ and $f(x) := 0$ for other $x$.
The function $f$ is not submodular.
Therefore $f \not\in \average{\Gamma_{\rm sub,2}}$.
Hence it suffices to prove $f \not\in \average{\Gamma_{{\rm sub},2}}^2_{(\rho^*, \sigma^*_1)}$ by the proof of Theorem~\ref{thm:{0,1}eqExpressive} (the case (iii-3)).
Suppose to the contrary that $f \in \average{\Gamma_{{\rm sub},2}}^2_{(\rho^*, \sigma^*_1)}$.
Then there exists $g \in \average{\Gamma_{{\rm sub},2}}$ satisfying $g(\sigma^*_1(1), \sigma^*_1(1), \sigma^*_1(1)) = g(1,0,1,0,1,0) = 1$,
$g(x) = 0$ for $x \in A$,
and $\min_x g(x) = 0$,
where a set $A \subseteq \{0,1\}^6$ is defined by
\begin{align*}
A := \{&\left(\sigma^*_1(0), \sigma^*_1(0), \sigma^*_1(0)\right), \left(\sigma^*_1(0), \sigma^*_1(0), \sigma^*_1(1)\right), \left(\sigma^*_1(0), \sigma^*_1(1), \sigma^*_1(0)\right), \left(\sigma^*_1(0), \sigma^*_1(1), \sigma^*_1(1)\right),\\
&\left(\sigma^*_1(1), \sigma^*_1(0), \sigma^*_1(0)\right), \left(\sigma^*_1(1), \sigma^*_1(0), \sigma^*_1(1)\right), \left(\sigma^*_1(1), \sigma^*_1(1), \sigma^*_1(0)\right)\}\\
= \{ &(0,1,0,1,0,1), (0,1,0,1,1,0), (0,1,1,0,0,1),(0,1,1,0,1,0),\\
&(1,0,0,1,0,1), (1,0,0,1,1,0), (1,0,1,0,0,1)\}.
\end{align*}
Let $C_{\wedge, \vee}(A)$ denote the minimum subset $X$ of $\{0,1\}^6$ containing $A$ such that $x \wedge y, x \vee y \in X$ for all $x,y \in X$.
By the submodularity of $g$, it holds that $g(x) = 0$ for $x \in C_{\wedge, \vee}(A)$.
Therefore it should hold that $(1,0,1,0,1,0) \not\in C_{\wedge, \vee}(A)$.
However by
\begin{align*}
&(1,0,0,1,1,0) \wedge (1,0,1,0,0,1) = (1,0,0,0,0,0),\\
&(0,1,1,0,1,0) \wedge (1,0,1,0,0,1) = (0,0,1,0,0,0),\\
&(0,1,1,0,1,0) \wedge (1,0,0,1,1,0) = (0,0,0,0,1,0),\\
&(1,0,0,0,0,0) \vee (0,0,1,0,0,0) \vee (0,0,0,0,1,0) = (1,0,1,0,1,0),
\end{align*}
we have $(1,0,1,0,1,0) \in C_{\wedge, \vee}(A)$.
This is a contradiction to the existence of such a function $g$.
Then it holds that $f \not\in \average{\Gamma_{{\rm sub},2}}^2_{(\rho^*, \sigma^*_1)}$.

\subsection{Proof of Theorem~\ref{thm:3ary2subExpressive}}\label{subsec:thm:3ary2subExpressive}
Let $f : \{0,-1,1\}^3 \rightarrow \mathbf{Q}$ be a bisubmodular function defined by
$f(0,0,0) := -1$, $f(0,1,1) = f(1,0,1) = f(1,1,0) := 1$, $f(1,1,1) := 2$, and $f(x) = 0$ for other $x$.
It suffices to prove $f \not\in \average{\Gamma_{\rm sub}}^2_{(\rho_2, \sigma_2)}$.

$\omega_2 : {\rm Pol}^{(4)}(\average{\Gamma_{\rm sub}}^2_{(\rho_2, \sigma_2)}) \rightarrow \mathbf{Q}$ defined as the following is a weighted polymorphism of $\average{\Gamma_{\rm sub}}^2_{(\rho_2, \sigma_2)}$:
\begin{align*}
\omega_2(\varphi) := \begin{cases}
-1 & \text{if $\varphi \in \{e_1^{(4)}, e_2^{(4)}, e_3^{(4)}, e_4^{(4)}\}$},\\
1 & \text{if $\varphi \in \{\varphi_1, \varphi_2, \varphi_3, \varphi_4\}$},\\
0 & \text{otherwise}.
\end{cases}
\end{align*}
Here $\varphi_1, \varphi_2, \varphi_3, \varphi_4 : \{0, -1, 1\}^4 \rightarrow \{0,-1,1\}^4$ is defined by
\begin{align*}
&\varphi_1(a,b,c,d) := \sigma_2(\rho_2(\sigma_2^{-1}(a) \wedge \sigma_2^{-1}(b))),\\
&\varphi_2(a,b,c,d) := \sigma_2(\rho_2((\sigma_2^{-1}(a) \vee \sigma_2^{-1}(b)) \wedge \sigma_2^{-1}(c))),\\
&\varphi_3(a,b,c,d) := \sigma_2(\rho_2((\sigma_2^{-1}(a) \vee \sigma_2^{-1}(b) \vee \sigma_2^{-1}(c)) \wedge \sigma_2^{-1}(d))),\\
&\varphi_4(a,b,c,d) := \sigma_2(\rho_2(\sigma_2^{-1}(a) \vee \sigma_2^{-1}(b) \vee \sigma_2^{-1}(c) \vee \sigma_2^{-1}(d))).
\end{align*}
Indeed, a function $g \in \average{\Gamma_{\rm sub}}^k$ such that $g(v) \geq g(\rho_2(v))$ for $v \in {\rm dom}\ g$ satisfies the following inequalities for all $x, y, z, w \in \{0,1\}^{2n}$;
\begin{align}
&g(x) + g(y) + g(z) + g(w)\notag\\
&\geq g(x \wedge y) + g(x \vee y) + g(z) + g(w)\label{eq:bi1}\\
&\geq g(x \wedge y) + g((x \vee y) \wedge z) + g(x \vee y \vee z) + g(w)\label{eq:bi2}\\
&\geq g(x \wedge y) + g((x \vee y) \wedge z) + g((x \vee y \vee z) \wedge w) + g(x \vee y \vee z \vee w)\label{eq:bi3}\\
&\geq g(x') + g(y') + g(z') + g(w')\label{eq:bi4}
\end{align}
Here let $x' := \rho_2(x \wedge y),\ y' := \rho_2((x \vee y) \wedge z),\ z' := \rho_2((x \vee y \vee z) \wedge w),\ w' := \rho_2(x \vee y \vee z \vee w)$.
(\ref{eq:bi1})--(\ref{eq:bi3}) follow from the submodularity, and (\ref{eq:bi4}) follows from the definition of $g$.
This means that $h(a) + h(b) + h(c) + h(d) \geq h(\varphi_1(a,b,c,d)) + h(\varphi_2(a,b,c,d)) + h(\varphi_3(a,b,c,d)) + h(\varphi_4(a,b,c,d))$ holds for any $h \in \average{\Gamma_{\rm sub}}^2_{(\rho_2, \sigma_2)}$ and $a,b,c,d \in \textrm{dom }h$.

Let $x := (0,0,1,0,0,1),\ y := (0,0,0,1,0,1),\ z := (0,0,0,1,1,0),\ w := (0,1,1,0,1,0)$, that is,
$\sigma_2^{-1}(x) = (0,1,-1),\ \sigma_2^{-1}(y) = (0,-1,-1),\ \sigma_2^{-1}(z) = (0,-1,1),\ \sigma_2^{-1}(w) = (-1,1,1)$.
In this case, it holds that $\rho_2(x \wedge y) = (0,0,0,0,0,1),\ \rho_2((x \vee y) \wedge z) = (0,0,0,1,0,0),\ \rho_2((x \vee y \vee z) \wedge w) = (0,0,1,0,1,0),\ \rho_2(x \vee y \vee z \vee w) = (0,1,0,0,0,0)$,
that is, $\sigma_2^{-1}(\rho_2(x \wedge y)) = (0,0,-1),\ \sigma_2^{-1}(\rho_2((x \vee y) \wedge z)) = (0,-1,0),\ \sigma_2^{-1}(\rho_2((x \vee y \vee z) \wedge w)) = (0,1,1),\ \sigma_2^{-1}(\rho_2(x \vee y \vee z \vee w)) = (-1,0,0)$.
Hence we have
\begin{align*}
0 =\ &f(0,1,-1)+f(0,-1,-1)+f(0,-1,1)+f(-1,1,1)\\
<\ &f(0,0,-1)+f(0,-1,0)+f(0,1,1)+f(-1,0,0) = 1.
\end{align*}
This means that $f \not\in \average{\Gamma_{\rm sub}}^2_{(\rho_2, \sigma_2)}$ by Lemma~\ref{lem:exp chara}.

\subsection{Proof of Theorem~\ref{thm:2aryksubExpressive}}\label{subsec:thm:2aryksubExpressive}
Let $f : [0,k]^2 \rightarrow \mathbf{Q}$ be a $k$-submodular function defined by
$f(0,0) := -1$, $f(1,1) := 1$, and $f(x) := 0$ for other $x$.
It suffices to prove $f \not\in \average{\Gamma_{\rm sub}}^k_{(\rho_k, \sigma_k)}$.

$\omega_k : {\rm Pol}^{(8)}(\average{\Gamma_{\rm sub}}^k_{(\rho_k, \sigma_k)}) \rightarrow \mathbf{Q}$ defined as the following is a weighted polymorphism of $\average{\Gamma_{\rm sub}}^k_{(\rho_k, \sigma_k)}$:
\begin{align*}
\omega_k(\varphi) := \begin{cases}
-5 & \text{if $\varphi \in \{e_1^{(8)}, e_2^{(8)}\}$},\\
-3 & \text{if $\varphi \in \{e_3^{(8)}, e_4^{(8)}, e_6^{(8)}, e_8^{(8)} \}$},\\
-2 & \text{if $\varphi \in \{e_5^{(8)}, e_7^{(8)}\}$},\\
3 & \text{if $\varphi \in \{\varphi_1, \varphi_4\}$},\\
2 & \text{if $\varphi \in \{\varphi_2, \varphi_3, \varphi_7, \varphi_8, \varphi_9, \varphi_{16} \}$},\\
1 & \text{if $\varphi \in \{\varphi_5, \varphi_6, \varphi_{10}, \varphi_{11}, \varphi_{12}, \varphi_{13}, \varphi_{14}, \varphi_{15} \}$},\\
0 & \text{otherwise}.
\end{cases}
\end{align*}
Here $\varphi_1, \dots, \varphi_{16} : [0,k]^8 \rightarrow [0,k]^8$ is defined by
\begin{align*}
&\varphi_1(a) := \sigma_3(\rho_3(b_1 \wedge b_4)),\\
&\varphi_2(a) := \sigma_3(\rho_3(b_1 \wedge b_7)),\\
&\varphi_3(a) := \sigma_3(\rho_3(b_2 \wedge b_5)),\\
&\varphi_4(a) := \sigma_3(\rho_3(b_2 \wedge b_8)),\\
&\varphi_5(a) := \sigma_3(\rho_3((b_1 \vee b_4) \wedge b_3)),\\
&\varphi_6(a) := \sigma_3(\rho_3((b_2 \vee b_8) \wedge b_6)),\\
&\varphi_7(a) := \sigma_3(\rho_3((b_1 \vee b_4) \wedge (b_2 \vee b_5) \wedge b_3)),\\
&\varphi_8(a) := \sigma_3(\rho_3((b_1 \vee b_7) \wedge (b_2 \vee b_8) \wedge b_6)),\\
&\varphi_9(a) := \sigma_3(\rho_3((b_1 \vee b_2 \vee b_4 \vee b_5) \wedge (b_1 \vee b_2 \vee b_7 \vee b_8))),\\
&\varphi_{10}(a) := \sigma_3(\rho_3((((b_1 \vee b_4) \wedge (b_2 \vee b_5)) \vee b_3) \wedge (b_2 \vee b_6 \vee b_8))),\\
&\varphi_{11}(a) := \sigma_3(\rho_3((((b_1 \vee b_7) \wedge (b_2 \vee b_8)) \vee b_6) \wedge (b_1 \vee b_3 \vee b_4))),\\
&\varphi_{12}(a) := \sigma_3(\rho_3(((b_1 \vee b_4) \wedge (b_2 \vee b_5) \vee b_3) \wedge (((b_1 \vee b_7) \wedge (b_2 \vee b_8)) \vee b_6))),\\
&\varphi_{13}(a) := \sigma_3(\rho_3(((b_1 \vee b_4) \wedge (b_2 \vee b_5)) \vee ((b_1 \vee b_7) \wedge (b_2 \vee b_8)) \vee b_3 \vee b_6)),\\
&\varphi_{14}(a) := \sigma_3(\rho_3((b_1 \vee b_2 \vee b_4 \vee b_5 \vee b_7 \vee b_8) \wedge (((b_1 \vee b_4) \wedge (b_2 \vee b_5)) \vee b_2 \vee b_3 \vee b_6 \vee b_8))),\\
&\varphi_{15}(a) := \sigma_3(\rho_3((b_1 \vee b_2 \vee b_4 \vee b_5 \vee b_7 \vee b_8) \wedge (((b_1 \vee b_7) \wedge (b_2 \vee b_8)) \vee b_1 \vee b_3 \vee b_4 \vee b_6))),\\
&\varphi_{16}(a) := \sigma_3(\rho_3(b_1 \vee b_2 \vee b_3 \vee b_4 \vee b_5 \vee b_6 \vee b_7 \vee b_8))\\
\end{align*}
for $a = (a_1, \dots,a_8) \in [0,k]^8$,
where $b_i := \sigma^{-1}(a_i) \in \{ \sigma_k(x) \mid x \in [0,k] \}$ for $i = 1,\dots,8$.
Indeed, a function $g \in \average{\Gamma_{\rm sub}}^k$ such that $g(v) \geq g(\rho_k(v))$ for $v \in {\rm dom}\ g$ satisfies the following inequalities for all $v_1,v_2, \dots, v_8 \in \{0,1\}^{kn}$;
\begin{align*}
&3g(v_1)+3g(v_4) \geq 3g(v_1 \wedge v_4) + 3g(v_1 \vee v_4),\\
&2g(v_1)+2g(v_7) \geq 2g(v_1 \wedge v_7) + 2g(v_1 \vee v_7),\\
&2g(v_2)+2g(v_5) \geq 2g(v_2 \wedge v_5) + 2g(v_2 \vee v_5),\\
&3g(v_2)+3g(v_8) \geq 3g(v_2 \wedge v_8) + 3g(v_2 \vee v_8),\\
&g(v_1 \vee v_4) + g(v_3) \geq g((v_1 \vee v_4) \wedge v_3) + g(v_1 \vee v_3 \vee v_4),\\
&g(v_2 \vee v_8) + g(v_6) \geq g((v_2 \vee v_8) \wedge v_6) + g(v_2 \vee v_6 \vee v_8),\\
&2g(v_1 \vee v_4) + 2g(v_2 \vee v_5) \geq 2g((v_1 \vee v_4) \wedge (v_2 \vee v_5)) + 2g(v_1 \vee v_2 \vee v_4 \vee v_5),\\
&2g(v_1 \vee v_7) + 2g(v_2 \vee v_8) \geq 2g((v_1 \vee v_7) \wedge (v_2 \vee v_8)) + 2g(v_1 \vee v_2 \vee v_7 \vee v_8),\\
&2g((v_1 \vee v_4) \wedge (v_2 \vee v_5)) + 2g(v_3) \geq 2g((v_1 \vee v_4) \wedge (v_2 \vee v_5) \wedge v_3) + 2g(((v_1 \vee v_4) \wedge (v_2 \vee v_5)) \vee v_3),\\
&2g((v_1 \vee v_7) \wedge (v_2 \vee v_8)) + 2g(v_6) \geq 2g((v_1 \vee v_7) \wedge (v_2 \vee v_8) \wedge v_6) + 2g(((v_1 \vee v_7) \wedge (v_2 \vee v_8)) \vee v_6),\\
&2g(v_1 \vee v_2 \vee v_4 \vee v_5) + 2g(v_1 \vee v_2 \vee v_7 \vee v_8)\\
&\quad \geq 2g((v_1 \vee v_2 \vee v_4 \vee v_5) \wedge (v_1 \vee v_2 \vee v_7 \vee v_8)) + 2g(v_1 \vee v_2 \vee v_4 \vee v_5 \vee v_7 \vee v_8),\\
&g(((v_1 \vee v_4) \wedge (v_2 \vee v_5)) \vee v_3) + g(v_2 \vee v_6 \vee v_8)\\
&\quad \geq g((((v_1 \vee v_4) \wedge (v_2 \vee v_5)) \vee v_3) \wedge (v_2 \vee v_6 \vee v_8)) + g((v_1 \vee v_4) \wedge (v_2 \vee v_5) \vee v_2 \vee v_3 \vee v_6 \vee v_8),\\
&g(((v_1 \vee v_7) \wedge (v_2 \vee v_8)) \vee v_6) + g(v_1 \vee v_3 \vee v_4)\\
&\quad \geq g((((v_1 \vee v_7) \wedge (v_2 \vee v_8)) \vee v_6) \wedge (v_1 \vee v_3 \vee v_4))+ g(((v_1 \vee v_7) \wedge (v_2 \vee v_8)) \vee v_1 \vee v_3 \vee v_4 \vee v_6),\\
&g(((v_1 \vee v_4) \wedge (v_2 \vee v_5)) \vee v_3) + g(((v_1 \vee v_7) \wedge (v_2 \vee v_8)) \vee v_6)\\
&\quad \geq g((((v_1 \vee v_4) \wedge (v_2 \vee v_5)) \vee v_3) \wedge (((v_1 \vee v_7) \wedge (v_2 \vee v_8)) \vee v_6))\\
&\qquad + g(((v_1 \vee v_4) \wedge (v_2 \vee v_5)) \wedge ((v_1 \vee v_7) \wedge (v_2 \vee v_8)) \vee v_3 \vee v_6),\\
&g(v_1 \vee v_2 \vee v_4 \vee v_5 \vee v_7 \vee v_8) + g((v_1 \vee v_4) \wedge (v_2 \vee v_5) \vee v_2 \vee v_3 \vee v_6 \vee v_8)\\
&\quad \geq g((v_1 \vee v_2 \vee v_4 \vee v_5 \vee v_7 \vee v_8) \wedge ((v_1 \vee v_4) \wedge (v_2 \vee v_5) \vee v_2 \vee v_3 \vee v_6 \vee v_8))\\
&\qquad + g(v_1 \vee v_2 \vee v_3 \vee v_4 \vee v_5 \vee v_6 \vee v_7 \vee v_8),\\
&g(v_1 \vee v_2 \vee v_4 \vee v_5 \vee v_7 \vee v_8) + g(((v_1 \vee v_7) \wedge (v_2 \vee v_8)) \vee v_1 \vee v_3 \vee v_4 \vee v_6)\\
&\quad \geq g((v_1 \vee v_2 \vee v_4 \vee v_5 \vee v_7 \vee v_8) \wedge (((v_1 \vee v_7) \wedge (v_2 \vee v_8)) \vee v_1 \vee v_3 \vee v_4 \vee v_6))\\
&\qquad + g(v_1 \vee v_2 \vee v_3 \vee v_4 \vee v_5 \vee v_6 \vee v_7 \vee v_8),\\
&3g(v_1 \wedge v_4) \geq 3g(\rho_3(v_1 \wedge v_4)),\\
&2g(v_1 \wedge v_7) \geq 2g(\rho_3(v_1 \wedge v_7)),\\
&2g(v_2 \wedge v_5) \geq 2g(\rho_3(v_2 \wedge v_5)),\\
&3g(v_2 \wedge v_8) \geq 3g(\rho_3(v_2 \wedge v_8)),\\
&g((v_1 \vee v_4) \wedge v_3) \geq g(\rho_3((v_1 \vee v_4) \wedge v_3)),\\
&g((v_2 \vee v_8) \wedge v_6) \geq g(\rho_3((v_2 \vee v_8) \wedge v_6)),\\
&2g((v_1 \vee v_4) \wedge (v_2 \vee v_5) \wedge v_3) \geq 2g(\rho_3((v_1 \vee v_4) \wedge (v_2 \vee v_5) \wedge v_3)),\\
&2g((v_1 \vee v_7) \wedge (v_2 \vee v_8) \wedge v_6) \geq 2g(\rho_3((v_1 \vee v_7) \wedge (v_2 \vee v_8) \wedge v_6)),\\
&2g((v_1 \vee v_2 \vee v_4 \vee v_5) \wedge (v_1 \vee v_2 \vee v_7 \vee v_8)) \geq 2g(\rho_3((v_1 \vee v_2 \vee v_4 \vee v_5) \wedge (v_1 \vee v_2 \vee v_7 \vee v_8))),\\
&g((((v_1 \vee v_4) \wedge (v_2 \vee v_5)) \vee v_3) \wedge (v_2 \vee v_6 \vee v_8)) \geq g(\rho_3((((v_1 \vee v_4) \wedge (v_2 \vee v_5)) \vee v_3) \wedge (v_2 \vee v_6 \vee v_8))),\\
&g((((v_1 \vee v_7) \wedge (v_2 \vee v_8)) \vee v_6) \wedge (v_1 \vee v_3 \vee v_4)) \geq g(\rho_3((((v_1 \vee v_7) \wedge (v_2 \vee v_8)) \vee v_6) \wedge (v_1 \vee v_3 \vee v_4))),\\
&g((((v_1 \vee v_4) \wedge (v_2 \vee v_5)) \vee v_3) \wedge (((v_1 \vee v_7) \wedge (v_2 \vee v_8)) \vee v_6))\\
&\quad \geq g(\rho_3(((v_1 \vee v_4) \wedge (v_2 \vee v_5) \wedge v_3) \wedge (((v_1 \vee v_7) \wedge (v_2 \vee v_8)) \vee v_6))),\\
&g(((v_1 \vee v_4) \wedge (v_2 \vee v_5)) \wedge ((v_1 \vee v_7) \wedge (v_2 \vee v_8)) \vee v_3 \vee v_6)\\
&\quad \geq g(\rho_3(((v_1 \vee v_4) \wedge (v_2 \vee v_5)) \wedge ((v_1 \vee v_7) \wedge (v_2 \vee v_8)) \vee v_3 \vee v_6)),\\
&g((v_1 \vee v_2 \vee v_4 \vee v_5 \vee v_7 \vee v_8) \wedge ((v_1 \vee v_4) \wedge (v_2 \vee v_5) \vee v_2 \vee v_3 \vee v_6 \vee v_8))\\
&\quad \geq g(\rho_3((v_1 \vee v_2 \vee v_4 \vee v_5 \vee v_7 \vee v_8) \wedge ((v_1 \vee v_4) \wedge (v_2 \vee v_5) \vee v_2 \vee v_3 \vee v_6 \vee v_8))),\\
&g((v_1 \vee v_2 \vee v_4 \vee v_5 \vee v_7 \vee v_8) \wedge (((v_1 \vee v_7) \wedge (v_2 \vee v_8)) \vee v_1 \vee v_3 \vee v_4 \vee v_6))\\
&\quad \geq g(\rho_3((v_1 \vee v_2 \vee v_4 \vee v_5 \vee v_7 \vee v_8) \wedge (((v_1 \vee v_7) \wedge (v_2 \vee v_8)) \vee v_1 \vee v_3 \vee v_4 \vee v_6)),\\
&2g(v_1 \vee v_2 \vee v_3 \vee v_4 \vee v_5 \vee v_6 \vee v_7 \vee v_8) \geq 2g(\rho_3(v_1 \vee v_2 \vee v_3 \vee v_4 \vee v_5 \vee v_6 \vee v_7 \vee v_8)).
\end{align*}
Also $g$ satisfies the inequality given by summing up the all above inequalities.
Hence if it holds that $f \in \average{\Gamma_{\rm sub}}^k_{(\rho_k, \sigma_k)}$,
then $f$ satisfies
\begin{align}\label{eq:ksub}
\sum_{\varphi \in {\rm Pol}^{(8)}(\average{\Gamma_{\rm sub}}^k_{(\rho_k, \sigma_k)})} \omega(\varphi) f(\varphi(x^1, \dots,x^8)) \leq 0
\end{align}
for every $x^1, \dots,x^8 \in \textrm{dom }f$.

Let $x^1,\dots,x^8 \in [0,k]^2$ be defined by
$x^1 := (1, 2)$, $x^2 := (1, 3)$, $x^3 := (2, 1)$, $x^4 := (2, 2)$,
$x^5 := (2, 3)$, $x^6 := (3, 1)$, $x^7 := (3, 2)$, and $x^8 := (3, 3)$.
Since $f(x^i) = 0$ for $i = 1,\dots, 8$, if $\varphi \in \{ e_1^{(8)}, \dots, e_8^{(8)} \}$
then $\omega(\varphi) f(\varphi(x^1, \dots, x^8)) = 0$.
We can see that $f$ does not satisfy (\ref{eq:ksub}) by Table~\ref{tab:ksub}.
Therefore we obtain $f \not\in \average{\Gamma_{\rm sub}}^k_{(\rho_k, \sigma_k)}$.
\begin{table}[htb]
\caption{Calculations of $\omega(\varphi_i) f(\varphi_i(x^1, \dots,x^8))$ for $i = 1, \dots, 16$.}
\begin{center}
\begin{tabular}{c|cccc}
$i$ & $\varphi_i(x^1, \dots, x^8)$ & $\omega(\varphi_i) f(\varphi_i(x^1,\dots,x^8))$ \\ \hline
1 & (0,2) & 0\\
2 & (0,2) & 0\\
3 & (0,3) & 0\\
4 & (0,3) & 0\\
5 & (2,0) & 0\\
6 & (3,0) & 0\\
7 & (2,0) & 0\\
8 & (3,0) & 0\\
9 & (1,0) & 0\\
10 & (1,1) & 1\\
11 & (1,1) & 1\\
12 & (1,1) & 1\\
13 & (0,1) & 0\\
14 & (0,3) & 0\\
15 & (0,2) & 0\\
16 & (0,0) & $-2$\\
\end{tabular}
\end{center}
\label{tab:ksub}
\end{table}

\section*{Acknowledgments}
We thank Hiroshi Hirai and Magnus Wahlstr{\"o}m for careful reading and helpful comments.
We also thank the referees for pointing out a similarity between extended expressive power and pp-interpretation or weighted variety and for information on the papers~\cite{ICALP/KO15, SICOMP/TZ17}.
This research is supported by JSPS Research Fellowship for Young Scientists and by JST ERATO Grant Number JPMJER1305, Japan.


\end{document}